\DeclareSymbolFont{cyrletters}{OT2}{wncyr}{m}{n}
\DeclareMathSymbol{\Sh}{\mathalpha}{cyrletters}{"58}
\DeclareMathOperator{\SL}{SL}
\DeclareMathOperator{\gal}{Gal}
\DeclareMathOperator{\fr}{Fr}
\DeclareMathOperator{\aut}{Aut}
\DeclareMathOperator{\tr}{Tr}
\DeclareMathOperator{\en}{End}
\DeclareMathOperator{\res}{Res}
\DeclareMathOperator{\id}{id}
\DeclareMathOperator{\ord}{ord}
\DeclareMathOperator{\Hom}{Hom}
\DeclareMathOperator{\gr}{Gr}
\newtheorem{theorem}{Theorem}[section]
\newtheorem{lemma}[theorem]{Lemma}
\newtheorem{proposition}[theorem]{Proposition}
\newtheorem{conjecture}[theorem]{Conjecture}
\newtheorem*{theorem*}{Theorem}
\theoremstyle{definition}
\newtheorem{definition}[theorem]{Definition}
\newtheorem{example}[theorem]{Example}
\theoremstyle{remark}
\newtheorem{remark}[theorem]{Remark}
\numberwithin{equation}{section}
\newcommand{\N}{\mathbb{N}}
\newcommand{\Z}{\mathbb{Z}}
\newcommand{\Q}{\mathbb{Q}}
\newcommand{\R}{\mathbb{R}}
\newcommand{\C}{\mathbb{C}}
\newcommand{\F}{\mathbb{F}}
\newcommand{\OQ}{\overline{\Q}}
\newcommand{\GQ}{G_{\Q}}
\newcommand{\p}{\mathfrak{p}}
\newcommand{\s}{{}^\sigma\!}
\newcommand{\n}{{}^\nu\!}
\newcommand{\barr}{\left(\begin{array}}
\newcommand{\narr}{\end{array}\right)}
\newcommand{\sub}{\subseteq}
\begin{document}

\title{On $L$-functions of quadratic $\Q$-curves}

\author[P.\thinspace J. Bruin]{Peter Bruin}
\address[1]{Mathematisch Instituut\\
Universiteit Leiden\\
Niels Bohrweg 1\\
2333 CA Leiden, Netherlands\\
}
\email{P.J.Bruin@math.leidenuniv.nl}
\thanks{The first author was partially supported by the Swiss National Science Foundation through grants 124737 and 137928, and by the Netherlands Organisation for Scientific Research (NWO) through Veni grant 639.031.346. The second author was partially supported by Swiss National Science Foundation grant 168459.}

\author[A. Ferraguti]{Andrea Ferraguti}
\address[2]{Institut f\"{u}r Mathematik\\
Universit\"{a}t Z\"{u}rich\\
Winterthurerstrasse 190\\
8057 Z\"{u}rich, Switzerland\\
}
\address[3]{University of Cambridge\\
DPMMS\\
Centre for Mathematical Sciences\\
Wilberforce Road, Cambridge, CB3 0WB, UK\\}
\email{andrea.ferraguti@math.uzh.ch}

\subjclass[2010]{Primary 11G05, 11G40, 11F30}
\keywords{Number fields; $\Q$-curves; $L$-functions; newforms; BSD}

\begin{abstract}
Let $K$ be a quadratic number field and let $E$ be a $\Q$-curve without CM completely defined over $K$ and not isogenous to an elliptic curve over $\Q$. In this setting, it is known that there exists a weight $2$ newform of suitable level and character, such that $L(E,s)=L(f,s)L(\s f,s)$, where $\s f$ is the unique Galois conjugate of $f$. In this paper, we first describe an algorithm to compute the level, the character and the Fourier coefficients of $f$. Next, we show that given an invariant differential $\omega_E$ on $E$, there exists a positive integer $Q=Q(E,\omega_E)$ such that $L(E,1)/P(E/K)\cdot Q$ is an integer, where $P(E/K)$ is the period of $E$. Assuming a generalization of Manin's conjecture, the integer $Q$ is made effective. As an application, we verify the weak BSD conjecture for some curves of rank two, we compute the $L$-ratio of a curve of rank zero and we produce relevant examples of newforms of large level.
\end{abstract}

\maketitle
	\section{Introduction}
	Let $E$ be an elliptic curve defined over a number field $K$ and let $L(E,s)$ be its $L$-function. This is a holomorphic function defined on the half plane $\{s\in \C\colon \Re(s)>3/2\}$. For a certain class of elliptic curves, and conjecturally for every elliptic curve, $L(E,s)$ has an analytic continuation to $\C$. In these cases, it is of deep interest to know the order of vanishing of $L(E,s)$ in $s=1$, which is called the \emph{analytic rank} of $E$. The main reason of interest is certainly the Birch and Swinnerton-Dyer conjecture, which, in its weak form, asserts that the analytic rank and the algebraic rank of $E$ coincide. The conjecture is known to be true over $\Q$ when the analytic rank is at most $1$ (see \cite{groza}, \cite{kol} and \cite{mur} or the survey in \cite{gro}), but very little is known in the general case; moreover it is extremely difficult even to verify the BSD conjecture for a given $E$.

	Suppose now that $K$ is a quadratic number field of discriminant $\Delta_K$ and that $E$ is a $\Q$-curve completely defined over $K$ (i.e.\ such that $E$ is $K$-isogenous to its Galois conjugate). This is a sufficient condition to ensure that $L(E,s)$ can be analytically continued to $\C$. In the present paper we address the following problem: how can we decide whether $L(E,1)$ vanishes or not? Computations with modular symbols can in principle answer the question, but they are inefficient when the conductor of $E$ is large. Alternatively, one can compute $L(E,1)$ to any given precision; however, it is not a priori clear how to decide whether $L(E,1)$ is exactly $0$ or just a very small non-zero number. The same type of problem arises when $L(E,1)\neq 0$: let $P(E/K)$ be the period of $E$ (cf.\ subsection \ref{BSD}). This coincides with the product of the Tamagawa numbers of $E$ with $2^s\cdot\Omega_E/\sqrt{|\Delta_K|}$. Here $s=0$ if $K$ is complex and $s\in\{0,1,2\}$ if $K$ is real (depending on the $2$-torsion of $E$), while $\Omega_E$ is the product of the real periods of $E$ when $K$ is real and the covolume of the period lattice when $K$ is imaginary (see section \ref{mainsec} for the precise definition); this can be computed efficiently (see for example \cite{creto}). Suppose that we can compute the $L$-ratio $L(E,1)\cdot \sqrt{|\Delta_K|}/\Omega_E$ to any given precision, finding a value which is very close to a rational number $t$. How can we \emph{prove} that the $L$-ratio is exactly $t$?

	Our starting point, in section \ref{curvesoverQ}, will be elliptic curves over $\Q$. As stated by the celebrated modularity theorem (see \cite{wil}, \cite{tawil} and \cite{bcdt} for the original proof, or \cite{dis} for a survey), these curves admit a non-trivial map $X_0(N)\to E$, called a \emph{modular parametrization}, where $N$ is the conductor of $E$ and $X_0(N)$ is the compact modular curve for $\Gamma_0(N)$.  A consequence of this fact is that if $\pi\colon X_0(N)\to E$ is a modular parametrization and $\omega_E$ is a N\'{e}ron differential on $E$, then $\pi^*(\omega_E)=c\cdot f$, where $c=c(E,\pi)$ is a non-zero integer (defined up to sign) called the \emph{Manin constant} and $f\in S_2(\Gamma_0(N))$ is a newform. The $L$-function attached to $f$ coincides with the $L$-function of $E$ and one can see that $L(f,1)=-2\pi i\int_{0}^{i\infty}f(t)dt$ using the formula for the analytic continuation of $L(f,s)$. Now a theorem of Manin and Drinfel'd (see \cite{drin} and \cite{man1}) shows that $\pi(0)-\pi(i\infty)$ has finite order in $E(\Q)$ and this allows us to relate $L(f,1)$ to the real period $\Omega_E$ of $E$ and $c$. In general it is a very hard problem to compute $c$, but assuming Manin's conjecture it is possible to find an explicit multiple of $c$ in terms of the $\Q$-isogeny class of $E$. Eventually, this will permit us to find an effective positive integer $Q=Q(E,\omega_E)$ such that $L(E,1)\cdot Q/\Omega_E$ is a non-zero integer whenever $L(E,1)\neq 0$. This gives us a very efficient method to decide whether $L(E,1)=0$. In fact this happens if and only if computing $L(E,1)$ up to a sufficient precision it results that $|L(E,1)|<\Omega_E/Q$. Moreover, the same result allows us to compute the rational number $L(E,1)/\Omega_E$ whenever this is different from $0$.

	Over $\OQ$ the situation is more complicated. Although there are different uses for the word ``modular'' in the literature, throughout this paper we call an elliptic curve $E$ over $\OQ$ \emph{modular} if there exists some $M \in \N$ such that there is a non-constant map $X_1(M)\to	E$. Ribet showed in \cite{rib1} that Serre's conjecture on $\bmod$ $p$ Galois representations (which was later proved in \cite{kwin} and \cite{kwin2}) implies that modular elliptic curves without complex multiplication (CM) are exactly $\Q$-curves without CM, that is, elliptic curves without CM which are isogenous to all of their Galois conjugates. In contrast to the case of elliptic curves over $\Q$, the $L$-function	of a $\Q$-curve does not necessarily coincide with the $L$-function attached to a
	newform. However, within the class of $\Q$-curves it is possible to define the subclass of the so-called \emph{strongly modular curves}, which are characterized by the property that $L(E, s)$ is a product of $L$-functions of newforms (see \cite{guique}).

	After reviewing in sections \ref{strmod} and \ref{buiblo} some basic constructions associated to $\Q$-curves taken from \cite{gola}, \cite{guique}, \cite{que} and \cite{rib1}, in section \ref{quacur} we will focus our attention on quadratic $\Q$-curves completely defined over a quadratic number field $K$. All such curves are strongly modular and Ribet's theorem ensures the existence of a newform $f\in S_2(\Gamma_1(N))$ such that $L(E,s)=L(f,s)L(\s f,s)$, where $\s f$ is the unique Galois conjugate of $f$. Section \ref{newform} is dedicated to explaining how one can compute the Fourier coefficients of $f$ given $E$ and an isogeny from $E$ to its Galois conjugate $\n E$.

	The existence of the newform $f$ follows from the fact that the Weil restriction of scalars of $E$, which is an abelian surface over $\Q$, is $\Q$-isogenous to the abelian variety $A_f$ attached to $f$ by Shimura (see \cite{shi}). This is the key fact that will allow us to generalize the ``geometric'' argument used for elliptic curves over $\Q$ to the case of quadratic $\Q$-curves. Section \ref{mainsect} contains the core of our argument. We will show there how to choose an appropriate parametrization starting from the data of $E$, an invariant differential $\omega_E$ and an isogeny $\mu\colon E\to \n E$, and how to apply the Manin-Drinfel'd theorem to this setting in order to again relate $L(E,1)$ to the period of $E$ and the discriminant of $K$.

	One fundamental difference with the case of elliptic curves over $\Q$ is the fact that there is no direct way to uniquely define a Manin constant. In fact if $\mathcal E$ is a N\'{e}ron model for $E$ over the ring of integers $\mathcal O_K$ of $K$, then $H^0(\mathcal E,\Omega^1_{\mathcal E/\mathcal O_K})$ is a locally free $\mathcal O_K$-module of rank $1$, but it is not necessarily free. However, the pullback of $\omega_E$ under our modular parametrization coincides with $\gamma\cdot h$, for certain $\gamma\in K^*$ and $h\in \langle f,\s f\rangle_{\C}$. In section \ref{manid} we will recall, following \cite{gola}, the definition of the so-called \emph{Manin ideal}, an invariant attached to a modular parametrization $X_1(N)\to E$. Assuming a generalization of Manin's conjecture we will be able to use properties of the Manin ideal to find an explicit rational number whose quotient by $N_{K/\Q}(\gamma)$ is an integer; this, together with a small computation performed in section \ref{finbnd}, will finally allow us to compute an effective positive integer $Q=Q(E,\omega_E)$ such that $\displaystyle L(E,1)\cdot \frac{\sqrt{|\Delta_K|}}{\Omega_E}\cdot Q$ is an integer.

	The main result, stated in section \ref{mainsec}, can be summarized as follows: 
		\begin{theorem*}
			Let $K$ be a quadratic number field with discriminant $\Delta_K$ and let $E$ be a quadratic $\Q$-curve completely defined over $K$. For every finite place $v$ of $K$, let $c_v$ be the Tamagawa number of $E$ at $v$. Let $\displaystyle P(E/K)=\prod_vc_v\cdot 2^s\cdot \frac{\Omega_E}{\sqrt{|\Delta_K|}}$ be the period of $E$. Suppose that $L(E,1)\neq 0$. Then:
			$$L(E,1)\cdot \frac{\sqrt{|\Delta_K|}}{\Omega_E}\in \Q^*.$$
			Moreover, assuming the generalized Manin conjecture, if we fix an invariant differential $\omega_E$ then there exists an effective positive integer $Q=Q(E,\omega_E)$ such that $\displaystyle L(E,1)\cdot \frac{\sqrt{|\Delta_K|}}{\Omega_E}\cdot Q$ is an integer.
	\end{theorem*}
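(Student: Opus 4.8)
The plan is to transfer to this setting the ``geometric'' argument used in section~\ref{curvesoverQ} for elliptic curves over $\Q$: the first assertion will come from applying the Manin--Drinfel'd theorem to a well-chosen modular parametrization of $E$, while the effective version will follow by bounding the one non-effective ingredient of that argument --- an element $\gamma\in K^*$ playing the role of the Manin constant --- by means of the Manin ideal and the generalized Manin conjecture.

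The first task is the construction of the parametrization. Let $f\in S_2(\Gamma_1(N))$ be the newform with $L(E,s)=L(f,s)L(\s f,s)$ recalled above and let $q_f\colon J_1(N)\to A_f$ be Shimura's quotient. Since $\res_{K/\Q}(E)$ is $\Q$-isogenous to $A_f$, we fix such an isogeny $\psi$, compose the Abel--Jacobi map $X_1(N)\to J_1(N)$ based at the cusp $\infty$ with $\psi\circ q_f$, base change to $K$, and project $\bigl(\res_{K/\Q}(E)\bigr)_K\cong E\times\n E$ onto $E$; this produces a non-constant morphism $\pi\colon X_1(N)_K\to E$ over $K$, and one can arrange it to be compatible with the given isogeny $\mu\colon E\to\n E$. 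Tracking $\omega_E$ through $\psi$ and through the comparison isomorphism identifying $H^0(A_f,\Omega^1)\otimes\C$ with $\langle f,\s f\rangle_\C$, one obtains $\pi^*\omega_E=\gamma\cdot h$ for some $\gamma\in K^*$ and $h\in\langle f,\s f\rangle_\C$.

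Next I would invoke the Manin--Drinfel'd theorem: the divisor class of $(0)-(\infty)$ is torsion in $J_1(N)(\Q)$, so $\pi\bigl((0)-(\infty)\bigr)$ is torsion in $E(\overline{K})$; let $n$ be a positive integer killing it. Integrating $\pi^*\omega_E$ along a path from $\infty$ to $0$ and using $L(f,1)=-2\pi i\int_0^{i\infty}f(t)\,dt$, one finds that $n\gamma\cdot L(h,1)$ lies in the period lattice of $E$, where $L(h,1)$ is the corresponding $\C$-linear combination of $L(f,1)$ and $L(\s f,1)$. Combining this relation with its $\nu$-conjugate and with the isogeny $\mu$, and forming a suitable determinant via the action of the coefficient field $F$ of $f$ on $\res_{K/\Q}(E)$, one obtains that $N_{K/\Q}(\gamma)\cdot L(E,1)$ is, up to an explicit nonzero rational factor, an integer multiple of the archimedean period of $\res_{K/\Q}(E)$ (its real period if $K$ is real, the covolume of its period lattice if $K$ is imaginary). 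By Milne's comparison of the N\'eron model and the invariant differentials of a Weil restriction, that period equals $\Omega_E/\sqrt{|\Delta_K|}$ up to an explicit power of $2$ --- the power $2^s$ occurring in $P(E/K)$ --- and a rational factor. Hence $L(E,1)\cdot\sqrt{|\Delta_K|}/\Omega_E\in\Q$, and it is nonzero precisely because $L(E,1)\neq 0$; this is the first assertion.

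For the effective statement, one checks that in this chain the integer $n$ (bounded via the cuspidal subgroup of $J_1(N)$), the power of $2$, and all the remaining rational factors are effectively computable from $E$, $\omega_E$, $\mu$ and $\Delta_K$ --- this is the content of the short computation of section~\ref{finbnd}. The only exception is $N_{K/\Q}(\gamma)$, which depends on the choice of parametrization; this is precisely where section~\ref{manid} enters, in which $\gamma$ is, up to normalization, a generator of the Manin ideal of $\pi$, and the generalized Manin conjecture bounds this ideal --- hence forces $N_{K/\Q}(\gamma)$ to divide an explicit integer --- in terms of the $\Q$-isogeny class of $\res_{K/\Q}(E)$. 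Assembling these bounds produces the effective $Q=Q(E,\omega_E)$ with $L(E,1)\cdot\sqrt{|\Delta_K|}\cdot Q/\Omega_E\in\Z$. I expect the main obstacle to be the period bookkeeping of the third step: pinning down the exact power of $2$ (which in the real case depends on the $2$-torsion of $E$) and every rational factor in the comparison between the archimedean period of $\res_{K/\Q}(E)$ and $\Omega_E/\sqrt{|\Delta_K|}$, so that none of these is lost to non-effectivity; and, for the effective part, the control of the Manin ideal, which is exactly what needs the generalized Manin conjecture.
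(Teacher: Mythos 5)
Your overall strategy is the paper's: build a modular parametrization $\pi\colon X_1(N)\to E$ over $K$ through $A_f$, write $\pi^*\omega_E=\gamma\cdot h$ with $h\in\langle f,\s f\rangle_\C$, apply Manin--Drinfel'd, and control $N_{K/\Q}(\gamma)$ via the Manin ideal under Conjecture \ref{genmancon}. But there is a genuine gap at the step where you ``form a suitable determinant'' and assert that $N_{K/\Q}(\gamma)\cdot L(E,1)$ is a rational multiple of the archimedean period. When the relation coming from $\pi$ is combined with its $\nu$-conjugate, the two contributions carry the factors $1/\gamma^2$ and $1/\n\gamma^2$ separately, so a priori the combination lies in $K\cdot\omega_1\omega_{1,\nu}$ rather than in $\Q\cdot\omega_1\omega_{1,\nu}$, and it involves $\gamma^2$ rather than $N_{K/\Q}(\gamma)=\gamma\,\n\gamma$, which is the quantity the Manin ideal actually controls. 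The paper closes this with Lemma \ref{gammarational} ($\n\gamma/\gamma=\pm1$, hence $\gamma^2=\pm N_{K/\Q}(\gamma)$), whose proof is not formal: it compares the given isogeny $\mu$ with the isogeny $E\to\n E$ induced by a Hecke operator $T_p$ at an inert prime $p$, and it depends on normalizing the splitting map $\beta$ by the element $\alpha$ satisfying $\mu^*\omega_{\n E}=\alpha\cdot\omega_E$, $N_{K/\Q}(\alpha)=m$. Your phrase ``one can arrange it to be compatible with $\mu$'' hides exactly this normalization and this lemma; without them neither the rationality in the first assertion nor the appearance of $N_{K/\Q}(\gamma)$ is established.

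A secondary issue concerns the torsion bound and the real-period bookkeeping. Manin--Drinfel'd (Theorem \ref{mandri}) gives that $\pi(0)-\pi(i\infty)$ is torsion, but to kill it with the cheaply computable integer $|E(K)_{\text{tors}}|$ the paper must first prove $\pi(0),\pi(i\infty)\in E(K)$; this is Lemma \ref{cuspdef}, which uses Stevens' description of the Galois action on cusps via the intermediate curve $X(\Gamma_H)$. Your alternative of bounding $n$ by the order of the cuspidal subgroup of $J_1(N)$ is effective in principle but reintroduces a computation at the scale of the modular curve, which the whole construction is designed to avoid (the examples have $N$ of order $10^8$). In the real case one also needs that the class $t\cdot\pi_*\{0,i\infty\}$ is fixed by complex conjugation, so that its period is an integer multiple of the real period $\omega_1$ and not an arbitrary lattice element. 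Apart from these points, your packaging via the period of $\res_{K/\Q}(E)$, instead of the paper's direct comparison of the lattices of $(E,\omega_E)$ and $(\n E,\omega_{\n E})$ through $\alpha$, is a legitimate variant, provided the rational factors in that comparison are made explicit.
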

	In section \ref{exs} we will produce, starting from quadratic $\Q$-curves of algebraic rank two, relevant examples of newforms of large level which cannot feasibly be computed using modular symbols. Finally, we will show how to use our result to prove that the analytic rank of these curves is exactly two and how the theorem can be used to compute the $L$-ratio when the analytic rank of the curve is zero.

        \subsection*{Acknowledgments}
        We would like to thank John Cremona for several helpful
        comments.
%    Text of article.
	\section*{Notation and conventions}
	For algebraic varieties $A,B$ defined over a field $K$, when we talk about maps $\varphi\colon A\to B$ we always mean, unless specified otherwise, that $\varphi$ is also defined over $K$. If in addition $A$ and $B$ are abelian varieties, $\Hom(A,B)$ is the set of isogenies $A\to B$ defined over $K$, while if $F$ is an extension of $K$, the set of isogenies defined over $F$ is denoted by $\Hom_F(A,B)$. In particular, when $A=B$, the $\Q$-algebra of the $F$-endomorphisms of $A$ is denoted by $\en_F^0(A)$. The dual of an isogeny $\varphi$ is denoted by $\widehat{\varphi}$. The base-change of $A$ to $F$ is denoted by $A_F$.

	The word ``newform'' means ``normalized newform'', so if $\displaystyle \sum_{n=1}^{+\infty} a_nq^n$ is the Fourier expansion of a newform then $a_1=1$.

        For a number field $K$, the absolute Galois group of $K$ is
        denoted by $G_K$. All our number fields are contained in a
        fixed algebraic closure $\overline{\Q}$ of $\Q$.
        
	If $F/K$ is a Galois extension of fields and $A$ is a $\gal(F/K)$-module, we denote by $H^i(F/K,A)$ the $i$-th cohomology group of $A$ with coefficients in $\gal(F/K)$. Group actions will be denoted on the upper left corner. For example if $\sigma \in \gal(F/K)$ and $a\in A$ then $\s a$ is the element $a$ acted on by $\sigma$.

\section{Elliptic curves over $\Q$}\label{curvesoverQ}
		Let $E$ be an elliptic curve over $\Q$ with conductor $N$. In this section, we are going to recall how it is possible to relate the special value $L(E,1)$ to the period of $E$, exploiting the modularity theorem. For a reference on this topic, see for example \cite{darmon}. This will serve us as a model for the more general situation that we will study in the subsequent sections of the paper.

		A \emph{modular parametrization} is a non-constant map of algebraic curves $\pi \colon X_0(M) \to E$ for some $M\in \N$. By the modularity theorem (see \cite{bcdt},\cite{wil} and \cite{tawil}), such a map always exists with $M=N$. If $\omega_E$ is a N\'{e}ron differential on $E$, the multiplicity one principle shows that the pullback $\pi^*(\omega_E)$ is a multiple of a newform $f\in S_2(\Gamma_0(N))$ by a constant $c\in \Q^*$, called the \emph{Manin constant}. Note that choosing $\omega_E$ is equivalent to choosing the sign of $c$.
		\begin{theorem}[Edixhoven, \cite{edi}]\label{manconisint}
				The Manin constant is an integer.
		\end{theorem}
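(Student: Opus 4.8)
The plan is to argue prime by prime: since $c\in\Q^{*}$, it suffices to show $v_{p}(c)\geq 0$ for every prime $p$, and for this one compares the integral structure on holomorphic differentials coming from a N\'{e}ron model of $E$ with the one coming from an arithmetic model of $X_{0}(N)$. Fix $p$ and work over $\Z_{p}$. Let $\mathcal{E}$ be the N\'{e}ron model of $E$ and $\mathcal{J}$ that of $J_{0}(N)$, and let $\mathrm{Cot}(\mathcal{E})$ and $\mathrm{Cot}(\mathcal{J})$ denote their modules of invariant differentials, free $\Z_{p}$-modules of ranks $1$ and $g$, the genus of $X_{0}(N)$. Being a N\'{e}ron differential, $\omega_{E}$ is a $\Z_{p}$-basis of $\mathrm{Cot}(\mathcal{E})$. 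The modular parametrization factors as $X_{0}(N)\hookrightarrow J_{0}(N)\xrightarrow{\varphi}E$, where the first map $\iota$ is the Albanese embedding based at the cusp $\infty$ and $\varphi$ is a surjective homomorphism of abelian varieties; by the N\'{e}ron mapping property $\varphi$ extends to a homomorphism $\mathcal{J}\to\mathcal{E}$, so $\omega_{E}$ pulls back to $\varphi^{*}\omega_{E}\in\mathrm{Cot}(\mathcal{J})$, and on generic fibres $\iota^{*}\varphi^{*}\omega_{E}=\pi^{*}\omega_{E}=c\cdot f$, where the newform $f=\sum_{n\geq 1}a_{n}q^{n}\in S_{2}(\Gamma_{0}(N))$ is viewed as the differential $f\,\frac{dq}{q}$ on $X_{0}(N)$.

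The next step is to identify $\mathrm{Cot}(\mathcal{J})$ with sections of the relative dualizing sheaf of a good model of $X_{0}(N)$. Let $\mathcal{X}/\Z_{p}$ be the smooth model if $p\nmid N$, and the minimal regular model (studied by Deligne--Rapoport, Katz--Mazur and Edixhoven) if $p\mid N$; then $\mathcal{X}$ is regular, proper and flat with geometrically connected fibres, and the cusp $\infty$ extends to a section through its smooth locus. Write $\omega_{\mathcal{X}/\Z_{p}}$ for the relative dualizing sheaf, a line bundle agreeing with $\Omega^{1}_{\mathcal{X}/\Z_{p}}$ on the smooth locus. Using Raynaud's identification of the identity component of $\mathcal{J}$ with $\mathrm{Pic}^{0}_{\mathcal{X}/\Z_{p}}$ together with Grothendieck--Serre duality, one obtains a canonical isomorphism $\mathrm{Cot}(\mathcal{J})\cong H^{0}(\mathcal{X},\omega_{\mathcal{X}/\Z_{p}})$, compatible on generic fibres with the classical isomorphism $\iota^{*}\colon H^{0}(J_{0}(N),\Omega^{1})\xrightarrow{\ \sim\ }H^{0}(X_{0}(N),\Omega^{1})$. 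Under it, $\varphi^{*}\omega_{E}$ becomes a global section of $\omega_{\mathcal{X}/\Z_{p}}$.

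Finally, one reads off $c$ from the $q$-expansion at the cusp $\infty$. The completed local ring of $\mathcal{X}$ along the section $\infty$ is $\Z_{p}[[q]]$ (via the Tate curve), and since $\infty$ lies in the smooth locus, the completed stalk of $\omega_{\mathcal{X}/\Z_{p}}$ there is the free $\Z_{p}[[q]]$-module generated by $dq$; hence the $q$-expansion at $\infty$ of the global section $\varphi^{*}\omega_{E}$ has all coefficients in $\Z_{p}$. On the other hand, computed on the generic fibre this $q$-expansion equals that of $c\cdot f$, namely $c\cdot(a_{1}+a_{2}q+\cdots)\,dq$ with $a_{i}\in\Z$ and $a_{1}=1$. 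Comparing constant terms yields $c\in\Z_{p}$, and since $p$ was arbitrary, $c\in\Z$.

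The main obstacle is the second step at primes $p\mid N$. When $p\nmid N$ the model $\mathcal{X}$ is smooth over $\Z_{p}$, $\mathcal{J}=\mathrm{Pic}^{0}_{\mathcal{X}/\Z_{p}}$ is an abelian scheme, and the argument above is essentially immediate; but for $p\mid N$, and especially when $p^{2}\mid N$, the curve $X_{0}(N)$ is no longer semistable, so one must construct and analyze its minimal regular model over $\Z_{p}$, understand the (possibly reducible) special fibre together with the exceptional components created by resolution of singularities, and keep track of the relative dualizing sheaf and of the $q$-expansion map there; this is the technical heart of Edixhoven's work. The residue characteristics $p=2$ and $p=3$ demand additional care, because of the extra automorphisms of elliptic curves with $j\in\{0,1728\}$ and of wild ramification in the relevant covers.
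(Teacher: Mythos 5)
The paper states this result purely as a citation to Edixhoven and gives no proof of its own, so there is nothing internal to compare against; your sketch is a faithful outline of Edixhoven's actual argument. You correctly isolate the essential ingredients --- the N\'eron mapping property making $\varphi^*\omega_E$ an integral differential on the N\'eron model of $J_0(N)$, the identification of such differentials with sections of the relative dualizing sheaf on a regular model of $X_0(N)$, and the $q$-expansion principle at the cusp $\infty$ --- and you correctly locate the genuine technical difficulty at the non-semistable primes with $p^2\mid N$ and at $p=2,3$.
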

		Let now $E'$ be another elliptic curve over $\Q$ of conductor $N$, and let $\pi\colon X_0(N)\to E$ and $\pi'\colon X_0(N)\to E'$ be two modular parametrizations. We say that $\pi'$ 
		\emph{dominates} $\pi$ if there exists a $\Q$-isogeny $\psi\colon E'\to E$ such that $\psi\circ \pi'=\pi$. This relation defines a partial ordering on the set of isomorphism classes of pairs $(\pi',E')$, where $E'$ is an elliptic curve $\Q$-isogenous to $E$ and $\pi'\colon X_0(N)\to E'$ is a modular parametrization. We write $(\pi',E')\geq (\pi,E)$ if $\pi'$ dominates $\pi$. The map $\pi$ is called a \emph{strong modular parametrization} if it is a maximal element with respect to this ordering. It is clear that a strong parametrization is unique up to isomorphism; moreover it can be shown that every modular parametrization factors through a strong one. The Manin conjecture for elliptic curves over $\Q$ can be stated as follows:
			\begin{conjecture}[Manin conjecture]\label{mancon}
				The Manin constant of a strong parametrization is $\pm 1$.
		\end{conjecture}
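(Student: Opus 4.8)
A full proof of Conjecture \ref{mancon} is not known; the natural plan of attack would run as follows. Write $\pi\colon X_0(N)\to E$ for the strong parametrization, $J=J_0(N)$ for the Jacobian, and $\phi\colon J\to E$ for the induced surjection; by maximality of $\pi$ the kernel of $\phi$ is connected, that is, $E$ is the optimal quotient of $J$ attached to $f$. Let $\mathcal E$ and $\mathcal J$ be the N\'eron models of $E$ and $J$ over $\Z$; the N\'eron mapping property extends $\phi$ to $\mathcal J\to\mathcal E$, and pullback of invariant differentials along the zero sections gives a $\Z$-linear map $\phi^*\colon H^0(\mathcal E,\Omega^1_{\mathcal E/\Z})\to H^0(\mathcal J,\Omega^1_{\mathcal J/\Z})$ between free $\Z$-modules of ranks $1$ and $\dim J$. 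The first step is a reformulation: by Theorem \ref{manconisint} the differential $\omega_f$ attached to $f$ lies in $H^0(\mathcal J,\Omega^1_{\mathcal J/\Z})$, and inspecting its $q$-expansion $(1+a_2q+\dots)\,dq$ at the cusp $\infty$, which is a $\Z$-point of the relevant model of $X_0(N)$, shows that $\omega_f$ is a primitive vector there. Therefore $c=\pm1$ is equivalent to the image of $\phi^*$ being a saturated sublattice, and this holds if and only if the reduction $\phi^*\otimes\F_p$ is injective for every prime $p$, equivalently if and only if $\mathrm{Lie}(\phi_{\F_p})\colon\mathrm{Lie}(\mathcal J_{\F_p})\to\mathrm{Lie}(\mathcal E_{\F_p})$ is surjective for all $p$ — a statement about the special fibres.

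The second step is to verify this prime by prime, the cases splitting according to the reduction of $X_0(N)$. When $p\nmid N$ the modular curve has good reduction, so $\mathcal J_{\F_p}$ is the Jacobian of $X_0(N)_{\F_p}$ and $H^0(\mathcal J_{\F_p},\Omega^1)=H^0(X_0(N)_{\F_p},\Omega^1)$ carries an action of the Hecke algebra $\mathbb T$ governed by the Eichler--Shimura congruence, which expresses $T_p$ through Frobenius and Verschiebung in characteristic $p$; a multiplicity-one statement for this mod-$p$ Hecke module then shows that $f$ still spans a one-dimensional subspace of the cotangent space in characteristic $p$, forcing $\phi^*\otimes\F_p$ to be injective. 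When $p\mid N$ one works instead with an explicit regular model of $X_0(N)$ over $\Z_p$ — the Deligne--Rapoport model when $p\,\|\,N$, whose special fibre is two copies of $X_0(N/p)_{\F_p}$ glued along the supersingular points, and the Katz--Mazur models when $p^2\mid N$ — decomposes the connected component $\mathcal J^0_{\F_p}$ into its toric, abelian and unipotent parts, analyses the component group $\mathcal J_{\F_p}/\mathcal J^0_{\F_p}$, and uses that $f$ is $p$-new together with the type of bad reduction of $E$ at $p$ to conclude that $\phi_{\F_p}$ is surjective on Lie algebras. Combining the two cases would give $c=\pm1$.

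I expect the prime $p=2$ (and, to a lesser extent, $p=3$) to be the main obstacle. In characteristic $2$ the reduction of $X_0(N)$ is genuinely pathological, the comparison between $H^0(\mathcal J,\Omega^1)$ and regular differentials on a well-behaved model of the curve is delicate, the coarse/fine moduli distinction intervenes, and Raynaud's classification of finite flat group schemes of order $p$ — the standard tool for controlling kernels of reduction maps — applies only when the absolute ramification index is below $p-1$, hence never at $p=2$. It is precisely for these reasons that Conjecture \ref{mancon} is at present known only in the semistable case and, away from the prime $2$, in general, building on work of Mazur, Raynaud, Abbes--Ullmo, Edixhoven and \v{C}esnavi\v{c}ius; the same obstacles would have to be confronted in any proof of the generalized form of the conjecture invoked later in this paper.
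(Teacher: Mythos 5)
This statement is Conjecture \ref{mancon}, which the paper itself presents without proof, merely pointing to partial results of Mazur, Edixhoven, Abbes--Ullmo and Agashe--Ribet--Stein; you correctly identify it as open, and your sketch of the standard reduction to surjectivity of the map on Lie algebras of N\'eron models prime by prime, together with the known partial cases (semistable reduction, odd primes) and the genuine difficulties at $p=2$, is consistent with the state of the art the paper relies on. No proof is expected here, so nothing further is required.
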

		Results in this direction are presented in \cite{abul}, \cite{ars}, \cite{edi} and \cite{maz}.
		Throughout this section, let us fix a modular parametrization $\pi\colon X_0(N)\to E$. Let $\displaystyle f(z)=\sum_{n=1}^{+\infty}a_ne^{2\pi iz}\in S_2(\Gamma_0(N))$ be the newform attached to $E$ by the modularity theorem. One of the consequences of the theorem is that the $L$-function of $E$ coincides with the $L$-function of $f$. The formula for the analytic continuation of the $L$-function of $f$ shows us that
  		$$L(f,s)=\frac{(2\pi)^s}{\Gamma(s)}\int_0^{\infty}f(it)t^s\frac{dt}{t},$$
  		and therefore we have
 			\begin{equation}\label{Lvalue}
  				L(E,1)=L(f,1)=-2\pi i\int_0^{i\infty}f(z)dz.
 		\end{equation}
		Since $\pi^*(\omega_E)=c\cdot f$ for some $c\in \Z$, one has that
			\begin{equation}\label{Lvalue2}
				c\cdot L(f,1)=c\cdot \int_{\{0,i\infty\}}\frac{f(q)}{q}dq=\int_{\pi_*(\{0,i\infty\})}\omega_E,
		\end{equation}
		where $\{0,i\infty\}$ denotes the image in $H_1(X_0(N),\R)$ of any path from $0$ to $i\infty$ in the compactified upper half plane $\mathcal H^*$ and $\pi_*$ denotes the 
		induced map $H_1(X_0(N),\R)\to H_1(E,\R)$.
		Note that $\pi$ maps points in $\mathcal H^*$ lying on the imaginary axis to real points of $E$, because complex 
		conjugation on $X_0(N)$ corresponds to reflection with respect to the imaginary axis in $\mathcal H^*$, and $\pi$ commutes with 
		complex conjugation since it is defined over $\Q$. Moreover, the cusps $0$ and $i\infty$ of $\Gamma_0(N)$ are defined over $\Q$ and therefore $\pi(0),\pi(i\infty)\in 
		E(\Q)$, but not necessarily $\pi(0)=\pi(i\infty)$. However, we have the following result.
			\begin{theorem}[Manin--Drinfel'd, \cite{drin} and \cite{man1}]\label{mandri}
				Let $G$ be a congruence subgroup of $\SL_2(\Z)$ and let $X_G$ be the corresponding modular curve. If $\alpha,\beta$ are 
				two cusps for $G$, then the class $\{\alpha,\beta\}\in H_1(X_G,\R)$ belongs to $H_1(X_G,\Q)$.
		\end{theorem}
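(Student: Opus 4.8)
The plan is to reformulate the statement in terms of the Jacobian $J_G=\mathrm{Jac}(X_G)$ and then to run Drinfel'd's Hecke‑operator argument. Under the period pairing one identifies $H_1(X_G,\R)$ with $\Hom_\C\bigl(H^0(X_G,\Omega^1),\C\bigr)$, inside which $H_1(X_G,\Z)$ sits as the period lattice, so that $J_G=\Hom_\C\bigl(H^0(X_G,\Omega^1),\C\bigr)/H_1(X_G,\Z)$. Weight‑$2$ cusp forms for $G$, regarded as differentials $f(z)\,dz$, extend holomorphically over the cusps, so $\int_\alpha^\beta\omega$ converges for every $\omega\in H^0(X_G,\Omega^1)$ and defines exactly the functional $\{\alpha,\beta\}\in H_1(X_G,\R)$; by Abel--Jacobi, its reduction modulo $H_1(X_G,\Z)$ is the divisor class $[(\beta)-(\alpha)]\in J_G$. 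Hence $\{\alpha,\beta\}$ lies in the $\Q$‑span $H_1(X_G,\Q)$ of the period lattice if and only if $[(\beta)-(\alpha)]$ is a torsion point of $J_G$. So it suffices to show that $[(\alpha)-(\beta)]$ is torsion in $J_G$ for every pair of cusps $\alpha,\beta$.

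To run the argument I would first reduce to $G=\Gamma(N)$: any congruence subgroup contains some $\Gamma(N)$, the covering $\psi\colon X_{\Gamma(N)}\to X_G$ sends cusps to cusps, and the pullback $\psi^*\colon J_G\to J_{\Gamma(N)}$ has finite kernel (because $\psi_*\psi^*=[\deg\psi]$), so a class is torsion iff its image under $\psi^*$ is, and $\psi^*[(\alpha)-(\beta)]$ is again supported on cusps. Writing $X=X_{\Gamma(N)}$, $J=\mathrm{Jac}(X)$, fix a prime $\ell\nmid N$ and let $T_\ell$ be the corresponding Hecke correspondence on $X$; it acts on $J$ and on the free abelian group $D$ of degree‑$0$ divisors supported on cusps. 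The heart of the matter is a dichotomy of eigenvalues. Via the residue identification of $D\otimes\Q$ with the space of weight‑$2$ Eisenstein series for $\Gamma(N)$, the eigenvalues of $T_\ell$ on $D\otimes\Q$ are numbers of the shape $\psi_1(\ell)+\ell\,\psi_2(\ell)$ with $\psi_1,\psi_2$ Dirichlet characters of conductor dividing $N$; as $\ell\nmid N$, these have absolute value at least $\ell-1$. On the other hand the eigenvalues of $T_\ell$ on $H_1(X,\Q)$ are the $T_\ell$‑eigenvalues of weight‑$2$ cusp forms for $\Gamma(N)$, and the Eichler--Shimura congruence relation together with Weil's bound for the abelian variety $J\bmod\ell$ gives the Ramanujan estimate $|\lambda|\le 2\sqrt{\ell}$ for all of them.

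Taking $\ell$ large enough that $\ell-1>2\sqrt{\ell}$, the two sets of eigenvalues are disjoint, so the characteristic polynomial $P(x)\in\Z[x]$ of $T_\ell$ acting on $D$ is coprime in $\Q[x]$ to the characteristic polynomial of $T_\ell$ on $H_1(X,\Q)$; hence $P(T_\ell)\colon J\to J$ is an isogeny. By the Cayley--Hamilton theorem $P(T_\ell)$ annihilates $D$, hence every divisor class supported on cusps, so the group of such classes lies in the finite kernel of $P(T_\ell)$ and is therefore finite. Unwinding the reduction along $\psi$ and the Abel--Jacobi reformulation then proves the theorem.

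The step I expect to be the real obstacle is the separation of eigenvalues: one must describe the Hecke action on the cuspidal divisor group precisely enough to recognise its eigenvalues as genuine Eisenstein numbers bounded below by essentially $\ell-1$, and at the same time invoke the Eichler--Shimura relation and Weil's theorem on abelian varieties over finite fields to bound the cuspidal eigenvalues above by $2\sqrt{\ell}$; granted these two inputs, the remainder is formal linear algebra and divisor theory. The auxiliary points --- that cusp forms give globally regular differentials, the Abel--Jacobi reformulation, the descent along $\psi$, and the normalisation of $T_\ell$ on $X(N)$ --- are routine. One could instead follow Manin's original route and dispense with Hecke operators, exhibiting for each pair of cusps an explicit modular unit (an $\eta$‑quotient, or a product of Siegel units) whose divisor is a nonzero integral multiple of $(\alpha)-(\beta)$, which yields the torsion statement directly.
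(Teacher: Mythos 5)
The paper does not prove this statement: it is the classical Manin--Drinfel'd theorem, imported by citation from \cite{drin} and \cite{man1}. Your argument is a correct account of Drinfel'd's proof (the first of the two cited sources): the Abel--Jacobi reformulation (the class $\{\alpha,\beta\}$ lies in $H_1(X_G,\Q)$ iff $[(\beta)-(\alpha)]$ is torsion in the Jacobian), the reduction to $\Gamma(N)$ via $\psi_*\psi^*=[\deg\psi]$, and the separation of Hecke eigenvalues --- Eisenstein eigenvalues $\psi_1(\ell)+\ell\psi_2(\ell)$ of absolute value at least $\ell-1$ on the cuspidal divisor group versus the Ramanujan bound $2\sqrt{\ell}$ on $H_1$ --- so that the integral characteristic polynomial $P(T_\ell)$ of $T_\ell$ on cuspidal divisors is an isogeny of $J$ annihilating the cuspidal subgroup. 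You correctly identify the one step requiring genuine care, namely the Hecke-equivariant identification of the degree-zero cuspidal divisor group with the weight-$2$ Eisenstein space (via residues/boundary of the exact sequence $0\to S_2\to M_2\to\C^{\mathrm{cusps}}_0\to 0$) and the good-reduction input ($\ell\nmid N$) needed for Eichler--Shimura plus Weil; granted these standard facts the argument is complete. Your closing remark that Manin's route via modular units (Siegel units or $\eta$-quotients whose divisors are multiples of $(\alpha)-(\beta)$) gives the same conclusion is also accurate and corresponds to the second cited source.
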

		As an immediate corollary, $\pi(0)-\pi(i\infty)$ is a torsion point in $E(\Q)$. If $t=|E(\Q)_{\text{tors}}|$ then $t\pi_*(\{0,i\infty\})\in H_1(E,\Z)$ and so
		$t\pi(0)=t\pi(\infty)$. 		
		Since $E$ is defined over $\R$, complex conjugation on $E(\C)$ defines a involution $E(\C)\to E(\C)$ and consequently 
		an involution $\iota\colon H_1(E(\C),\Z)\to H_1(E(\C),\Z)$. Using the uniformization theorem for elliptic curves, it is easy to see (cf. Lemma \ref{reallattice}) that there exists a $\Z$-basis $\{\gamma_1,\gamma_2\}$ of $H_1(E(\C),\Z)$ such that $\iota(\gamma_1)=\gamma_1$. The real period of $E$ is defined as $\Omega_E\coloneqq \int_{\gamma_1}\omega_E$; up to 
		replacing $\gamma_1$ by $-\gamma_1$ we can assume that $\Omega_E>0$. By what we said above, it follows that $t\pi_*(\{0,i\infty\})=M\gamma_1$ for some $M\in \Z$. Putting everything together, we finally get
			\begin{equation}\label{Lvalue3}
    				L(E,1)=\frac{M\cdot \Omega_E}{c\cdot |E(\Q)_{\text{tors}}|}.
		\end{equation}	
		Now we would like to deduce from \eqref{Lvalue3} an effective integer $Q$ such that $\displaystyle\frac{L(E,1)}{\Omega_E}\cdot Q$ is a non-zero integer, under the assumption that $L(E,1)\neq 0$. Since  $|E(\Q)_{\text{tors}}|$ and $\Omega_E$ can easily be computed (see for example \cite[Algorithm 7.4.7]{coh} or \cite{creto}), this would give us an efficient way to decide if $L(E,1)$ vanishes or not, by computing $L(f,1)=L(E,1)$ with a sufficient precision, and to compute $\displaystyle\frac{L(E,1)}{\Omega_E}$ whenever $L(E,1)\neq 0$. In order to do this, we need to find an explicit multiple of $c$. In principle one could assume that $E$ is the strong curve in its isogeny class, so that under conjecture \ref{mancon} one can assume that $c=1$, then compute its period and finally use the absolute bound on $|E(\Q)_{\text{tors}}|$ to get our desired $Q$. In \cite{gold}, the author proves that there is an algorithm which allows us to do that in polynomial time with respect to the conductor of $E$. However, our philosophy is to avoid computing with modular symbols, since this can take a huge amount of time when the conductor of $E$ is large. Therefore we will show how to find a multiple of $c$, which depends only on $E$, assuming conjecture \ref{mancon} and a certain condition on $\pi$ that we will explain below. For the rest of this section, we will assume that $E$ does not have CM. Let $\pi'\colon X_0(N)\to E'$ be a strong modular parametrization which dominates $\pi$. Let $\omega_{E'}$ be a N\'{e}ron differential on $E'$. Let $\psi\colon E'\to E$ be the isogeny such that $\psi\circ\pi'=\pi$. Then $\psi^*(\omega_E)=c\cdot \omega_{E'}$ and since the dual isogeny $\widehat{\psi}$ extends to a map of N\'{e}ron models it is clear that $\widehat{\psi}^*(\omega_{E'})=a\cdot\omega_E$ for some $a\in \Z$. Since $\psi\circ \widehat{\psi}$ coincides with multiplication by $\deg\psi$, we see that $c$ divides $\deg\psi$. Now let $\varphi$ be an element of minimal degree in $\Hom(E',E)$. Since $E$ does not have CM, there exists some non-zero $m\in \mathbb Z$ such that $\psi=m\varphi$. Then the map $\overline{\pi}\coloneqq\varphi\circ\pi'\colon X_0(N)\to E$ is again a modular parametrization of $E$. Clearly such a parametrization has Manin constant equal to $c/m$. The same computations of $L(E,1)$ that led us to \eqref{Lvalue3} can be performed using $\overline{\pi}\colon X_0(N)\to E$, leading us this time to:
			\begin{equation}\label{Lvalue4}
    				L(E,1)=\frac{m\cdot M'\cdot \Omega_E}{c\cdot |E(\Q)_{\text{tors}}|}
		\end{equation}
		for some other $M'\in \Z$. Both \eqref{Lvalue3} and \eqref{Lvalue4} give us an integral multiple of the same rational number:
		$$L(E,1)= \frac{\Omega_E\cdot v}{c\cdot |E(\Q)_{\text{tors}}|} \mbox{ for some } v\in \Z.$$
		This argument shows that for our purpose we can assume that $\psi=\varphi$. Now we can proceed in the following way: first we compute the curves $E_1,\dots,E_n$ in the $\Q$-isogeny class of $E$; then for each $i=1,\ldots,n$ we set $s_i\coloneqq \min\{\deg\varphi\colon \varphi \in \Hom_{\Q}(E_i,E)\}$ and finally we let $\displaystyle s\coloneqq \gcd(s_i\colon i=1,\ldots,n)$. Since, as we said above, $c$ divides $\deg\psi$, then $c$ divides $s$. Therefore we get that
		\begin{equation}\label{stima}
			\frac{L(E,1)}{\Omega_E}=\frac{v}{s\cdot|E(\Q)_{\text{tors}}|} \mbox{ for some } v\in \Z.
		\end{equation}
		Equation \eqref{stima} has two immediate applications. The first one is the following: assume that $L(E,1)\neq 0$ and that we want to compute the $L$-ratio $\displaystyle \frac{L(E,1)}{\Omega_E}$. This is a rational number of which we know a multiple of the denominator, namely $s\cdot|E(\Q)_{\text{tors}}|$. Now recall the following elementary lemma.
			\begin{lemma}\label{Lratio}
				Let $B\in \N_{>1}$. Then for every $x\in \R$ there exists at most one $p/q\in \Q$ with $q$ a positive divisor of $B$ such that $\displaystyle |x-p/q|<\frac{1}{2B}$.
		\end{lemma}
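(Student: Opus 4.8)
The plan is to prove the uniqueness statement directly: I would suppose that $p_1/q_1$ and $p_2/q_2$ are two rationals, with $q_1,q_2$ positive divisors of $B$, both satisfying $|x-p_i/q_i|<1/(2B)$, and then show $p_1/q_1=p_2/q_2$. The first step is to bring both fractions over the common denominator $B$: since $q_i\mid B$, we may write $p_i/q_i=a_i/B$ with $a_i=p_iB/q_i\in\Z$. The only point that requires a moment's thought is that one should \emph{not} clear denominators using $q_1q_2$, since the resulting bound $q_1q_2\le B^2$ is too weak; using $B$ itself is precisely what makes the argument go through. Equivalently, the set of all $p/q$ with $q$ a positive divisor of $B$ is exactly $\tfrac1B\Z$.

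Next, if the two fractions were distinct, then $a_1\neq a_2$, hence $|a_1-a_2|\ge 1$, and so
\[
\left|\frac{p_1}{q_1}-\frac{p_2}{q_2}\right|=\frac{|a_1-a_2|}{B}\ge\frac{1}{B}.
\]
On the other hand, the triangle inequality gives
\[
\left|\frac{p_1}{q_1}-\frac{p_2}{q_2}\right|\le\left|x-\frac{p_1}{q_1}\right|+\left|x-\frac{p_2}{q_2}\right|<\frac{1}{2B}+\frac{1}{2B}=\frac{1}{B},
\]
a contradiction. Therefore $p_1/q_1=p_2/q_2$, which is exactly the asserted uniqueness; phrased geometrically, the open interval $(x-\tfrac1{2B},x+\tfrac1{2B})$ has length $1/B$ and hence meets the discrete set $\tfrac1B\Z$ in at most one point.

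There is essentially no serious obstacle here: the entire content is the observation that a rational number whose denominator divides $B$ is an integer multiple of $1/B$, so two such numbers are either equal or at distance at least $1/B$, while the hypothesis forces any two candidates to lie at distance strictly less than $1/B$ from each other. The only thing to be careful about is not to weaken this spacing by using an unnecessarily large common denominator.
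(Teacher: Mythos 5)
Your proof is correct and follows essentially the same route as the paper: the paper bounds the spacing between two candidates by $1/\mathrm{lcm}(q,s)\geq 1/B$, which is the same observation as your reduction to the common denominator $B$ (i.e.\ that all candidates lie in $\tfrac{1}{B}\Z$), and both arguments conclude via the triangle inequality on an interval of length $1/B$.
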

			\begin{proof}
				Let $p/q$ and $r/s$ be two distinct rational numbers such that $q,s$ are positive divisors of $B$. Let $l=\text{lcm}(q,s)$, so that $l\leq B$. Then
				$$\left|\frac{p}{q}-\frac{r}{s}\right|=\frac{|p\cdot (l/q)-r\cdot(l/s)|}{l}\geq \frac{1}{B}.$$
				This shows that inside an open interval of length $1/B$ there is at most one rational number with the denominator dividing $B$, and the claim follows.
		\end{proof}
		Notice that the bound given in the lemma is sharp, since if $\displaystyle x=\frac{3}{2B}$, then $|x-1/B|=|x-2/B|=1/(2B)$.

		By equation \eqref{stima}, the $L$-ratio $\displaystyle \frac{L(E,1)}{\Omega_E}$ is a rational number whose denominator divides $B\coloneqq s\cdot |E(\Q)_{\text{tors}}|$. Suppose that one can numerically compute $\displaystyle \frac{L(E,1)}{\Omega_E}$ within a sufficiently high precision.  Let $x$ be the approximate value found; the exact value of $\displaystyle\frac{L(E,1)}{\Omega_E}$ is by the above lemma the unique rational number of the form $\lfloor x \rfloor+A/B$ where $A\in \Z$ is such that $|A|<B$ and such that $\displaystyle \left|x-\left(\lfloor x \rfloor+\frac{A}{B}\right)\right|<\frac{1}{2B}$. Equivalently, $A$ is the unique integer such that
		$$|B(x-\lfloor x \rfloor)-A|<\frac{1}{2}.$$

		The second application is the following: suppose that we can compute $L(E,1)$, finding $0$ within a given precision. How can we decide whether the value is exactly $0$ or a very small non-zero number? Equation \ref{stima} tells us that if $L(E,1)\neq 0$ then
		\begin{equation}
		|L(E,1)|\geq \frac{\Omega_E}{s\cdot|E(\Q)_{\text{tors}}|}.
		\end{equation}
		Therefore if we find numerically that $\displaystyle L(E,1)<\frac{\Omega_E}{s\cdot|E(\Q)_{\text{tors}}|}$, then we must have $L(E,1)=0$. Note that for this purpose one can substitute $s$ in the equation above by $s'\coloneqq \max_i\{s_i\}$ where the $s_i$'s are defined as above; in fact it is clear that $\deg\psi\leq s'$.

  	\section{Modular and strongly modular elliptic curves over $\OQ$}\label{strmod}
		Our goal is to get an analogue of equation \eqref{stima} for a more general class of elliptic curves. We say that an elliptic curve $E/\OQ$ is \emph{modular} if there exists $N\in \N$ and a non-constant map of algebraic curves $X_1(N)_{\OQ}\to E$. Note that elliptic curves over $\Q$ are modular in this sense, since for every $N$ there is a natural map $X_1(N)\to X_0(N)$.
  			\begin{definition}
			Let $K$ be a Galois extension of~$\Q$ inside~$\OQ$. An elliptic curve $E/K$ is called a \emph{$\Q$-curve} if for every $\sigma \in\gal(K/\Q)$ there exists an isogeny $\mu_{\sigma}\colon \s E\to E$. We say that $E$ is \emph{completely defined} over~$K$ if $E$ is defined over $K$ and all $\OQ$-isogenies between the $\s E$ are defined over $K$.
		\end{definition}
		By the theory of complex multiplication, every elliptic curve with CM is a $\Q$-curve. From now on, we will always assume that our $\Q$-curves do not have CM.

		If $E/\OQ$ is a $\Q$-curve, one can define a $2$-cocycle in the following way: for every $\sigma\in \GQ$ choose an isogeny $\mu_{\sigma}\colon \s E\to E$ so that the system $\{\mu_{\sigma}\}_{\sigma\in \GQ}$ is locally constant. Then let
		$$\xi(E)\colon \GQ\times \GQ\to \Q^*$$
		$$(\sigma,\tau)\mapsto \mu_{\sigma}\s\mu_{\tau}\mu_{\sigma\tau}^{-1}$$
		where we identified $\en(E)\otimes \Q\simeq\Q$. This is a $2$-cocycle whose class depends only on the isogeny class of $E$ and not on the choice of the $\mu_{\sigma}$. If $E$ is completely defined over a number field $L$, one can choose $L$-isogenies $\mu_{\sigma}$ for every $\sigma \in \gal(L/\Q)$ and in the same way obtain a $2$-cocycle $\xi_L(E)$ whose class in $H^2(L/\Q,\Q^*)$ depends only on the $L$-isogeny class of $E$.
		\begin{theorem}[{{Khare--Wintenberger \cite{kwin},\cite{kwin2} and Ribet \cite{rib1}}}]
                \label{Qcurvesaremodular}
				An elliptic curve $E/\OQ$ without CM is modular if and only if it is a $\Q$-curve.
		\end{theorem}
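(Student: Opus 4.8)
The plan is to prove the two implications separately. The implication ``$E$ modular $\Rightarrow$ $E$ a $\Q$-curve'' I would prove unconditionally, following Ribet; the converse is where Serre's conjecture, i.e.\ the Khare--Wintenberger theorem, is indispensable, and I expect it to be the main obstacle---indeed the whole point of that half of the argument is to reduce to it.

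\textbf{Modular implies $\Q$-curve.} Given a non-constant map $X_1(N)_{\OQ}\to E$, I would first pass to Jacobians: choosing a base point, Albanese functoriality turns this into a surjection $J_1(N)_{\OQ}\to E$, so that $E$ is an isogeny factor of $J_1(N)_{\OQ}$. By Shimura's decomposition, $J_1(N)$ is isogenous over $\Q$ to a product of the abelian varieties $A_f$ attached to weight-$2$ newforms $f$ of level dividing $N$, and $\en^0(A_f)$ contains the eigenvalue field $\Q(a_n(f))$, of degree $\dim A_f$; thus each $A_f$ is of $\mathrm{GL}_2$-type. The key input is then Ribet's structure theorem for $\mathrm{GL}_2$-type abelian varieties: $(A_f)_{\OQ}$ is isotypic, say $(A_f)_{\OQ}\sim B_f^{t_f}$ with $B_f$ a simple building block. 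Since $E$ is an $\OQ$-isogeny factor of some $(A_f)_{\OQ}$ and $\dim E=1$, necessarily $\dim B_f=1$ and $E\sim_{\OQ}B_f$. Finally I would use that $A_f$ is defined over $\Q$: for $\sigma\in\GQ$ the base-change $\s(A_f)_{\OQ}$ is isomorphic to $(A_f)_{\OQ}$, so comparing isotypic decompositions gives $\s B_f\sim_{\OQ}B_f$, hence $\s E\sim_{\OQ}E$ for every $\sigma$. That is exactly the statement that $E$ is a $\Q$-curve.

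\textbf{$\Q$-curve implies modular.} Fix a finite Galois extension $K/\Q$ over which $E$ is completely defined. The Weil restriction $\res_{K/\Q}E$ is an abelian variety over $\Q$ whose base-change to $K$ is isogenous to $\prod_{\sigma\in\gal(K/\Q)}\s E$, hence to $E^{[K:\Q]}$ because $E$ is a $\Q$-curve completely defined over $K$. It need not be of $\mathrm{GL}_2$-type, but following Ribet one can cut out a quotient $A/\Q$ of $\mathrm{GL}_2$-type, possibly after a twist, that still has $E$ as an $\OQ$-isogeny factor; the one subtle point here is to trivialise the $2$-cocycle $\xi(E)$ of the previous section by a suitable splitting character, which is always possible. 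To $A$ one attaches the compatible system of $2$-dimensional $\lambda$-adic representations $\rho_\lambda\colon\GQ\to\mathrm{GL}_2(F_\lambda)$ with $F=\en^0(A)$; these are odd, and since $E$ has no CM their reductions $\bar\rho_\lambda$ are absolutely irreducible for infinitely many $\lambda$ (their images are not contained in the normaliser of a torus). By Serre's conjecture (Khare--Wintenberger) each such $\bar\rho_\lambda$ is modular. One then upgrades this to modularity of $A$: either by invoking modularity lifting theorems, or---closer to Ribet's original reduction---by noting that a compatible system whose reductions at infinitely many primes are modular is itself modular, since the traces of Frobenius are then pinned down as algebraic numbers and Faltings' isogeny theorem identifies $A$ with $A_g$ for a newform $g$. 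In either case $A$ is a quotient of some $J_1(M)$, and since $E$ is an $\OQ$-isogeny factor of $A$, the curve $E$ is modular.

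\textbf{On the hypotheses and the hard step.} The no-CM assumption enters twice: it guarantees $\en^0(E)=\Q$, which makes the isotypic bookkeeping in the first half clean, and it forces the residual representations in the second half to have large enough image for the irreducibility and modularity inputs to apply---for CM curves these representations are dihedral and the statement, though true, is part of the classical theory of complex multiplication. Apart from Serre's conjecture, every ingredient is of a ``formal'' nature: Weil restriction, twisting by cocycles, Shimura's decomposition of $J_1(N)$, and Ribet's isotypic structure theorem for $\mathrm{GL}_2$-type abelian varieties. The genuinely deep input, and the reason the equivalence could only be completed once Serre's conjecture was proved, is the modularity of the residual Galois representations $\bar\rho_\lambda$.
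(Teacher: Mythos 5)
Your proposal is correct and follows essentially the same route as the paper, which only sketches the ``$\Q$-curve $\Rightarrow$ modular'' direction (choose a Galois field $L$ over which $E$ is completely defined with $\xi_L(E)$ trivialised via Tate's theorem, realise some $A_f$ as an isogeny factor of $\res_{L/\Q}(E)$, and exhibit $E$ as a quotient of $(A_f)_{\OQ}$) and defers the rest to Ribet and Khare--Wintenberger. Your fuller account of both implications---Albanese functoriality and the isotypic decomposition of $(A_f)_{\OQ}$ for the easy direction, and Serre's conjecture plus Faltings for the hard one---is the standard argument of Ribet's paper and is consistent with what the authors intend.
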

		Ribet proved this theorem assuming Serre's conjecture, which was later proved in \cite{kwin} and \cite{kwin2}. The idea of the proof is essentially as follows. One picks a Galois number field $L$ over which $E$ is completely defined and such that the class of $\xi_L(E)$ in $H^2(L/\Q,\OQ^*)$ is trivial. Such a number field always exists because of a theorem of Tate (see \cite[Theorem 4]{ser}). Then there exists a newform $f\in S_2(\Gamma_1(N),\varepsilon)$ for some $N,\varepsilon$ such that the abelian variety $A_f$ attached to $f$ (for details on the construction of $A_f$, see \cite{shi1}) is one of the factors up to isogeny of the abelian variety $\res_{L/\Q}(E)$. The curve $E$ is then a quotient of $(A_f)_{\OQ}$, and composing the quotient map $(A_f)_{\OQ}\to E$ with the map $X_1(N)\to A_f$, we get the desired result.

		There is a fundamental difference between elliptic curves over $\Q$ and $\Q$-curves over bigger number fields: while, as we have seen, if $E$ is an elliptic curve over $\Q$, its $L$-function coincides with the $L$ function of some newform $f$, the same is not true in general for $\Q$-curves. In order to generalize the method used in section \ref{curvesoverQ}, one would like to be able to relate the $L$-function of $E$ to the $L$-function of a cuspform. This motivates the following definition, given in \cite{guique}.
			\begin{definition}
				An abelian variety $A$ over a number field $K$ is said \emph{strongly modular} if $L(A/K,s)=\prod_{i=1}^tL(f_i,s)$ for some newforms $f_i\in S_2(\Gamma_1(N_i),\varepsilon_i)$.
		\end{definition}

		\begin{proposition}
		Let $A/K$ be a strongly modular abelian variety. Then the newforms $f_1,\ldots,f_n$ such that $\displaystyle L(A/K,s)=\prod_{i=1}^nL(f_i,s)$ are unique, up to reordering.
	\end{proposition}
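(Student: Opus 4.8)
The plan is to deduce uniqueness of the decomposition $L(A/K,s)=\prod_{i=1}^{n}L(f_i,s)$ from the fact that an $L$-function of a newform determines the newform, together with the unique factorization into ``primitive'' Euler products. First I would recall that each $L(f_i,s)$ has a Dirichlet series $\sum_{m\geq1}a_m(f_i)m^{-s}$ which converges in a right half-plane and admits an Euler product $\prod_p\prod_j(1-\alpha_{p,j}^{(i)}p^{-s})^{-1}$; multiplying finitely many of these together, the product $L(A/K,s)$ is again a Dirichlet series with an Euler product, absolutely convergent for $\Re(s)$ large. The key input is \emph{multiplicity one / strong multiplicity one} for $\mathrm{GL}_2$ newforms: a newform $f\in S_2(\Gamma_1(N),\varepsilon)$ is uniquely determined by its Hecke eigenvalues $a_p(f)$ for all but finitely many primes $p$, equivalently by the function $L(f,s)$. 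So it suffices to show that the \emph{multiset} $\{L(f_1,s),\dots,L(f_n,s)\}$ is recovered from the product.

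The main step is therefore the following separation argument. Suppose $\prod_{i=1}^{n}L(f_i,s)=\prod_{j=1}^{m}L(g_j,s)$ with all $f_i,g_j$ newforms. Taking logarithmic derivatives (or just logarithms) of the Euler products and comparing the coefficient of $p^{-s}$ for a prime $p$ not dividing any of the relevant levels, one gets $\sum_i a_p(f_i)=\sum_j a_p(g_j)$; comparing the coefficient of $p^{-2s}$, $p^{-3s}$, and so on, one recovers all the power sums $\sum_i a_{p^k}(f_i)$, and hence, via Newton's identities applied prime by prime, the local factors match as multisets of polynomials in $p^{-s}$: $\{\,(1-\alpha_{p,1}^{(i)}p^{-s})(1-\alpha_{p,2}^{(i)}p^{-s})\,\}_i$ equals the analogous multiset for the $g_j$'s, for every unramified $p$. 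An alternative and cleaner route: the Rankin--Selberg $L$-function $L(f_i\times\bar f_{i'},s)$ has a pole at $s=1$ iff $f_i$ and $f_{i'}$ are Galois-conjugate (indeed equal for trivial nebentypus matching); so by forming Rankin--Selberg convolutions of $L(A/K,s)$ against a fixed newform $g$ and examining the order of the pole at $s=1$, one counts the multiplicity of each newform (up to the necessary equivalence) in the decomposition, and this count is intrinsic to $L(A/K,s)$.

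From the multiset of local factors at almost all primes one concludes, by strong multiplicity one, that the multiset $\{f_1,\dots,f_n\}$ equals $\{g_1,\dots,g_m\}$; in particular $n=m$ and the decomposition is unique up to reordering, as claimed. I expect the main obstacle to be purely expository: making precise which form of multiplicity one is invoked (strong multiplicity one suffices, and is standard for holomorphic newforms) and ensuring the finitely many ramified primes are handled — but since two newforms agreeing at almost all primes must have the same level and nebentypus and then agree everywhere, this causes no real difficulty. A short remark should also note that one may instead argue analytically via the nonvanishing and simple-pole behavior of Rankin--Selberg $L$-functions, which avoids any appeal to the arithmetic of the $\alpha_{p,j}$ altogether.
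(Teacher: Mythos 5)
Your main argument (Euler products plus Newton's identities) has a genuine gap at the step ``the local factors match as multisets of polynomials in $p^{-s}$''. Comparing the logarithms of the two Euler products at an unramified prime $p$ gives you, for each $k$, the equality of the $k$-th power sums of the \emph{combined} multisets of Satake parameters $\{\alpha_{p,1}^{(i)},\alpha_{p,2}^{(i)}\}_{i\le n}$ and $\{\beta_{p,1}^{(j)},\beta_{p,2}^{(j)}\}_{j\le m}$; Newton's identities then recover these two multisets of $2n$ (resp.\ $2m$) nonzero numbers and show they coincide, so $n=m$. But this does \emph{not} recover the partition of that multiset into pairs belonging to individual newforms: a multiset $\{a,b,c,d\}$ admits several pairings, giving different multisets of quadratic factors with the same degree-four product. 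Worse, even if the multiset of quadratic local factors were recovered at every prime, you would still face a gluing problem across primes: you must know which local factor at $p$ and which at $q$ come from the \emph{same} newform before strong multiplicity one can be applied. Strong multiplicity one identifies a single newform from its system of eigenvalues at almost all primes; it does not, by itself, disentangle a sum of several such systems. This disentangling is exactly what the paper's argument supplies, via Deligne's $\ell$-adic representations: the trace identity $\sum_i a_p(f_i)=\sum_j a_p(g_j)$ on a density-one set of primes forces $\bigoplus_i\rho_l(f_i)\simeq\bigoplus_j\rho_l(g_j)$ (semisimple representations are determined by traces of Frobenius), and irreducibility of each $\rho_l(f_i)$ then matches the summands one by one.

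Your alternative Rankin--Selberg route is, in contrast, a viable and genuinely different argument from the paper's: the order of the pole of $L(A/K\times\bar g,s)=\prod_i L(f_i\times\bar g,s)$ at the edge of the critical strip counts the multiplicity of $g$ among the $f_i$, and this is intrinsic to $L(A/K,s)$. This is essentially the Jacquet--Shalika proof that an isobaric sum determines its cuspidal constituents, and it replaces the paper's appeal to Galois representations and Brauer--Nesbitt by analytic input. One correction, though: the pole of $L(f\times\bar g,s)$ detects $g=f$ as newforms, \emph{not} Galois conjugacy --- Galois-conjugate newforms are distinct and their Rankin--Selberg convolution is entire --- so your parenthetical there should be deleted. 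If you take this route you should also say a word about the finitely many ramified Euler factors, which are nonvanishing and hence do not affect the pole count.
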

		\begin{proof}

		Let $B=\res_{K/\Q}(A)$, so that $\displaystyle L(B/\Q,s)=\prod_{i=1}^nL(f_i,s)$. Let $\{g_1,\ldots,g_m\}$ be another set of newforms with $\displaystyle L(B/\Q,s)=\prod_{i=1}^mL(g_i,s)$. Since $\displaystyle\prod_{i=1}^nL(f_i,s)=\prod_{j=1}^mL(g_j,s)$, the Dirichlet series of the left hand side and the right hand side coincide (in a suitable right half-plane of convergence). Thus for every prime $p$,
		\begin{equation}\label{galois_reps}
		\sum_{i=1}^na_p(f_i)=\sum_{j=1}^ma_p(g_j),
		\end{equation}
		where $a_p(f_i)$ (resp. $a_p(g_j)$) is the $p$-th coefficient in the $q$-expansion of $f_i$ (resp. $g_j$).

		For every newform $f\in S_2(\Gamma_1(M),\varepsilon)$ and every prime $l$ not dividing $M$, we denote by $\rho_l(f)$ the $l$-adic Galois representations attached to $f$ (see \cite{del}). Recall that this is a $2$-dimensional, irreducible representation with values in a finite extension of $\Q_l$ with the property that
		$$\tr(\rho_l(f)(\fr_p))=a_p(f),\quad \mbox{for all primes }p\nmid Ml.$$
		Now let $N$ be the product of all primes dividing the levels of the $f_i$'s and $g_j$'s and let $l$ be a prime not dividing $N$. Equation \eqref{galois_reps} implies that
		$$\tr\left(\bigoplus_{i=1}^n\rho_l(f_i)\right)(\fr_p)=\tr\left(\bigoplus_{j=1}^m\rho_l(g_j)\right)(\fr_p)\quad \forall\, p.$$
		It is well-known that semisimple, finite-dimensional Galois representations in characteristic $0$ are completely determined up to isomorphism by their traces at $\fr_p$ for every $p$ in a set of density $1$ (see for example \cite[Lemma 3.2]{delser}). Thus, the representations $\displaystyle \bigoplus_{i=1}^n\rho_l(f_i)$ and $\displaystyle\bigoplus_{j=1}^m\rho_l(g_j)$ are isomorphic. By looking at the dimension, we have that $n=m$ necessarily. Moreover, since all the components are irreducible, we can assume up to reordering that
		$$\rho_l(f_i)\simeq\rho_l(g_i)\quad \forall \, i\in \{1,\ldots,n\}.$$
		It follows that for all $i\in\{1,\ldots,n\}$ we have that $a_p(f_i)=a_p(g_i)$ for all primes $p$. Now \cite[Theorem 4.6.19]{miy} shows that $f_i=g_i$. This is done first by showing that the levels of $f_i$ and $g_i$ coincide by looking at the functional equation of their $L$-functions and then using the multiplicity one principle for newforms.
	\end{proof}
		Strongly modular $\Q$-curves are characterized by the following theorem.
			\begin{theorem}[{{\cite[Theorem~5.3]{guique}}}]
				A $\Q$-curve $E$ completely defined over a Galois number field $L$ is strongly modular if and only if $\gal(L/\Q)$ is abelian and the cocycle $\xi_L(E)$ is symmetric, namely $c(g,h)=c(h,g)$ for every cocycle $c$ representing $\xi_L(E)$.
		\end{theorem}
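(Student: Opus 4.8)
The plan is to pass to the Weil restriction $\res_{L/\Q}(E)$, whose $L$-function is $L(E,s)$, and to reduce the statement to a computation of the algebra of $\GQ$-equivariant endomorphisms of its $\ell$-adic realisation. Fix a prime $\ell$, let $V$ be the two-dimensional $\overline{\Q}_\ell$-representation of $G_L$ on $V_\ell(E)\otimes\overline{\Q}_\ell$, put $W=\mathrm{Ind}_{G_L}^{\GQ}V\cong V_\ell(\res_{L/\Q}(E))\otimes\overline{\Q}_\ell$, and write $G=\gal(L/\Q)$. I would prove: (i) $\en_{\GQ}(W)$ is isomorphic, as a $\overline{\Q}_\ell$-algebra, to the twisted group algebra $\overline{\Q}_\ell[G]_{\xi_L(E)}$; (ii) $E$ is strongly modular if and only if $\en_{\GQ}(W)$ is commutative; (iii) a twisted group algebra $k[G]_c$ is commutative if and only if $G$ is abelian and $c$ is symmetric. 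Point (iii) is elementary: compare $e_g e_h=c(g,h)e_{gh}$ with $e_h e_g=c(h,g)e_{hg}$. So the content is (i) and (ii), and the theorem follows by composing the three equivalences.

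For (i): since $E$ is a $\Q$-curve completely defined over $L$, every $\sigma\in G$ gives an $L$-isogeny $\mu_\sigma\colon\s E\to E$, hence an isomorphism of $G_L$-representations $\s V\xrightarrow{\sim}V$, so $V$ is $G$-stable. By Frobenius reciprocity $\en_{\GQ}(W)\cong\bigoplus_{\sigma\in G}\Hom_{G_L}(V,\s V)$; because $E$ has no CM the representation $V$ is absolutely irreducible, so each summand is one-dimensional, spanned by the isomorphism induced by $\mu_\sigma$. Tracking the composition law through the reciprocity isomorphism and using $\en(E)\otimes\Q=\Q$, the product of these basis vectors is twisted by exactly the $2$-cocycle $\mu_\sigma\,\s\mu_\tau\,\mu_{\sigma\tau}^{-1}=\xi_L(E)(\sigma,\tau)$; this gives (i), equivalently $\en^0_\Q(\res_{L/\Q}(E))\cong\Q[G]_{\xi_L(E)}$, which is the endomorphism computation of Ribet \cite{rib1}.

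For (ii): decompose $W=\bigoplus_iW_i^{\oplus m_i}$ with the $W_i$ irreducible and pairwise non-isomorphic, so $\en_{\GQ}(W)\cong\prod_iM_{m_i}(\overline{\Q}_\ell)$; Frobenius reciprocity identifies $\Hom_{G_L}(V,W_i|_{G_L})$ with the $m_i$-dimensional space $\Hom_{\GQ}(W,W_i)$, so $V$ occurs with multiplicity $m_i$ in $W_i|_{G_L}$ and $\dim W_i\ge 2m_i$. If $E$ is strongly modular, $L(E,s)=\prod_jL(f_j,s)$, then comparing Dirichlet series and using that a semisimple $\GQ$-representation in characteristic $0$ is determined by its Frobenius traces on a density-one set of primes (as in the proof of the Proposition above) gives $W\cong\bigoplus_j\rho_{f_j}$, with $\rho_{f_j}$ the two-dimensional $\overline{\Q}_\ell$-representation attached to $f_j$; hence every $W_i$ is some $\rho_{f_j}$, is two-dimensional, so $m_i=1$ and $\en_{\GQ}(W)$ is commutative. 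Conversely, if $\en_{\GQ}(W)$ is commutative then all $m_i=1$, there are $\#G$ summands, and $\dim W=2\#G$ forces each $\dim W_i=2$ and $W_i|_{G_L}\cong V$; by Theorem~\ref{Qcurvesaremodular} the curve $E$ is modular, so $\res_{L/\Q}(E)$ has an isogeny factor $A_f$ for a newform $f$ and one $W_{i_0}$ is the representation of a Galois conjugate of $f$; every other $W_i$ restricts to the same irreducible $V$ on $G_L$, hence is isomorphic to $W_{i_0}\otimes\psi_i$ for a character $\psi_i$ of $G$, hence is the representation of the newform obtained from that conjugate of $f$ by twisting by $\psi_i$. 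Thus $L(E,s)=\prod_iL(W_i,s)$ is a product of newform $L$-functions and $E$ is strongly modular.

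The hard part will be (i), and specifically the multiplicative structure: matching dimensions is immediate, but one must check that the algebra $\bigoplus_{\sigma}\Hom_{G_L}(V,\s V)$ is twisted by precisely the cocycle $\xi_L(E)$ coming from the chosen $\mu_\sigma$ — this is where the $\Q$-curve hypothesis and the normalisation of $\xi_L(E)$ really enter, and it is essentially Ribet's endomorphism computation for Weil restrictions. A secondary subtlety is that, in the converse direction, the two-dimensional $W_i$ must genuinely be modular (odd, with the correct Nebentypus); I would deal with this not by a direct check but by producing one modular constituent from $\res_{L/\Q}(E)$ via Theorem~\ref{Qcurvesaremodular} and obtaining the rest by twisting by characters of $G$, which preserves modularity. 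The remaining steps are representation-theoretic bookkeeping together with the elementary point (iii).
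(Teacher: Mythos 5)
The paper does not actually prove this statement; it is imported from Guitart--Quer \cite{guique}, and your argument is in substance a reconstruction of their proof: identify $\en^0_{\Q}(\res_{L/\Q}(E))$ (equivalently, by Faltings, the algebra of $\GQ$-equivariant endomorphisms of the induced $\ell$-adic representation) with the twisted group algebra $\Q[G]_{\xi_L(E)}$; show that strong modularity is equivalent to commutativity of this algebra; and observe that a twisted group algebra is commutative exactly when the group is abelian and the cocycle symmetric. Your steps (i) and (iii) are fine as presented, with (i) correctly deferred to Ribet's endomorphism computation, and the forward half of (ii) is correct: it uses semisimplicity of $W$ (Faltings) to pass from equality of Dirichlet series to an isomorphism of representations, and the inequality $\dim W_i\ge 2m_i$ to get multiplicity one.

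The one step that does not follow as written is in the converse of (ii): from modularity of $E$ in the sense of Theorem~\ref{Qcurvesaremodular} (a nonconstant map $X_1(M)_{\OQ}\to E$) you infer that $\res_{L/\Q}(E)$ has an isogeny factor $A_f$. The map $A_f\to E$ produced by Ribet's construction is a priori defined only over $\OQ$, or over an auxiliary Galois field $L'$ trivializing $\xi_{L'}(E)$ in $H^2(L'/\Q,\OQ^*)$, and this field need not be $L$; so $\Hom_{\Q}(A_f,\res_{L/\Q}(E))=\Hom_L(A_f,E)$ could vanish even though $\Hom_{\OQ}(A_f,E)\neq 0$. Two repairs are available. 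Either run your twisting argument over a larger Galois field $L''\supseteq L$ over which some nonzero map $A_f\to E$ is defined: each $W_i$ restricts irreducibly to $G_{L''}$ and becomes isomorphic there to $\rho_f$, hence $W_i\cong\rho_f\otimes\psi_i$ for a character $\psi_i$ of $\gal(L''/\Q)$, which is still the representation of a newform. Or, closer to the cited source, note that commutativity of $\Q[G]_{\xi_L(E)}\cong\en^0_\Q(\res_{L/\Q}(E))$ splits $\res_{L/\Q}(E)$ up to isogeny into abelian varieties of $\mathrm{GL}_2$-type (the idempotents of the product of number fields, plus the standard dimension count), each of which is isogenous to some $A_{f_j}$ by Ribet together with Khare--Wintenberger; this route also disposes of the oddness and Nebentypus issue you flag at the end. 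With either repair the proof is complete.
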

		An immediate corollary of this theorem is that $\Q$-curves completely defined over quadratic fields are strongly modular, because every cocycle class in $H^2(C_2,\Q^*)$ is symmetric.

  		\section{Modular abelian varieties and building blocks}\label{buiblo}
  		Let $\displaystyle f=\sum_{n=1}^{+\infty}a_nq^n \in S_2(\Gamma_1(N),\varepsilon)$ be a newform. The number field 
  		generated by the Fourier coefficients of $f$ will be denoted by $F$. We say that
  		$f$ has CM if there exists a non-trivial Dirichlet character $\chi$ such that $a_p=\chi(p)a_p$ for almost all $p$.

        Suppose that $f$ does not have CM. Let $\Gamma$ be the set of embeddings $\gamma \colon F \to \C$ such that 
  		there exists a Dirichlet character $\chi_{\gamma}$ with $\gamma(a_p)=\chi_{\gamma}(p)a_p$ for almost all primes $p$. Note 
  		that $\chi_{\gamma}$ is unique if it exists because $f$ does not have CM. It is proved in \cite{rib2} that $\Gamma$ is an abelian 
  		subgroup of $\aut(F)$ whose fixed field $F^{\Gamma}$ is $\Q(a_p^2/\varepsilon(p))$ where $p$ runs over a set $S$ of primes 
  		not dividing $N$ and having density $1$.
			\begin{definition}
    			The number field $L\coloneqq \OQ^{\cap_{\gamma}\ker\chi_\gamma}$ is called the \emph{splitting field} of $f$.
		\end{definition}
  		The abelian variety $A_f$ attached to $f$ is $\Q$-simple and has dimension equal to $[F\colon \Q]$; moreover $F$ is isomorphic to $\en_{\Q}^0(A_f)$ via the map that associates $a_n$ to $T_n$ and $\varepsilon(d)$ to $\langle d \rangle$ for all primes $d\in (\Z/N\Z)^*$. It is proved in \cite{gola} that the splitting field of $f$ is the smallest field over which all endomorphisms of $A_f$ are defined. The abelian variety $A_f$ is isogenous over $L$ to the power of an absolutely simple abelian variety $B_f$, called a \emph{building block} of $A_f$. The dimension of a building block satisfies the equality $\dim B_f=t\cdot [F^{\Gamma}\colon\Q]$ where $t$ is the \emph{Schur index} of $A_f$; it can be either $1$ or $2$ depending on the splitting of the class in $H^2(F/F^{\Gamma},F^*)$ of the $2$-cocycle
		$$c\colon \gal(F/F^{\Gamma})\times\gal(F/F^{\Gamma})\to F^*$$
  		$$(\sigma,\tau)\mapsto \frac{g(\chi_{\sigma}^{-1})g(\s\chi_{\tau}^{-1})}{g(\chi_{\sigma\tau}^{-1})},$$
  		where $\displaystyle g(\chi)=\sum_{a=1}^M\chi(a)e^{\frac{2\pi ia}{M}}$ for a Dirichlet character $\chi$ with conductor $M$.
  		From now on, we will always assume that $B_f$ is an elliptic curve without CM, since this is the only case that we will deal with. This means that $F^{\Gamma}=\Q$, $F/\Q$ is abelian and the class of $c$ is trivial in $H^2(F/\Q,F^*)$. The curve $B_f$ is a $\Q$-curve. The number field $L$ is the smallest one over which all endomorphisms of $A_f$ are defined, and $B_f$ is $L$-isogenous to all its Galois conjugates.
  		Since the class of $c$ in $H^2(F/\Q,F^*)$ is trivial, there exists a splitting map $\beta \colon \gal(F/\Q)\to F^*$ such that $\displaystyle c(\sigma,\tau)=\frac{\beta(\sigma)\s\beta(\tau)}{\beta(\sigma\tau)}$. The map $\beta$ is not unique: any other splitting map differs from $\beta$ by a coboundary; if $\beta'$ is another splitting map then for some $a \in F^*$ we have $\beta'(\sigma)=\beta(\sigma)\s a/a$.
 		After having identified $H^0(J_1(N),\Omega^1_{\C})$ with $S_2(\Gamma_1(N))$ by pulling back via the composed map $\mathcal H^* \to X_1(N)\to J_1(N)$, the construction of the variety $A_f$ as a quotient of $J_1(N)$ induces an isomorphism
  		$$H^0(A_f,\Omega^1_{\C})\stackrel{\sim}{\to} \bigoplus_{\sigma\colon F \hookrightarrow \C}\C\cdot  \s f(q)\frac{dq}{q}.$$
  		From now on we will identify these two spaces and $H^0(A_f,\Omega^1_{\C})$ will be regarded as a subspace of $H^0(X_1(N),\Omega^1_{\C})\simeq H^0(J_1(N),\Omega^1_{\C})$.
		Now fix a splitting map $\beta$ for $c$. Then the following theorem holds:
  			\begin{theorem}[{{\cite[Theorem~2.1]{gola}}}]\label{betaend}
    				There exists an endomorphism $w_{\beta}\in \en_L(A_f)$ such that:
    					\begin{enumerate}
      					\item the abelian variety $B=w_{\beta}(A_f)$ is a building block of $A_f$;
      					\item if $\omega_B$ is a generator of $H^0(B,\Omega^1_{\C})$, then $w_{\beta}^*(\omega_B)$ belongs to the subspace of $H^0(A_f,\Omega^1_{\C})$ generated by 
							$$\sum_{\sigma\in \gal(F/\Q)}\frac{g(\chi_{\sigma}^{-1})}{\beta(\sigma)}\s f;$$
      					\item all building blocks are of the form $a(B)$ for $a\in F$.
  				\end{enumerate}
		\end{theorem}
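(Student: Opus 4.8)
The plan is to obtain $w_{\beta}$ as an integral multiple of an explicit idempotent of $\en_L^0(A_f)$ built from the splitting map $\beta$ and the ``twist endomorphisms'' of $A_f$, and then to read the building block and its differential off this idempotent. Note that since $F/\Q$ is abelian and $F^{\Gamma}=\Q$, the group $\Gamma$ coincides with $\gal(F/\Q)$; I use this freely.

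First I would recall, following Ribet \cite{rib2} and Quer \cite{que}, the structure of $\en_L^0(A_f)$. For $\sigma\in\gal(F/\Q)$ the inner-twist relation $\sigma(a_p)=\chi_{\sigma}(p)a_p$ identifies $\s f$ with $f\otimes\chi_{\sigma}$ away from the bad primes; combining the functoriality of Shimura's construction $f\mapsto A_f$ (which provides a $\Q$-isomorphism $A_f\xrightarrow{\sim}A_{\s f}$) with the classical ``twist by a Dirichlet character'' correspondence on $X_1(N)$ --- whose normalisation, when written through the translations $z\mapsto z+a/M$ with $M$ the conductor of $\chi_{\sigma}$, carries the Gauss sum $g(\chi_{\sigma}^{-1})$ as its scaling factor --- produces an element $\lambda_{\sigma}\in\en_L^0(A_f)$ defined over the abelian field $L$ cut out by the $\chi_{\sigma}$. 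The facts to extract are: $\lambda_{\sigma}x=\s x\,\lambda_{\sigma}$ for $x\in F\subset\en_{\Q}^0(A_f)$ and $\lambda_{\sigma}\lambda_{\tau}=c(\sigma,\tau)\lambda_{\sigma\tau}$ with $c$ the Gauss-sum cocycle of the statement, so that $\en_L^0(A_f)=\bigoplus_{\sigma\in\gal(F/\Q)}F\lambda_{\sigma}$ is the crossed product $F\rtimes_c\gal(F/\Q)$; and the action on holomorphic differentials, $\lambda_{\sigma}^*\omega_{\rho}=\rho(g(\chi_{\sigma}^{-1}))\,\omega_{\sigma\rho}$ up to the precise index conventions, where $\{\omega_{\rho}={}^{\rho}\!f\,dq/q\}_{\rho\in\gal(F/\Q)}$ is the basis of $H^0(A_f,\Omega^1_{\C})$ fixed in Section~\ref{buiblo} and $x\in F$ acts on $\omega_{\rho}$ through $\rho(x)$.

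Since $B_f$ is an elliptic curve without CM we have $\en_L^0(A_f)\cong M_n(\Q)$ with $n=[F\colon\Q]$, and the crossed product --- of $\Q$-dimension $n^2$ --- being split is exactly the triviality of $[c]$ in $H^2(F/\Q,F^*)$, which is what the existence of $\beta$ expresses. Put $\epsilon=\sum_{\sigma}\beta(\sigma)^{-1}\lambda_{\sigma}\in\en_L^0(A_f)$. Using the two relations above together with $c(\sigma,\tau)=\beta(\sigma)\s\beta(\tau)/\beta(\sigma\tau)$ one computes
$$\epsilon^2=\sum_{\sigma,\tau}\frac{c(\sigma,\tau)}{\beta(\sigma)\s\beta(\tau)}\,\lambda_{\sigma\tau}=\sum_{\sigma,\tau}\beta(\sigma\tau)^{-1}\lambda_{\sigma\tau}=n\epsilon,$$
so $e=\epsilon/n$ is an idempotent, and a short computation with reduced traces shows that $e$ has rank one in $M_n(\Q)$. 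Choose $m\in\N$ with $m\epsilon\in\en_L(A_f)$ and set $w_{\beta}=m\epsilon$; then $B=w_{\beta}(A_f)=e(A_f)$ satisfies $\en_L^0(B)\cong e\cdot M_n(\Q)\cdot e\cong\Q$, so $B$ is an elliptic curve, $L$-isogenous to $B_f$ and in particular without CM. Moreover, since $A_f$ is defined over $\Q$, for every $\nu\in\gal(L/\Q)$ we have $\n B=\n e(A_f)$ with $\n e$ again a rank-one idempotent of $M_n(\Q)$, whence $\n B\sim_L B$; thus $B$ is a building block of $A_f$, proving~(1). For~(3), any building block is an elliptic-curve quotient $e'(A_f)$ with $e'$ a rank-one idempotent; identifying the minimal left ideals of $M_n(\Q)$ with the $\Q$-lines of the column space, which is a free rank-one module over the maximal subfield $F$, every such line lies in the single $F^*$-orbit of the line defining $e$, whence $e'(A_f)=a(B)$ for some $a\in F^*$.

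For~(2), factor $w_{\beta}=\iota\circ q$ with $q\colon A_f\to B$ surjective and $\iota\colon B\to A_f$; then $w_{\beta}^*\omega_B=q^*\omega_B$ spans the line $q^*H^0(B,\Omega^1_{\C})=e^*H^0(A_f,\Omega^1_{\C})$. Applying $e^*$ to the basis element attached to the identity embedding, namely $f\,dq/q$, and invoking $\lambda_{\sigma}^*\omega_{\rho}=\rho(g(\chi_{\sigma}^{-1}))\,\omega_{\sigma\rho}$ together with the fact that $F$ acts on $\omega_{\rho}$ through $\rho$, one finds that this line equals $\C\cdot\sum_{\sigma\in\gal(F/\Q)}g(\chi_{\sigma}^{-1})\beta(\sigma)^{-1}\,\s f$, the asserted generator. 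The step carrying the real content is the explicit action of the twist endomorphisms on $H^0(A_f,\Omega^1_{\C})$ with the correct Gauss-sum normalisation: this forces one to push the twist-by-character correspondence on $X_1(N)$ through Shimura's quotient construction while carefully tracking the $1$-cocycle $\sigma\mapsto\chi_{\sigma}$; once that and the crossed-product description of $\en_L^0(A_f)$ are in hand, the idempotent identity and the computation of the differential are formal.
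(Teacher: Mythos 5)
This statement is imported verbatim from \cite{gola} (their Theorem~2.1) and the paper gives no proof of it, so there is nothing internal to compare against; your sketch in fact reconstructs the standard argument of that reference (and of Ribet and Quer before it): the crossed-product description $\en_L^0(A_f)=\bigoplus_\sigma F\lambda_\sigma$ via the twisting operators, the idempotent $\tfrac{1}{n}\sum_\sigma\beta(\sigma)^{-1}\lambda_\sigma$ obtained from the splitting map, and the Gauss-sum action on $H^0(A_f,\Omega^1_\C)$. The computation $\epsilon^2=n\epsilon$ and the identification of $\epsilon^*\omega_1$ with $\sum_\sigma g(\chi_\sigma^{-1})\beta(\sigma)^{-1}\,{}^\sigma\!f$ are correct as written; the only part you defer --- the precise normalisation $\lambda_\sigma^*\omega_\rho=\rho(g(\chi_\sigma^{-1}))\,\omega_{\sigma\rho}$ coming from pushing the twist-by-character correspondence through Shimura's construction --- is indeed where the real work lies, and it is exactly what \cite{gola} supplies.
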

  		Let 
  		$$\lambda=\sum_{\sigma \in\gal(F/\Q)}\frac{g(\chi_{\sigma}^{-1})}{\beta(\sigma)}\in \C.$$
  		This quantity is nonzero (see \cite[Lemma 3.1]{gola}). Then the normalized cuspform attached to $(E,\pi)$ is 
  		$$h_{w_{\beta}}\coloneqq \frac{1}{\lambda}(w_{\beta}^*(\omega))\coloneqq \sum_{n=1}^{+\infty}\lambda_nq^n\in S_2(\Gamma_1(N)).$$
  		It is proved in \cite{gola} that $\lambda_n\in L$ for all $n$.

	\section{Quadratic $\Q$-curves}\label{quacur}
			\begin{definition}
				A \emph{quadratic $\Q$-curve} is a $\Q$-curve over a quadratic number field.
		\end{definition}
  		From now on, $E$ will denote a quadratic $\Q$-curve without CM completely defined over $K=\Q(\sqrt{d})$, for $d$ a square-free integer different from $1$. Let $\Delta_K$ be the discriminant of $K$, let $\gal(K/\Q)\coloneqq \{1,\nu\}$ and let $\mu\colon E\to {}^{\nu}E$ be a $K$-isogeny. Finally, let $\n\mu\mu$ coincide with multiplication by $m\in \Z$.
		\begin{lemma}[Serre]
			If $\Delta_K<0$, then $m>0$.
		\end{lemma}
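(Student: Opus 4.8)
The plan is to convert the relation $\n\mu\,\mu=[m]$ into an identity about invariant differentials, in which passing from $\mu$ to its Galois conjugate $\n\mu$ replaces the scalar attached to $\mu$ by its $K/\Q$-conjugate; the composite then corresponds to multiplication by a norm $N_{K/\Q}(c)$, and the whole point of the hypothesis $\Delta_K<0$ is that such a norm is positive.

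First I would fix a nonzero invariant differential $\omega$ on $E$, defined over $K$. Since the space of invariant differentials of an elliptic curve over a field is one-dimensional over that field and $\n\omega$ is a nonzero invariant differential on $\n E$, pulling back along $\mu$ gives $\mu^*(\n\omega)=c\,\omega$ for a unique $c\in K$; because $\mu$ is an isogeny in characteristic zero, hence separable, its pullback on differentials is injective, so $c\in K^*$. Next I would apply $\nu$ to this identity: it sends $\mu$ to $\n\mu\colon\n E\to E$ (using $\nu^2=\id$), the differential $\n\omega$ on $\n E$ to $\omega$ on $E$, the differential $\omega$ to $\n\omega$, and the scalar $c$ to $\n c$; hence $(\n\mu)^*(\omega)=\n c\cdot\n\omega$. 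Composing the two identities,
\[
m\,\omega \;=\; [m]^*\omega \;=\; (\n\mu\,\mu)^*\omega \;=\; \mu^*\bigl((\n\mu)^*\omega\bigr) \;=\; \mu^*(\n c\cdot\n\omega) \;=\; c\,\n c\cdot\omega \;=\; N_{K/\Q}(c)\cdot\omega,
\]
so that $m=N_{K/\Q}(c)$.

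To finish, I would use that $K$ is imaginary quadratic: viewing $K$ inside $\C$, the automorphism $\nu$ is complex conjugation, so $N_{K/\Q}(c)=c\bar c=|c|^2>0$, and therefore $m>0$. I do not expect a real obstacle: the argument is short, and the only points requiring attention are bookkeeping — that $\mu^*$ is $K$-linear, so the constant $c$ genuinely lies in $K$; that $\nu^2=\id$ makes the conjugate of $\n\omega$ equal to $\omega$; and that separability in characteristic zero guarantees $c\neq 0$. It is worth recording where the hypothesis enters: over a real quadratic field $N_{K/\Q}(c)$ can be negative, so no sign conclusion is available there, consistent with the statement being specific to $\Delta_K<0$. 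As a consistency check, comparing degrees gives $m^2=\deg(\n\mu\,\mu)=(\deg\mu)^2$, whence $|m|=\deg\mu$, and the computation above pins the sign down to $m=\deg\mu$.
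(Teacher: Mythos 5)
Your proof is correct and rests on the same computation as the paper's: the paper fixes an embedding $K\hookrightarrow\C$, writes $E=\C/\Lambda$, identifies $\mu$ with multiplication by a complex number $\alpha$ (so that $\n\mu$ is multiplication by $\overline{\alpha}$) and concludes $m=\alpha\overline{\alpha}>0$, which is exactly your identity $m=c\,\n c=|c|^2$ phrased via the uniformization instead of via the action on invariant differentials. The only cosmetic difference is that you extract the scalar from the cotangent space and invoke Galois equivariance of pullback, whereas the paper extracts the same scalar from the lattice picture; both hinge on $\nu$ acting as complex conjugation when $\Delta_K<0$.
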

  		\begin{proof}
			Fix an embedding $K\hookrightarrow\C$, so that $\nu$ is the restriction of complex conjugation to $K$. If $\Lambda\sub \C$ is a lattice uniformizing $E$, the conjugate curve $\n E=\overline{E}$ is uniformized by $\overline{\Lambda}$. The map $\mu_{\nu}$ can be identified with multiplication on $\C$ by some complex $\alpha$. Thus $\n\mu$ is multiplication by $\overline{\alpha}$, and therefore $m=\alpha\overline{\alpha}>0$.
		\end{proof}
		We will exhibit in section \ref{exs} explicit examples of $\Q$-curves completely defined over real quadratic fields with positive and negative $m$, and of $\Q$-curves completely defined over imaginary quadratic fields, which necessarily have positive $m$.
		
  		Let $B=\res_{K/\Q}(E)$ denote the restriction of scalars of $E$. This is an abelian surface defined over $\Q$, so either $B$ is isogenous to a product of two elliptic curves, or it is a $\Q$-simple abelian variety. In the latter case, thanks to Theorem \ref{Qcurvesaremodular}, it follows that $B$ is isogenous to $A_f$ for some newform $f$ whose Fourier coefficients generate a quadratic field. It is clear from the proof of the theorem that $\en_{\Q}^0(B)$ is isomorphic to $\Q[x]/(x^2-m)$, so that $B$ is simple over $\Q$ precisely when $m$ is not a square. On the other hand, $m$ is a square precisely when $E$ is $K$-isogenous to an elliptic curve $E_0$ over $\Q$. In this case, it is well-known that
		$$L(E/K,s)=L(E_0/\Q,s)\cdot L(E_0^{(d)}/\Q,s),$$
		where $E_0^{(d)}$ denotes the quadratic twist of $E_0$ by $d$, and therefore
		we have that
		$$L(E/K,1)=L(E_0/\Q,1)\cdot L(E_0^{(d)}/\Q,1).$$
		Thus, in this case we can use the method of section \ref{curvesoverQ} to get an analogue for \eqref{stima}.
  			\begin{example}\label{example_over_Q}
      			To get an example of such a situation, start with an elliptic curve $E/\Q$ without CM such that the group $E(\Q)[2]$ has order $2$. Let $P$ be its generator. Then the other two $2$-torsion points $P_1,P_2$ will be defined over some quadratic extension $K/\Q$. Now let $\phi_i$ be the isogeny with kernel $\{O,P_i\}$ for $i=1,2$ and let $E_i=E/\ker \phi_i$. The curves $E_1,E_2$ are defined over $K$ and $E_2=\s E_1$, since $\ker \phi_1$ and $\ker\phi_2$ are Galois conjugate one to each other. Clearly there is an isogeny $\phi=\phi_2\circ\widehat{\phi_1}\colon E_1\to E_2$ which has degree $4$ and is defined over $K$. Thus the curve $E_1$ is a $\Q$-curve defined over $K$ but isogenous to an elliptic curve defined over $\Q$, namely $E$.
      			For an explicit example, consider the elliptic curve $E\colon y^2=(x-1)(x^2+1)$. One checks that $j(E)=128$, thus $E$ has no CM. Using the notation above we have $P=(1,0)$, $P_1=(i,0)$ and $P_2=(-i,0)$. Then $E_1$ is the elliptic curve defined over $\Q(i)$ with equation
				$$E_1\colon y^2=x^3-x^2+(10i+11)x+6i-23.$$
		\end{example}

		\section{The newform attached to $E$}\label{newform}
		From now on, we will assume that $m$ is not a perfect square. Let
		$$f=\sum_{n=1}^{+\infty}a_nq^n \in S_2(\Gamma_1(N),\varepsilon)$$
		be a newform such that $\res_{K/\Q}(E)=B$ is isogenous to $A_f$. The number field generated by the $a_n$'s will be denoted by $F=\Q(\sqrt{m})$. Note that all endomorphisms of $A_f$ are defined over $K$ because $\mu$ itself is, so $\en_{\OQ}^0(A_f)=\en_{K}^0(A_f)$. On the other hand, since $\en_{\Q}^0(A_f)\simeq F$, it follows that $K$ is the splitting field of $f$.
  		Following the convention of \cite{gola}, we will implicitly fix an embedding of $F$ into $\C$. Let $\gal(F/\Q)=\{1,\sigma\}$. Then there exists a unique Dirichlet character $\chi$ such that $\s a_p=\chi(p)a_p$ for all primes $p\nmid N$ (see \cite{gola}).
  		The field $K$ equals $\OQ^{\ker \chi}$, which implies that $\chi$ is the primitive quadratic character corresponding to $K$ via class field theory. This means that if $\Delta_K$ is the discriminant of $K$ then $\chi$ is the primitive quadratic character modulo $|\Delta_K|$ given by:
    		$$\chi(p)=\begin{cases}
				1 & \mbox{if $p$ splits in $K$} \\
				-1 & \mbox{if $p$ is inert in $K$}\\
				0 & \mbox{if $p$ ramifies in $K$.}\\
		\end{cases}$$
  		Now recall that for the coefficients of $f$ it holds (see for example \cite{rib3}) that $a_p=\overline{a_p}\varepsilon(p)$ for all primes $p\nmid N$.
  		If $F$ is quadratic real, $\varepsilon$ must be trivial since $f$ does not have CM.
  		If $F$ is quadratic imaginary, we have $\s a_p=\chi(p)a_p$ but $\s a_p=\overline{a_p}$ and therefore we get $\chi=\varepsilon^{-1}=\varepsilon$ as characters modulo $N$. Of course $\varepsilon$ needs not to be primitive, but it is defined modulo $N$ and $d_{K/\Q}$ divides $N$ (see equation \eqref{conductor} below). Thus $\varepsilon$ is just the composition of $\chi$ with the projection $(\Z/N\Z)^*\to (\Z/|\Delta_K|\Z)^*$.
	    In order to find the $q$-expansion of $f$, we will look at local factors of the $L$-functions.

		We remark that when $N$ is a prime congruent to $1$ modulo $4$, $\chi$ is the Legendre symbol modulo $N$ and $\displaystyle\Gamma_{\chi}(N)=\left\{\barr{cc}a & b\\c & d\narr\in \Gamma_0(N)\colon \chi(d)=1\right\}$, in \cite[section 5]{cremona} the author systematically computes $q$-expansions of newforms in $S_2(\Gamma_{\chi}(N))$ whose Fourier coefficients generate a quadratic field. The abelian surface attached to such a newform splits over $\Q(\sqrt{N})$ as a product of two $\Q$-curves with everywhere good reduction. Even though there are some similarities between our computations and those of \cite{cremona}, our result is different in nature, since we start with a $\Q$-curve and then compute the Fourier coefficients of the attached newform, while in \cite{cremona} the author starts with a newform and then computes a corresponding $\Q$-curve. Moreover, we have no assumptions on $N$.

\subsection{Local factors of $L$-functions}

		Let $\lambda$ be a finite place of~$F$, and let $l$ be its residue characteristic.  Let $V_l$ be the $l$-adic Tate module of $A_f$; this is a free module of rank~2 over the ring $\Q_l\otimes_\Q F$ with an action of $G_\Q$.  We consider
		$$V_\lambda = F_\lambda\otimes_{\Q_l} V_l;$$
		this is a $2$-dimensional $F_\lambda$-linear representation of~$G_\Q$.

		Let $p$ be a prime number different from~$l$, and let $D_p$ and $I_p$ be a decomposition group at~$p$ and the corresponding inertia group, respectively.  Let $(V_\lambda)_{I_p}$ be the space of coinvariants of $V_\lambda$ under~$I_p$.

		The $L$-factor of $f$ at~$p$ is of the form
		$$L_p(f,s) = P_p(f, p^{-s})$$
		where
		$$P_p(f, x) = 1 - a_p x + \varepsilon(p) p x^2 \in F[x].$$
		and we let $\varepsilon(p)=0$ if $p\mid N$.
		Then we have
		\begin{equation}
		\label{charpoly}
		\det_{F_\lambda}({\id} - x\cdot\fr_p \mid (V_\lambda)_{I_p}) = P_p(f,x)
		\end{equation}
		(see for example \cite[Theorem 4]{roh} and \cite{car86}).

		On the other hand, for every prime ideal $\p\sub \mathcal O_K$ lying over a rational prime $p$ and every prime $l\neq p$, the absolute Galois group $G_K\coloneqq \gal(\overline{K}/K)$ acts on the $l$-adic Tate module $V_l$ of $E$ yielding a $2$-dimensional $l$-adic Galois representation of $G_K$. Let $D_{\p}\sub G_K$ denote a decomposition group at $\p$, $I_{\p}$ the corresponding inertia subgroup and $\fr_{\p}\in D_{\p}$ any Frobenius element at $\p$. Then if $E$ has good reduction at $\p$ the characteristic polynomial of $\fr_{\p}$ is given by
		$$P_{\p}(E,x)=1-c_{\p}x+N_{K/\Q}(\p)x^2\in \Z[x],$$
		where $c_{\p}=N_{K/\Q}(\p)+1-|E(\F_{\p})|$. If $E$ has bad reduction at $\p$, we set
		$$P_{\p}(E,x)=\begin{cases}1 & \mbox{if $E$ has additive reduction}\\1-x & \mbox{if $E$ has split multiplicative reduction}\\1+x &\mbox{if $E$ has non-split multiplicative reduction.}\end{cases}$$
		The $L$-factor at $\p$ is given by
		$$L_{\p}(E,s)=P_{\p}(E,N_{K/\Q}(\p)^{-s}).$$
		The following equation holds for all $\p$:
		$$\det_{\Q_l}({\id}-x\cdot \fr_{\p}\mid (V_l)_{I_{\p}})=P_{\p}(E,x).$$
		By \cite[Proposition~3]{mil1} and \cite[Theorem~5]{roh}, the following equalities hold for all rational primes $p$:
		\begin{equation}\label{Lfuncts}
			\prod_{\p\mid p}L_{\p}(E/K,s)=L_p(A_f/\Q,s)=\prod_{\vartheta \in \gal(F/\Q)}L_p({}^{\vartheta\!}f,s),
		\end{equation}
		where $L_p(A_f/\Q,s)$ is the local factor at $p$ of $L$-function attached to the Galois representation on the $l$-adic Tate module of $A_f$.

		The equality \eqref{Lfuncts} will be used to compute the $a_p$'s. In order to do that, we will separately analyze primes of good and bad reduction for $E$.
		According to \cite[Proposition~1]{mil1}, if $\mathcal N_{K}(E)$ is the conductor of $E$, then one has that
		$$N_{K/\Q}(\mathcal N_{K}(E))\Delta_K^2=\mathcal N_{\Q}(\res_{K/\Q}(E))$$
		and combining this with the fact that $\mathcal N_{\Q}(A_f)=N^2$ (see \cite{car}) we get the following formula:
		  \begin{equation}\label{conductor}
			  N_{K/\Q}(\mathcal N_{K}(E))\Delta_K^2=N^2.
		\end{equation}
	  Therefore primes of bad reduction for $A_f$ are exactly primes lying under primes of bad reduction for $E$ and primes which ramify in $K$.
		  \begin{lemma}\label{conductor2}
			    The conductor of $E$ is a principal ideal generated by an integer. Moreover, $E$ has bad reduction at a prime $\p$ if and only if it has bad reduction at $\n\p$.
		\end{lemma}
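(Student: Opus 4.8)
The plan is to deduce both claims from the relation \eqref{conductor} together with the compatibility of the conductor with the Galois action. First I would observe that the abelian surface $B=\res_{K/\Q}(E)$ carries an action of $\gal(K/\Q)$, and since $E$ is completely defined over $K$, the isogeny $\mu\colon E\to{}^\nu E$ is defined over $K$; applying $\nu$ to $\mu$ shows that $E$ and ${}^\nu E$ are $K$-isogenous. Isogenous elliptic curves over $K$ have the same conductor, so $\mathcal N_K(E)=\mathcal N_K({}^\nu E)={}^\nu(\mathcal N_K(E))$ as ideals of $\mathcal O_K$, the last equality because forming the conductor commutes with the automorphism $\nu$ of $\mathcal O_K$. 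Hence $\mathcal N_K(E)$ is a $\nu$-stable ideal of $\mathcal O_K$. A $\nu$-stable ideal of $\mathcal O_K$ is a product of powers of inert primes, ramified primes, and pairs $\p\n\p$ of split primes; one checks directly that each such generator is the extension to $\mathcal O_K$ of an ideal of $\Z$ (namely $p\mathcal O_K$, $\sqrt{\Delta_K}\,\mathcal O_K$ up to units, and $p\mathcal O_K$ respectively), so $\mathcal N_K(E)=n\mathcal O_K$ for some positive integer $n$. This also proves the ``moreover'' statement: $E$ has bad reduction at $\p$ exactly when $\p\mid\mathcal N_K(E)$, and since $\mathcal N_K(E)$ is $\nu$-stable, $\p\mid\mathcal N_K(E)$ iff $\n\p\mid\mathcal N_K(E)$.

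Alternatively, and perhaps more cleanly, the second claim is immediate from the fact that $E$ and $\n E$ are $K$-isogenous: bad reduction at $\p$ is a property of the $K$-isogeny class at $\p$, and $\n E$ has bad reduction at $\p$ iff $E$ has bad reduction at $\n\p$ (apply $\nu$ to the reduction data). Combining these two observations gives the equivalence. For the first claim one can then argue directly that $\mathcal N_K(E)$ is $\nu$-invariant and invoke the description of $\nu$-invariant ideals above; note that equation \eqref{conductor} provides an independent sanity check, since $N_{K/\Q}(\mathcal N_K(E))\Delta_K^2=N^2$ forces $N_{K/\Q}(\mathcal N_K(E))$ to be a square, consistent with $\mathcal N_K(E)=n\mathcal O_K$ giving $N_{K/\Q}(n\mathcal O_K)=n^2$.

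I do not expect any serious obstacle here: the only mild subtlety is making precise that the conductor commutes with the Galois automorphism $\nu$, which follows from the fact that the Néron model (and hence the local conductor exponents) base-changes compatibly along the isomorphism $\spec\mathcal O_K\to\spec\mathcal O_K$ induced by $\nu$, sending the place $\p$ to $\n\p$. The structure of $\nu$-stable ideals of $\mathcal O_K$ is elementary. Thus the whole argument is short: establish $\nu$-invariance of $\mathcal N_K(E)$ from the $K$-isogeny $E\sim{}^\nu E$, read off that such an ideal is generated by a rational integer, and read off simultaneously the statement about bad primes.
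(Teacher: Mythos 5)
Your argument for the second assertion (bad reduction at $\p$ iff at $\n\p$) is fine and is essentially the paper's own argument for split primes: $E$ and $\n E$ are $K$-isogenous, hence have the same conductor, and conductor formation commutes with $\nu$, so $\mathcal N_K(E)$ is $\nu$-stable. However, your deduction of the first assertion from $\nu$-stability alone has a genuine gap at the ramified primes. It is \emph{not} true that a $\nu$-stable ideal of $\mathcal O_K$ is generated by a rational integer: if $\p$ is ramified, then $\n\p=\p$, so any power $\p^r$ is $\nu$-stable, yet $\p^r$ is the extension of an ideal of $\Z$ only when $r$ is even (the extensions $n\mathcal O_K$ have even valuation at every ramified prime, since $\p^2=(p)$). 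Your parenthetical ``$\sqrt{\Delta_K}\,\mathcal O_K$ up to units'' makes the error visible: $\sqrt{\Delta_K}$ is not a rational integer, and an elliptic curve over $K$ can perfectly well have odd conductor exponent at a ramified prime (e.g.\ multiplicative reduction there), in which case the conductor is $\nu$-stable but not of the form $n\mathcal O_K$.

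The missing ingredient is exactly the one you demoted to a ``sanity check'': equation \eqref{conductor} gives $N_{K/\Q}(\mathcal N_K(E))\Delta_K^2=N^2$, so $N_{K/\Q}(\mathcal N_K(E))$ is a square in $\Q$. Since the contributions to this norm from inert primes and from $\nu$-paired split primes are automatically squares, the exponent of each ramified prime $\p$ in $\mathcal N_K(E)$ must be even, say $2k$, whence $\p^{2k}=(p^k)$. This is how the paper argues, and it is not a consistency check but the essential input; indeed the paper later relies on this evenness to conclude that the reduction at ramified primes of bad reduction is additive. To repair your proof, promote the use of \eqref{conductor} from remark to argument and restrict the claim ``$\nu$-stable generators are extended from $\Z$'' to the inert and split cases.
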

	  \begin{proof}
	    Let $\mathcal N_K(E)=\p^rI$, where $\p$ is a prime ideal of $K$, $r\in \N$ and $I$ is an ideal of $K$ which is coprime with $\p$. We will show that either $\p^r$ is a principal ideal generated by $p^k$ for some $k$, where $p$ is the rational prime lying under $\p$, or $\n \p^r$ exactly divides $I$.

	    Suppose that $\p$ lies above a ramified rational prime $p$. Then $N_{K/\Q}(\p)=p$. Since $\p$ is the only prime lying above $p$ and equation \eqref{conductor} implies that $N_{K/\Q}(\mathcal N_{K}(E))\Delta_K^2$ must be a square in $\Z$, this means that $\p$ has to divide the conductor of $E$ an even number of times, say $2k$ times. This implies $r=2k$ and $\p^r=(p^k)$.

	    Suppose now that $\p$ lies over an inert prime $p$. This amounts to saying that $\p=(p)$, implying that $\p^r=(p)^r$.

	    Finally, suppose that $\p$ lies over a split prime $p$. Since $E$ is $K$-isogenous to $\n E$, the two curves have the same conductor. But $\mathcal N_K(\n E)=\n\mathcal N_K(E)$ and this implies that $\n\p^r$ exactly divides $\mathcal N_K(\n E)$ and hence also $\mathcal N_K(E)$, concluding the proof.
	\end{proof}
	Thanks to the above lemma, by a small abuse of notation we can say that $E$ has bad (good) reduction at a rational prime $p$.
	
	  \subsection{Primes of good reduction for $E$}
	  Let $p$ be a prime of good reduction for $E$. Equation \eqref{Lfuncts} becomes:
	  \begin{equation}\label{goodred}
			    \prod_{\p\mid p}(1-c_{\p}N(\p)^{-s}+N(\p)^{1-2s})=(1-a_pp^{-s}+\varepsilon(p)p^{1-2s})(1-\s a_pp^{-s}+\s\varepsilon(p)p^{1-2s}).
		\end{equation}
	  \subsubsection*{Ramified case}
	  Suppose $p$ is a rational prime ramified in $K$. In this case, $p$ divides $N$ because of \eqref{conductor}, and equation \eqref{goodred} therefore becomes
	  $$1-c_{\p}p^{-s}+p\cdot p^{-2s}=1-(a_p+{}^{\sigma}a_p)p^{-s}+(a_p\cdot\s a_p)p^{-2s},$$
	  where $\p$ is the unique prime of $K$ lying over $p$. We then have
	  $$\begin{cases}
      a_p+{}^{\sigma}a_{p}=c_{\p} & \\
      a_p\cdot{}^{\sigma}a_p=p, & \\
	\end{cases}$$
	  implying
	  $$a_p=\frac{c_{\p}\pm\sqrt{c_{\p}^2-4p}}{2}.$$
	  In particular, $c_{\p}-4p$ is a square in $F$.
	  \subsubsection*{Inert case}
	  If $p$ is inert in $K$ and $\p$ is the unique prime lying above it, we get
		$$1-c_{\p}p^{-2s}+p^{2-4s}=1-(a_p+{}^{\sigma}a_p)p^{-s}+(2\varepsilon(p)p+a_p\cdot \s a_p)p^{-2s}+$$
	  $$+(a_p+{}^{\sigma}a_p)p\cdot p^{-3s}+p^{2-4s},$$
	  which leads to the following system:
	  $$\begin{cases}
      a_p+{}^{\sigma}a_p=0 & \\
      2\varepsilon(p)p+a_p\cdot {}^{\sigma}a_p=-c_{\p}. & \\
	\end{cases}$$
	  Thus $a_p=\pm\sqrt{c_{\p}+2\varepsilon(p)p}$ and in particular $c_{\p}+2\varepsilon(p)p$ is a square in $F$. Here $\varepsilon(p)=1$ if $F$ is real and $\varepsilon(p)=-1$ if $F$ is imaginary.
	  \subsubsection*{Split case}
	  If $p$ is split in $K$, then $\varepsilon(p)=1$, there are two primes $\p_1,\p_2$ lying over $p$ and $\p_2={}^{\nu}\p_1$. Let $l$ be a prime different from $p$. Since $E$ and $\n E$ are isogenous, the two Tate modules $T_l(E)$ and $T_l(\n E)$ are isomorphic as $G_K$-modules. This shows that $c_{\p_i}(E)=c_{\p_i}(\n E)$ for $i=1,2$. Now we claim that $c_{\p_1}(E)=c_{\p_2}(\n E)$. To see this, let $\overline{\nu}\in \GQ$ be any lift of $\nu$, and let
	  $$c_{\overline{\nu}}\colon G_K\to G_K$$
	  $$\tau\mapsto \overline{\nu}\tau\overline{\nu}^{-1}$$
	  be the conjugation by $\overline{\nu}$. This is a well-defined homomorphism because $G_K$ is normal in $\GQ$.
	  Now let
		$$\varphi_{\overline{\nu}}\colon \aut(T_l(E))\to \aut(T_l(\n E))$$
		$$f\mapsto (x\mapsto {}^{\overline{\nu}}f(\leftidx{^{\overline{\nu}^{-1}}\!}x)).$$
	Then it is easy to check that the following diagram commutes:
	  $$\begin{xymatrix}{
			   G_K\ar[r]^{c_{\nu}}\ar[d] & G_K \ar[d]\\
				\aut(T_l(E))\ar[r]^{\varphi_{\overline{\nu}}} & \aut(T_l(\n E))
			  }\end{xymatrix}$$
	where the two vertical arrows are the usual $l$-adic representations of $G_K$. If $\fr_{\p_i}\in G_K$ is a Frobenius at $\p_i$ for $i=1,2$, it is clear that $c_{\overline{\nu}}(\fr_{\p_1})=\fr_{\p_2}$. On the other hand, if one chooses a $\Z_l$-basis $\{\mathbf{e_1},\mathbf{e_2}\}$ for $T_l(E)$, then $\{{}^{\overline{\nu}\!}\mathbf{e_1},{}^{\overline{\nu}\!}\mathbf{e_2}\}$ is a $\Z_l$-basis for $T_l(\n E)$ and the map $\varphi_{\overline{\nu}}$ written with respect to these bases is just the identity. This shows that the characteristic polynomial of $\fr_{\p_1}$ acting on $T_l(E)$ coincides with the characteristic polynomial of $\fr_{\p_2}$ acting on $T_l(\n E)$, and the claim follows.

	 By the discussion above, we have that $c_{\p_1}(E)=c_{\p_2}(E)$, so that we can just write $c_{\p}$ for that. Equation \eqref{goodred} reads:
	  $$(1-c_{\p}p^{-s}+p^{1-2s})^2=(1-a_pp^{-s}+p^{1-2s})(1-{}^{\sigma}a_pp^{-s}+p^{1-2s}),$$
	  leading to
	  $$\begin{cases}
      a_p+{}^{\sigma}a_p=2c_{\p} & \\
      a_p\cdot \s a_p=c_{\p}^2. & \\
	\end{cases}$$
	  Therefore $a_p=c_{\p}$ and $\s a_p=a_p$.

	  \subsection{Primes of bad reduction for $E$}

	Let $p$ be a prime of bad reduction for $E$. Then $\varepsilon(p)=0$. Equation \eqref{Lfuncts} becomes:
	  \begin{equation}\label{badred}
	    \prod_{\p\mid p}(1-c_{\p}N(\p)^{-s})=\prod_{\sigma\in\gal(F/\Q)}(1-{}^{\sigma}a_pp^{-s}),
	\end{equation}
	  where $c_{\p}= 1,-1,0$ if $E$ has split multiplicative, non-split multiplicative or additive reduction at $\p$, respectively.

	  \subsubsection*{Ramified case}
	  Let $\p$ be the unique prime lying above $p$. In the proof of Lemma \ref{conductor2}, we showed that $\p$ has to divide the conductor of $E$ an even number of times, and so the reduction at $\p$ must be additive. This implies $c_{\p}=a_p=0$.
	  \subsubsection*{Inert case}
	  Let $p$ be inert in $K$ and let $\p$ be the unique prime lying above $p$. Then
	  $$1-c_{\p}p^{-2s}=1-(a_p+{}^{\sigma}a_p)p^{-s}+a_p\cdot \s a_pp^{-2s}.$$
	  Therefore $a_p=c\sqrt{m}$ for some $c\in \Z$ and $c^2m=c_{\p}$. Since $|c_{\p}|\leq 1$, if $|m|>1$, then we must have $c_{\p}=a_p=c=0$. Otherwise, namely if $m=-1$, we must have either $c_{\p}=a_p=c=0$ or $c_{\p}=-1$, $c\in\{\pm1\}$ and $a_p=c\sqrt{-1}$.
	  \subsubsection*{Split case}
	  Let $\p_1,\p_2$ be the primes lying above $p$. The same argument we used for the split case applies again, just noticing that since $T_l(E)\simeq T_l(\n E)$ as $G_K$-modules, we have $T_l(E)_{I_{\p_1}}\simeq T_l(\n E)_{I_{\p_1}}$ as $D_{\p_1}$-modules, where $D_{\p_1}\sub G_K$ is any decomposition group for $\p_1$. Therefore we get $c_{\p_1}=c_{\p_2}$ and consequently
	  $$1-2c_{\p_1}p^{-s}+c_{\p_1}^2p^{-2s}=1-2a_pp^{-s}+\s a_p\cdot a_pp^{-2s},$$
	  so that $a_p=c_{\p_1}=c_{\p_2}$.

\subsection{Finding the sign of the $a_p$'s}

		Now given $E$, we have to decide for each $p$ which coefficient to choose between $a_p$ and $\s a_p$ when $p$ is a ramified or inert prime of good reduction for $E$ or, when $m=-1$, $p$ is inert and $E$ has non-split multiplicative reduction at $p$. The ambiguity arises from the fact that, unlike in the setting of elliptic curves over $\Q$ to which we can associate a unique newform, here we associate to $E$ a pair of conjugate newforms, which a priori we cannot distinguish one from another. The object we can easily compute starting from $E$ is a family of pairs $\{(a_p,\s a_p)\}_p$. We will see that the choice of a square root of $m$ will allow us to pick, for every prime $p$, exactly one between $a_p$ and $\s a_p$ so that the collection of all the picked coefficients will coincide with the collection of the Fourier coefficients of prime index of a newform $f$. Choosing the other square root of $m$ will give us the Fourier coefficients of $\s f$. To start, recall that $F=\Q(a_n\colon n\in \N)=\Q(\sqrt{m})$ acts on $B$, where for every prime $p$ the coefficient $a_p$ acts as $T_p$ does. Note that $\mu\colon E\to \n E$ induces an endomorphism $\mu_*\colon B\to B$ such that $(\n\mu)_*\circ\mu_*=m$. Furthermore, we have $(\n\mu)_*=\mu_*$. Therefore the choice of a square root of $m$ induces an inclusion $\Z[\mu_*]\sub F$. From now on, we fix such an embedding. This will correspond to the choice of one newform in the Galois orbit of $f$. Now note that the ring $\en_K(B_K)$ can be identified with the ring $\displaystyle \barr{cc} \en_K(E) & \Hom_K(\n E, E)\\ \Hom_K(E,\n E) & \en_K(\n E)\narr$, whose elements are $2\times 2$ matrices $(a_{ij})$ whose entries lie in the corresponding set of isogenies; for example, $a_{11}\in \en_K(E)$. Under this identification, the subring $\Z[\mu_*]$ corresponds to the subring $\displaystyle \Z\cdot \barr{cc} 0 & \n\mu\\ \mu & 0\narr$.

		Before starting to analyze each problematic case, let us recall two fundamental facts. If $l$ is any prime, then:
		\begin{enumerate} 
			\item[i)] there is a canonical isomorphism of $\Z_l[\GQ]$-modules
				\begin{equation} \label{indrep}
					T_l(B)\simeq T_l(E)\otimes_{\Z_l[G_K]}\Z_l[\GQ]
			\end{equation}
			(see \cite{mil1});
			\item[ii)] let $p$ be a prime different from $l$. Then there is an isomorphism of $\gal(\overline{\F_p}/\F_p)$-modules
				\begin{equation}\label{inertia-invariants}
				(T_l(B))^{I_p}\to T_l(B_{\F_p})
			\end{equation}
			(see for example \cite{serta}).
		\end{enumerate}

\subsubsection*{Inert primes of good reduction}

		 Let $p$ be an inert prime of good reduction for $E$, and let $\p$ be the unique prime of $K$ lying above $p$. In view of \eqref{conductor}, $B$ has good reduction at $p$. Now fix a prime $l\neq p$. Then the Tate module $T_l(B)$ is unramified at $p$. Since $F$ acts $\Q_l$-linearly on $T_l(B)$, the Tate module is also a $F\otimes \Q_l$-module, and one can show that $T_l(B)$ has dimension $2$ as an $F\otimes \Q_l$-module (see for example \cite{dis}). Moreover, any Frobenius at $p$ satisfies the equation
		 $$x^2+a_px+\varepsilon(p)p=0$$
		in $\en_{F\otimes \Q_l}(T_l(B))$, where $a_p$ is viewed as an endomorphism of $T_l(B)$. Since $T_l(B)$ is isomorphic to $T_l(B_{\mathbb F_p})$ as a $\gal(\overline{\F_p}/\F_p)$-module, any Frobenius at $p$ must satisfy the same equation in $\en_{F\otimes\Q_l}(T_l(B_{\F_p}))$. Now note that the Frobenius at $p$ acts on $B_{\F_p}$ as the matrix $\barr{cc}0 & \n\fr_p\\ \fr_p & 0\narr$, where $E_{\p}$ is the reduction of $E$ modulo $\p$ and $\fr_p\colon E_{\p}\to \n E_{\p}$ maps $(x,y)$ to $(x^p,y^p)$ and $\n\fr_p\colon \n E_{\p}\to E_{\p}$ is defined analogously. From the computations of the coefficients of $f$ performed above, we see that there exists an integer $c$ such that $a_p=c\sqrt{m}$, so that $a_p$ acts on $B$ as $\mu$ followed by multiplication by $c$. Therefore $a_p$ acts on $B_{\F_p}$ as the matrix $\barr{cc}0 & c\,\n\mu_{\p}\\c\mu_{\p} & 0 \narr$, where $\mu_{\p}$ is the reduction of $\mu$ modulo $\p$. Hence the following must hold:
		 $$\barr{cc}\n\fr_p\circ\fr_p & 0\\0 & \fr_p\circ\n\fr_p\narr-\barr{cc}c\,\n\mu_{\p}\circ\fr_p & 0\\0 & c\mu_{\p}\circ\n\fr_p\narr+\barr{cc}\varepsilon(p)p & 0\\0 & \varepsilon(p)p\narr=0,$$
		implying that
		\begin{equation*}
		\fr_{p^2}-c\,\n\mu_{\p}\circ\fr_p+\varepsilon(p)p=0
		\end{equation*}
		on $E_{\p}$, where $\fr_{p^2}$ is the usual Frobenius. What we know is the absolute value of $c$, we have to decide the sign. Let $T=\fr_{p^2}-|c|\,\n\mu_{\p}\circ\fr_p+\varepsilon(p)p$ and $S=\fr_{p^2}+|c|\,\n\mu_{\p}\circ\fr_p+\varepsilon(p)p$. Let $Q$ be a point on $E_{\p}$. Note that if $T(Q)=S(Q)$ then $(S-T)(Q)=0$, so $(2|c|\,\n\mu_{\p}\circ\fr_p)(Q)=0$, which implies that $Q$ has order dividing $2p|mc|$. Therefore if $Q$ has order coprime with $2p|mc|$ then $T(Q)\neq S(Q)$. If $T(Q)=0$ then $c$ is positive, otherwise it is negative. This gives us an algorithm to decide the sign of $c$. Note that if we had fixed the other embedding $\Z[\mu_*]\to \Q(\sqrt{m})$, we would get the opposite sign.

\subsubsection*{Ramified primes of good reduction}

		Let now $p$ be a ramified prime of good reduction for $E$, and let $\p$ be the unique prime of $\mathcal O_K$ lying above it. Then we have the following lemma.
			\begin{lemma}
				There is an exact sequence of group schemes over $\F_p$:
				$$0\to \mathbb G_{a}\to B_{\F_p}\to E_{\p}\to 0.$$
		\end{lemma}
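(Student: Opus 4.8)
We are in the situation where $p$ is a rational prime that ramifies in $K=\Q(\sqrt d)$, with $\p$ the unique prime of $\mathcal O_K$ above it, and where $E$ has good reduction at $\p$. The abelian surface $B=\res_{K/\Q}(E)$ has bad reduction at $p$ (by equation \eqref{conductor}, since $p\mid\Delta_K$), and in fact, from the analysis above, this bad reduction is additive and $a_p=0$. The claim is that the special fibre $B_{\F_p}$ of the Néron model sits in a short exact sequence $0\to\mathbb G_a\to B_{\F_p}\to E_{\p}\to 0$ of group schemes over $\F_p$. I would emphasize this is a statement about the identity component of the Néron special fibre; since the claimed middle term $B_{\F_p}$ and the claimed quotient $E_{\p}$ are connected, the exact sequence is really about $B^0_{\F_p}$.

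**Approach.** The plan is to exploit the explicit description of the Néron model of a Weil restriction along a ramified extension, following Bosch--Lütkebohmert--Raynaud (\emph{Néron Models}, \S7.6, or \S10 for the degeneration behaviour). For a \emph{wildly} vs.\ \emph{tamely} ramified quadratic extension one must be slightly careful, but the key input is this: if $\mathcal E$ is the Néron model of $E$ over $\mathcal O_K$ (smooth, since $E$ has good reduction at $\p$, so $\mathcal E_{\F_\p}=E_\p$ is an elliptic curve), then $\res_{\mathcal O_K/\Z_p}\mathcal E$ is a smooth group scheme over $\Z_p$ with generic fibre $\res_{K/\Q}(E)$, hence it receives the Néron mapping property map from, and in fact \emph{is}, the Néron model $\mathcal B$ of $B$ over $\Z_p$ (Weil restriction of a Néron model along a finite flat extension is again a Néron model by BLR \S7.6, Prop.~6). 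So $B_{\F_p}=(\res_{\mathcal O_K/\Z_p}\mathcal E)_{\F_p}$. Now base-changing the Weil restriction to the special fibre is governed by the structure of $\mathcal O_K\otimes_{\Z_p}\F_p=\mathcal O_K/p\mathcal O_K$; since $p$ ramifies, $p\mathcal O_K=\p^2$, so this ring is $\F_p[\varepsilon]/(\varepsilon^2)$ (when $p$ is odd; for $p=2$ wildly ramified it is still a length-$2$ local $\F_2$-algebra). The Weil restriction of a smooth scheme along $\spec\F_p[\varepsilon]/(\varepsilon^2)\to\spec\F_p$ is exactly the Greenberg-functor / jet-space construction: there is a canonical exact sequence
$$0\to \mathrm{Lie}(E_\p)\otimes_{\F_p}(\p/\p^2) \to \bigl(\res_{\F_p[\varepsilon]/(\varepsilon^2)/\F_p}(E_\p\times\spec\F_p[\varepsilon]/(\varepsilon^2))\bigr)\to E_\p\to 0,$$
where the kernel is the tangent bundle contribution, a one-dimensional $\F_p$-vector group, i.e.\ $\mathbb G_a$, and the quotient map is ``reduction mod $\varepsilon$''. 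Identifying $B_{\F_p}$ (or $B^0_{\F_p}$) with the left-hand Weil restriction and $E_\p$ with the reduction gives precisely the asserted sequence.

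**Key steps, in order.** First, identify the Néron model of $B$ over $\Z_p$ with $\res_{\mathcal O_K/\Z_p}\mathcal E$, invoking the good-reduction hypothesis at $\p$ to know $\mathcal E$ is an abelian scheme, plus BLR \S7.6 on stability of Néron models under Weil restriction along finite flat base change. Second, compute $\mathcal O_K\otimes_{\Z_p}\F_p$: because $p$ ramifies, this is a non-reduced local Artin $\F_p$-algebra of length $2$, isomorphic to $\F_p[\varepsilon]/(\varepsilon^2)$, with maximal ideal $\p/\p^2$ canonically a $1$-dimensional $\F_p$-vector space. Third, apply the base-change formula for Weil restriction, $(\res_{\mathcal O_K/\Z_p}\mathcal E)\times_{\Z_p}\F_p \cong \res_{(\mathcal O_K\otimes\F_p)/\F_p}(\mathcal E\times_{\mathcal O_K}(\mathcal O_K\otimes\F_p))$, so the special fibre is the Weil restriction of $E_\p$ along the nilpotent thickening. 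Fourth, analyze that Weil restriction along $\F_p[\varepsilon]/(\varepsilon^2)$: the natural surjection $\F_p[\varepsilon]/(\varepsilon^2)\twoheadrightarrow\F_p$ induces a surjection of group schemes onto $E_\p$ with kernel the ``infinitesimal'' part, which for a smooth group is $\mathrm{Lie}(E_\p)\otimes(\p/\p^2)\cong\mathbb G_a$ (one-dimensional since $\dim E=1$ and the nil ideal is $1$-dimensional). Assemble into the stated exact sequence; if one wants $B_{\F_p}$ literally (not just $B^0_{\F_p}$), remark that additive reduction here forces the component group to be trivial or, more honestly, restrict the claim to identity components, noting the two outer terms are connected.

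**Main obstacle.** The delicate point is the case $p=2$ with wild ramification (e.g.\ $K=\Q(\sqrt{-1})$ or $\Q(\sqrt{2})$ with $2$ ramified), where $\mathcal O_K\otimes_{\Z_2}\F_2$ is still $\F_2[\varepsilon]/(\varepsilon^2)$ as a ring but the ramification is wild, so one must check that the Weil-restriction base-change formula and the identification of the kernel with $\mathbb G_a$ (rather than with $\alpha_2$ or $\mu_2$, which are the other $1$-dimensional-tangent-space infinitesimal group schemes in characteristic $2$) still go through. In fact the kernel is always the \emph{additive} group: it is the functor $R\mapsto \mathrm{Lie}(E_\p)\otimes_{\F_p}(\varepsilon R)$, which is $R$-linear in $\varepsilon R\cong R$, hence literally $\mathbb G_{a}$, independently of wildness — the wildness affects only the \emph{filtration} of $\mathcal E$'s Lie algebra over $\mathcal O_K$, not this computation over the residue field. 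So the obstacle is really just bookkeeping: one should either cite BLR \S7.6 Prop.~6 and the Greenberg functor directly, or spell out the functor-of-points argument showing $\underline{\mathrm{Hom}}_{\F_p}(-, B_{\F_p})$ sits in the claimed sequence, and confirm the kernel is $\mathbb G_a$ by the linearity just noted.
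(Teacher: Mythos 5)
Your argument is essentially the paper's own proof: the paper likewise identifies the N\'eron model of $B$ over $\Z_{(p)}$ with $\res_{\mathcal O_{K,\p}/\Z_{(p)}}\mathcal E$ via \cite[\S7.6, Prop.~6]{bos}, and then obtains the exact sequence from the two-step filtration of the special fibre coming from the nilpotent thickening $\mathcal O_K/\p^2\simeq\F_p[t]/(t^2)$, with graded pieces $E_\p$ and $T_{\mathcal E_\p,0}\otimes(\p/\p^2)\simeq\mathbb G_a$ (citing Edixhoven's filtration (5.1.2) in \cite{edi2} for exactly the jet-space computation you carry out by hand). Your extra remarks on connectedness and on wild ramification at $p=2$ are correct and harmless but not needed beyond what the cited results already give.
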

			\begin{proof}
				The proof is essentially an application of the results of \cite{edi2}. In the notation of section $5$ of that paper, the discrete valuation ring $D$ is $\Z_{(p)}$, i.e. the localization of $\Z$ with respect to the prime ideal $(p)$, while the discrete valuation ring $D'$ is $\mathcal O_{K,\p}$. The abelian variety $X$ is $\res_{\mathcal O_{K,\p}/\Z_{(p)}}\mathcal E$, where $\mathcal E$ is the N\'{e}ron model of $E$ over $\mathcal O_{K,\p}$. By \cite[Proposition 6, section 7.6]{bos}, $\res_{\mathcal O_{K,\p}/\Z_{(p)}}\mathcal E$ is the N\'{e}ron model of $B$ over $\Z_{(p)}$, which we denote by $\mathcal B$. Now again in \cite[section 5.2]{edi2}, the author constructs a filtration $\mathcal B_{\F_p}= F^0\mathcal B_{\F_p}\supseteq F^1\mathcal B_{\F_p}\supseteq F^2\mathcal B_{\F_p}=0$ where for any $\F_p$-algebra $C$ and $i=0,1,2$ one has
				$$(F^i\mathcal B_{\F_p})(C)=\ker (\mathcal B_{\F_p}(C)\stackrel{\sim}{\to} \mathcal E(C[t]/(t^2))\to \mathcal E(C[t]/(t^i))).$$
				The successive quotients of the filtration $\gr \mathcal B_{\F_p}\coloneqq F^i\mathcal B_{\F_p}/F^{i+1}\mathcal B_{\F_p}$ give rise to a short exact sequence
				$$0\to F^1\mathcal B_{\F_p}\to \mathcal B_{\F_p}\to \gr^0\mathcal B_{\F_p}\to 0.$$
				Now $\gr^0\mathcal B_{\F_p}=\mathcal B_{\F_p}/F^1\mathcal B_{\F_p}\simeq \mathcal E_{\p}$, while $\gr^1\mathcal B_{\F_p}=F^1\mathcal B_{\F_p}$ by the fact that $F^2\mathcal B_{\F_p}=0$. The isomorphism (5.1.2) in \cite{edi2} tells us that
				$$\gr^i\mathcal B_{\F_p}\stackrel{\sim}{\to}T_{\mathcal E_{\p},0}\otimes_{\F_p}(m/m^2)^{\otimes i}$$
				as group schemes, where $m$ is the maximal ideal of $\mathcal O_{K,\p}$. This shows that $\gr^0\mathcal B_{\F_p}\simeq \mathbb G_{a}$, proving the claim.
\end{proof}

		By equation \eqref{charpoly}, we notice that $a_p$ is the trace of $\fr_p$ on $(V_{\lambda})_{I_p}=(V_l(B)\otimes_{\Q_l} F_{\lambda})_{I_p}$. The isomorphism \eqref{indrep} shows that we can find a basis of $T_l(B)$ such that $I_p$ acts via matrices of the form $\barr{cccc}1 & 0 & 0 & 0\\0 & 1 & 0 & 0 \\ 0 & 0 & * & *\\ 0 & 0 & * & *\\\narr$. Therefore $(T_l(B))_{I_p}\simeq (T_l(B))^{I_p}$. Now the lemma above implies, since $\mathbb G_{a}$ is $l$-torsion free, that $(T_l(B))^{I_p}\otimes_{\Z_l}\Q_l$ is a $1$-dimensional $F$-linear representation of $D_p$, and therefore $\fr_p$ acts as multiplication by $a_p$ on $(T_l(B))^{I_p}$.
		Now we can use the same argument we used above, together with \eqref{inertia-invariants}, to see that $\fr_p\colon  E_{\p}\to \n E_{\p}$ equals the map $c\cdot \mu_{\p}$, where $c$ is an integer of which we know the absolute value but not the sign. This sign can be determined in an analogous way as in the inert case. 

		\subsubsection*{Inert primes of multiplicative reduction}

		We now consider the case where $p$ is a prime that is inert in~$K$ such that $E$ has non-split multiplicative reduction at~$p$.  This case only occurs if $m=-1$, and we will exhibit in section \ref{exs} an example of a $\Q$-curve over $\Q(\sqrt{17})$ which has non-split multiplicative reduction at $5$.

		Let $\p$ be the unique prime of~$K$ lying over~$p$. Let $G$ be the smooth locus of the reduction of~$E$ modulo~$\p$; this is a non-split torus of dimension~$1$ over $k(\p)$.  Then we have a canonical isomorphism
		$$B_{\F_p} \simeq \res_{k(\p)/\F_p} G.$$
		We note that reducing $\mu$ gives an isomorphism
		$$
		\mu_{\p}\colon G\to \n G,
		$$
		where $\n G$ is the Galois conjugate of $G$ via $\nu$, which reduces to the Frobenius on $k(\p)$, such that if $\n\mu_{\p}\colon \n G\to G$ is the conjugate of $\mu_{\p}$, then we have $\n\mu_{\p}\circ\mu_{\p}=-1$.

% $$
% T_l(B_{\F_p}) = T_l(G) \oplus T_l(G^{(p)})
% $$

		The $L$-factor we are trying to determine is $1-a_p p^{-s}$.  Recall that in our setting, we have $a_\p=-1$ and $a_p = c\sqrt{-1}$ for some unknown $c\in\{\pm1\}$.  This $a_p$ arises as the eigenvalue of $\fr_p$ on $(V_\lambda)_{I_p}$ by \eqref{charpoly}. Since $E$ has semi-stable reduction at $\p$, the inertia subgroup at $\p$ acts unipotently on $V_l(E)$. Using \eqref{indrep}, one can check that $I_p$ acts unipotently on $V_{\lambda}$. Therefore there is a short exact sequence
		$$
		0\longrightarrow(V_\lambda)^{I_p}\longrightarrow V_\lambda
		\longrightarrow(V_\lambda)_{I_p}\longrightarrow 0.
		$$
		The product of the eigenvalue of $\fr_p$ on $(V_\lambda)_{I_p}$ and the eigenvalue of $\fr_p$ on $(V_\lambda)^{I_p}$ equals $-p$, because the determinant of the Galois representation on $V_{\lambda}$ equals $\chi$ times the $l$-adic cyclotomic character $\chi_l$ (see for example \cite[Proposition 2.2]{rib3}) and $\chi(p)=-1$ and $\chi_l(p)=p$. Therefore the eigenvalue of $\fr_p$ on $(V_\lambda)^{I_p}$ equals $-p/a_p=-p/(c\sqrt{-1})=cp\sqrt{-1}$.  This implies that the Frobenius endomorphism $\fr_p$ of $B_{\F_p}$ equals $cp(\mu_\p)_*$.
		This $\fr_p$ is induced by the endomorphism of $G\times \n G$ defined by the matrix $\barr{cc} 0& \fr_p \\ \fr_p & 0\narr$.
% = $\barr{cc} 0& cp\mu_\p^{(p)} \\ cp\mu_\p & 0\narr$.
		Furthermore, the endomorphism $(\mu_\p)_*$ of $B_{\F_p}$ is induced by the endomorphism of $G\times \n G$ defined by the matrix $\barr{cc} 0& \n\mu_\p \\ \mu_\p & 0\narr$.
		This implies that the two maps $\fr_p$ and $cp\mu_\p$ from $G$ to $\n G$ are equal. Hence we can determine $c$ by taking random points $Q$ of~$G$ and checking for which $c$ we have the equality $\fr_p(Q)=cp\mu_\p(Q)$ in $\n G$.

	\section{Computing $L(E,1)$}\label{mainsect}

		We mentioned in the previous section that there is an equality of $L$-functions
		$$L(E/K,s)=L(f,s)\cdot L(\s f,s).$$
		From this we can derive the formula
			\begin{equation}\label{Lvaluequadratic}
		L(E/K,1)=\left(-2\pi i\int_0^{i\infty}f(t)dt\right)\cdot \left(-2\pi i\int_0^{i\infty}\s f(t)dt\right).
		\end{equation}
		The key point now is that there exists a map $\pi\colon A_f\to E$ defined over $K$ (and consequently there exists a conjugate map $\n \pi\colon A_f\to \n E$). In fact if $w_{\beta}$ is an endomorphism of $A_f$ as in Theorem \ref{betaend}, then there is an isogeny $\varphi\colon w_{\beta}(B)\to E$, and we can let $\pi\coloneqq \varphi\circ w_{\beta}$. This allows us to consider the pullback of an invariant differential $\omega_E$ on $E$, but this time this pullback needs not to be a multiple of the cusp form $f$. In fact, in the notation of section \ref{buiblo}, by Theorem \ref{betaend} the space generated by $\pi^*(\omega_E)$ is spanned by $h=h_{\beta}$ and $\n h=\n(h_{\beta})$.
		First of all we need to understand how to change the base of $H^0(A_f,\Omega^1_{\C})$ from $\{f,\s f\}$ to $\{h,\n h\}$. This is easily done by just looking at the definitions. Recall the following standard result:
		$$g(\chi)=\begin{cases}\sqrt{\Delta_K} & \mbox{if } \Delta_K>0\\ i\sqrt{-\Delta_K} & \mbox{if } \Delta_K<0\end{cases}$$
		(for a proof see for example \cite{lan}). From now on, when we will write $\sqrt{\Delta_K}$ we will mean one of the two values given above, according to the sign of $\Delta_K$.
		Set $\displaystyle \kappa\coloneqq \frac{\sqrt{\Delta_K}}{\beta(\sigma)}\in FK$. Now we have to be a bit careful in distinguishing two cases, namely $K=F$ and $K\neq F$ (as we will see in section \ref{exs}, both cases actually occur). In the first case we can identify the two Galois groups $\gal(K/\Q)$ and $\gal(F/\Q)$ so that $\nu=\sigma$. In the second one we can identify $\gal(FK/\Q)$ with the group $\{1,\sigma,\nu,\sigma\nu\}$, where by a small abuse of notation $\sigma$ generates $\gal(FK/K)$ and $\nu$ generates $\gal(FK/F)$. Recall that according to the definition of the cocycle $c$ given in section \ref{buiblo} we have		
		$$c(\sigma,\sigma)=\frac{g(\chi^{-1})g(\s\chi^{-1})}{g(\textbf{1}_{\Delta_K})}=g(\chi)^2=\s\beta(\sigma)\beta(\sigma).$$
		This implies $\kappa\cdot\s\kappa=\eta$, where $\eta=-1$ if $K=F$ and $\eta=1$ if $K\neq F$. Note also that in this last case $\n\kappa=-\kappa$. Then by definition
		$$h=\frac{1}{1+\kappa}(f+\kappa \s f)=\frac{(1+\s\kappa)f+(\eta+\kappa)\s f}{(1+\kappa)(1+\s\kappa)}.$$
	    A priori $h$ has coefficients in the composite field $FK$ but it is clear that as long as $\eta=1$ one has that $\s h=h$, which implies that the coefficients of $h$ lie in fact in $K$, as predicted by the theory. This is trivially true in the case where $K=F$ and $\eta=-1$. One checks easily that
	    $$\barr{c}h\\ \n h\narr=\barr{cc}\dfrac{1}{1+\kappa} & \dfrac{1}{1+\eta\cdot\s\kappa}\\ \dfrac{1}{1-\kappa} & \dfrac{1}{1-\eta\cdot\s\kappa}\narr\barr{c}f\\ \s f\narr.$$
		Note that the determinant of the transformation equals $\displaystyle \frac{2}{\kappa-\eta\cdot \s \kappa}$, so it is always non-zero.
	    Inverting the system leads to
	    $$\barr{c}f\\ \s f\narr=\frac{1}{2}\barr{cc}1+\kappa & 1-\kappa\\ 1+\eta\cdot\s\kappa & 1-\eta\cdot\s\kappa\narr\barr{c}h\\ \n h\narr.$$
	    This gives
	    $$\begin{dcases}L(f,1)=\frac{1}{2}(1+\kappa)L(h,1)+\frac{1}{2}(1-\kappa)L(\n h,1) & \\ L(\s f,1)=\frac{1}{2}(1+\eta\cdot\s\kappa)L(h,1)+\frac{1}{2}(1-\eta\cdot\s\kappa)L(\n h,1), & \end{dcases}$$
		and substituting in equation \eqref{Lvaluequadratic} we get
			\begin{equation}\label{Lvaluequadratic2}
				L(E/K,1)=\frac{(1+\kappa)(1+\eta\cdot\s\kappa)}{4}L(h,1)^2+\frac{(1-\kappa)(1-\eta\cdot\s\kappa)}{4}L(\n h,1)^2.
		\end{equation}
		Note that both the element $\kappa\in FK$ and the cusp form $h$ depend on the chosen splitting map $\beta$. We will now explain how to make a choice for the splitting map which will allow us to simplify equation \eqref{Lvaluequadratic2}.

		Let $\beta$ be a splitting map. Then the element $\beta(\sigma)$ satisfies $N_{F/\Q}(\beta(\sigma))=\Delta_K$. This proves that $m\in N_{K/\Q}(K)$, so let $\alpha\in K$ be an element of norm $m$. Let $\omega_E$ be an invariant differential on $E$ and let $\omega_{\n E}'$ be an invariant differential on $\n E$. Let $\vartheta\in K$ be such that $\mu^*(\omega_{\n E}')=\vartheta\cdot\omega_E$ and set $\displaystyle \omega_{\n E}\coloneqq \frac{\vartheta}{\alpha}\omega_{\n E}'$. Then $\mu^*(\omega_{\n E})=\alpha\cdot\omega_E$. Let now $\alpha'\in K$ be such that $(\n\mu)^*(\omega_E)=\alpha'\omega_{\n E}$. Since $(\n\mu\circ \mu)^*$ coincides with multiplication by $m$, this shows that $\alpha'=\n\alpha$. To find explicitly such an $\alpha$, choose a Weierstrass equation for $E$ of the form $y^2+a_1xy+a_3y=x^3+a_2x^2+a_4x+a_6$ for some $a_1,\ldots,a_6\in K$ and then let $\displaystyle \omega_E=\frac{dx}{2y+a_1x+a_3}$ and $\displaystyle \omega_{\n E}=\frac{dx}{2y+\n a_1x+\n a_3}$. Then $\mu^*(\omega_{\n E})=\alpha\cdot\omega_E$, where $\alpha\in K$ has norm $m$.

		From now on, $\omega_E$ and $\omega_{\n E}$ will be invariant differentials on $E$ and $\n E$, respectively, such that $\mu^*(\omega_{\n E})=\alpha\cdot\omega_E$ where $\alpha\in K$ has norm $m$. Write $\alpha=p+q\sqrt{\Delta_K}$ for some $p,q\in \Q$ (note that $q\neq 0$ because by assumption $m$ is not a square in $\Q$). Then we get a splitting map $\beta\colon \gal(F/\Q)\to F^*$ by setting $\displaystyle \beta(\sigma)=\frac{p}{q}+\frac{1}{q}\sqrt{m}$, since then $N_{F/\Q}(\beta(\sigma))=\Delta_K$. The splitting map $\beta$ that we get in this way induces an endomorphism $w$ of the abelian variety $A_f$ as described in Theorem \ref{betaend}, and the image $w(A_f)\coloneqq B$ is isogenous to $E$ since both $B$ and $E$ are building blocks of $A_f$. Let $\varphi\colon B\to E$ be an isogeny of minimal degree: with a little abuse of notation we will denote the composition $\varphi\circ w$ by $w$. From now on we set $\pi\coloneqq w\circ j_1 \colon X_1(N)\to E$, where $j_1$ is the natural map $X_1(N)\to J_1(N)\to A_f$. The pullback $\pi^*(\omega_E)$ lies in the $K$-vector space spanned by the form $h$ corresponding to $\beta$, so we have $\pi^*(\omega_E)=\gamma\cdot h$ for some $\gamma\in K^*$.
			\begin{lemma}\label{gammarational}
				We have $\displaystyle \frac{\n\gamma}{\gamma}=\pm 1$ and therefore $\gamma^2=\pm N_{K/\Q}(\gamma)$.
		\end{lemma}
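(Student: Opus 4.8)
The plan is to compare the chosen parametrization $\pi = w\circ j_1\colon X_1(N)\to E$ with its Galois conjugate $\n\pi$, using the isogeny $\mu$ to link the two, by computing $\n\pi^*(\omega_{\n E})$ in two ways and reading off the result in the basis $\{f,\s f\}$ of $H^0(A_f,\Omega^1_\C)$. First, applying $\nu$ to the identity $\pi^*(\omega_E) = \gamma\cdot h$ — legitimate since $j_1$ is defined over $\Q$, $\omega_{\n E} = \n\omega_E$, and $h$ has coefficients in $K$ — gives $\n\pi = \n w\circ j_1$ with $\n w\colon A_f\to\n E$ and $\n\pi^*(\omega_{\n E}) = \n\gamma\cdot\n h$. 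Reading the matrix expressing $h,\n h$ in terms of $f,\s f$ displayed above, and simplifying with $\kappa\,\s\kappa = \eta$ (so $\eta\,\s\kappa = \kappa^{-1}$), one obtains the uniform formulas $h = \frac1{1+\kappa}(f + \kappa\,\s f)$ and $\n h = \frac1{1-\kappa}(f - \kappa\,\s f)$, valid whether or not $K = F$.

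Next I would produce a second expression for $\n\pi^*(\omega_{\n E})$ by factoring $\n\pi$ through $A_f$ via $\mu$. Both $\mu\circ w$ and $\n w$ are nonzero elements of $\Hom_K(A_f,\n E)\otimes\Q$; since $A_f$ is $K$-isogenous to the square of the building block $B_f$, an elliptic curve without CM (so $\en_K^0(B_f) = \Q$), this $\Q$-vector space is one-dimensional over $\en_{\Q}^0(A_f) = F$. Hence $\n w = \mu\circ w\circ\tilde a_0$ for a unique $a_0\in F^*$, where $\tilde a_0\in\en_{\Q}^0(A_f)$ corresponds to $a_0$. Pulling back $\omega_{\n E}$ through this factorization, and using $\mu^*(\omega_{\n E}) = \alpha\,\omega_E$, $j_1^*w^*(\omega_E) = \pi^*(\omega_E) = \gamma h$, and the fact that $\tilde a_0$ acts on the eigenlines $\C f$ and $\C\,\s f$ through the two embeddings of $F$ into $\C$, one finds
$$\n\pi^*(\omega_{\n E}) = \alpha\gamma\cdot\tilde a_0^*(h) = \frac{\alpha\gamma}{1+\kappa}\bigl(a_0\,f + \kappa\,\s a_0\,\s f\bigr).$$
Equating the two expressions and comparing coefficients: the ratio of the $\s f$-coefficient to the $f$-coefficient forces $\s a_0 = -a_0$, so $a_0 = c_0\sqrt m$ with $c_0\in\Q^*$ (the trace-zero elements of $F = \Q(\sqrt m)$ lie in $\Q\sqrt m$), and the $f$-coefficient yields $\displaystyle\frac{\n\gamma}{\gamma} = \frac{(1-\kappa)\alpha}{1+\kappa}\,a_0$.

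It then remains to compute $\frac{(1-\kappa)\alpha}{1+\kappa}$ from the explicit choice of $\beta$. With $\alpha = p + q\sqrt{\Delta_K}$ and $\beta(\sigma) = \frac pq + \frac1q\sqrt m$, one has $\kappa = \frac{\sqrt{\Delta_K}}{\beta(\sigma)} = \frac{q\sqrt{\Delta_K}}{p+\sqrt m} = \frac{\alpha-p}{p+\sqrt m}$, hence $1+\kappa = \frac{\alpha+\sqrt m}{p+\sqrt m}$ and $1-\kappa = \frac{\n\alpha+\sqrt m}{p+\sqrt m}$ (using $2p-\alpha = \n\alpha$). Since $\alpha\,\n\alpha = N_{K/\Q}(\alpha) = m = (\sqrt m)^2$,
$$\frac{(1-\kappa)\alpha}{1+\kappa} = \frac{\alpha\,\n\alpha + \alpha\sqrt m}{\alpha+\sqrt m} = \frac{\sqrt m\,(\sqrt m+\alpha)}{\alpha+\sqrt m} = \sqrt m,$$
so $\frac{\n\gamma}{\gamma} = \sqrt m\,a_0 = c_0 m\in\Q^*$. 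Applying $\nu$ to this equality of rational numbers gives $\frac{\gamma}{\n\gamma} = c_0 m$ as well; thus $c_0 m = \frac{\n\gamma}{\gamma} = \frac{\gamma}{\n\gamma}$, so $(\n\gamma)^2 = \gamma^2$, i.e. $\n\gamma/\gamma = \pm1$, and then $\gamma^2 = (\gamma\,\n\gamma)(\gamma/\n\gamma) = \pm N_{K/\Q}(\gamma)$, as claimed.

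The crux is the structural input of the second paragraph: identifying $\n w$ with $\mu\circ w$ up to a single Hecke factor requires knowing that $\Hom_K(A_f,\n E)\otimes\Q$ is one-dimensional over $\en_{\Q}^0(A_f)$, which relies on the building-block description of $A_f$ recalled in Section~\ref{buiblo}, and one must keep track of how the endomorphism $\tilde a_0$ acts on $H^0(A_f,\Omega^1_\C)$. Any pullback-versus-pushforward ambiguity in that Hecke action can at worst interchange $a_0$ and $\s a_0$ and so does not affect the conclusion, since only $\s a_0 = -a_0$ and the rationality of $\sqrt m\,a_0$ are ultimately used; everything else is the linear algebra of the change of basis between $\{f,\s f\}$ and $\{h,\n h\}$ and the identity $\frac{(1-\kappa)\alpha}{1+\kappa} = \sqrt m$.
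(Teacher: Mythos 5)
Your proof is correct, and it reaches the conclusion by a route that is parallel to but genuinely distinct from the paper's. The paper's proof rests on a cited result of Gonz\'alez--Lario: a Hecke operator $T_p$ at an inert prime $p$ induces an isogeny $\mu_\nu\colon E\to\n E$ with $\mu_\nu^*(\omega_{\n E}/\n\gamma)=\n\lambda_p\cdot(\omega_E/\gamma)$; combining this with $\Hom_K(E,\n E)\otimes\Q\simeq\Q$ (so $\mu_\nu=s\mu$ for some $s\in\Q^*$), the paper identifies $\n\lambda_p/s=\pm\alpha$ via the same algebraic identity you use (your $\frac{(1-\kappa)\alpha}{1+\kappa}=\sqrt m$ is equivalent to the paper's $\sqrt m\cdot\frac{1-\kappa}{1+\kappa}=\n\alpha$, since $\alpha\n\alpha=m$) and then compares with $\mu^*(\omega_{\n E})=\alpha\omega_E$. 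You instead work one level up, on $A_f$: you compare $\n w$ with $\mu\circ w$ inside $\Hom_K(A_f,\n E)\otimes\Q$, which is indeed free of rank one over $F=\en^0_{\Q}(A_f)$ because $A_f\sim_K E\times\n E$ and $E$ has no CM, and you read off the coefficient relations in the basis $\{f,\s f\}$. This buys two things: the external citation is replaced by structural facts already recalled in sections \ref{strmod}--\ref{buiblo} (the rank-one statement and the diagonal $F$-action on $H^0(A_f,\Omega^1_{\C})$), and the endgame is cleaner --- once $\n\gamma/\gamma\in\Q^*$ is known, $N_{K/\Q}(\n\gamma/\gamma)=1$ forces it to equal $\pm1$, so you never need to pin down the constant $c_0$, whereas the paper tracks the sign explicitly. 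Two caveats, neither of which is a gap: both arguments rely on the normalization $\omega_{\n E}=\n\omega_E$ (guaranteed by the explicit recipe in section \ref{mainsect}, and equally implicit in the paper's opening line $\n\pi^*(\omega_{\n E}/\n\gamma)=\n h$), and the pullback-versus-pushforward convention for the $F$-action on differentials, which you correctly observe can at worst swap $a_0$ and $\s a_0$ and therefore does not affect the conclusion.
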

			\begin{proof}
				Let $\omega_E'=\omega_E/\gamma$ and $\omega_{\n E}'=\omega_{\n E}/\n\gamma$, so that $\pi^*(\omega_E')=h$ and $\n\pi^*(\omega_{\n E}')=\n h$. As noticed in \cite[p. 488]{gola}, there exists an isogeny $\mu_{\nu}\colon E\to \n E$ induced by the action of some Hecke operator $T_p$ on $J_1(N)$ for an inert prime $p$ such that $\mu_{\nu}^*(\omega_{\n E}')=\n\lambda_p\cdot\omega_E'$ where $\lambda_p$ is the $p$-th coefficient of $h$. This implies $\mu_{\nu}^*(\omega_{\n E})=\n\lambda_p\cdot\frac{\n\gamma}{\gamma}\cdot\omega_E$. On the other hand, $\Hom(E,\n E)\otimes \Q\simeq \Q$ and this means that there exists some $s\in \Q^*$ such that $\mu_{\nu}=s\cdot \mu$ and thus
				$$\mu^*(\omega_{\n E})=\frac{\n\lambda_p}{s}\cdot\frac{\n\gamma}{\gamma}\cdot\omega_E.$$
				This implies that $N_{K/\Q}\left(\frac{\n\lambda_p}{s}\right)=m$. Now recall that $\displaystyle \lambda_p=\frac{a_p+\kappa\s a_p}{1+\kappa}$. Since $p$ is inert, the computations of section \ref{newform} show that $a_p$ equals $t\sqrt{m}$ for some $t\in \Q$. Thus we have
				$$\lambda_p=t\sqrt{m}\cdot \frac{1-\kappa}{1+\kappa},$$
				which implies $t=\pm s$ because $N_{K/\Q}(\lambda_p)=\eta s^2 m$ and $N_{K/\Q}\left(\frac{1-\kappa}{1+\kappa}\right)=\eta$. But now
				$$\frac{\lambda_p}{s}=\pm\sqrt{m}\cdot\frac{1-\frac{\sqrt{\Delta_K}}{p/q+1/q\sqrt{m}}}{1+\frac{\sqrt{\Delta_K}}{p/q+1/q\sqrt{m}}}=\pm\sqrt{m}\cdot\frac{\n\alpha+\sqrt{m}}{\alpha+\sqrt{m}}=\pm \n\alpha$$
				and therefore $\displaystyle \frac{\n\lambda_p}{s}=\pm \alpha$, showing that $\displaystyle \frac{\n\gamma}{\gamma}=\pm 1$ because $\mu^*(\omega_{\n E})=\alpha\cdot\omega_E$.
		\end{proof}
		\subsection{The images $0$ and $i\infty$ on $E$}

		The goal of this section is to prove that the points $\pi(0),\pi(i\infty)\in E(\OQ)$ are defined over $K$. If $\Gamma$ is a subgroup of the modular group $\SL_2(\Z)$ such that $\Gamma_1(N)\sub\Gamma\sub\Gamma_0(N)$, then the modular curve $X(\Gamma)$ has a model over $\Q$; however, its cusps may not be defined over $\Q$. In fact the following theorem holds.
			\begin{theorem}[Stevens, \cite{stev}]\label{actioncusps}
				Let $\Gamma$ be as above. Then:
					\begin{enumerate}
						\item[i)] the cusps of $X(\Gamma)$ are defined over $\Q(\xi_N)$ (where $\xi_N=e^{2\pi i/N}$);
						\item[ii)] for $d\in (\Z/N\Z)^*$ let $\tau_d$ be the element of $\gal(\Q(\xi_N)/\Q)$ satisfying $\leftidx{^{\tau_d}}\xi_N=\xi_N^d$ and let $\binom{x}{y}$ be a cusp of $\Gamma$. Then 
								$$\leftidx{^{\tau_d}}{\binom{x}{y}}=\binom{x}{d'y},$$
							  where $d'$ is a multiplicative inverse of $d$ modulo $N$.		
				\end{enumerate}
		\end{theorem}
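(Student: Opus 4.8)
The plan is to deduce both statements from the moduli interpretation of $X(\Gamma)$ over $\Q$ together with the Tate-curve description of its cusps. First I would reduce to $\Gamma=\Gamma_1(N)$: the natural morphisms $X_1(N)\to X(\Gamma)\to X_0(N)$ are defined over $\Q$, carry cusps to cusps compatibly with the labelling by column vectors $\binom{x}{y}$, and every cusp of $X(\Gamma)$ is the image of a cusp of $X_1(N)$, so it suffices to treat $X_1(N)$. Recall (Deligne--Rapoport, Katz--Mazur) that $X_1(N)$ has a model over $\Q$ as the coarse moduli space of generalized elliptic curves with a point of exact order $N$, and that under this model the cusps correspond to points of exact order $N$ on the standard N\'{e}ron polygon --- equivalently, to the degenerations realised by Tate curves equipped with such a point.

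Next I would make explicit, over $\C$, the dictionary between the combinatorial labelling and the Tate-curve picture. Starting from the analytic uniformization $\tau\mapsto(\C/(\Z+\Z\tau),\,\tfrac{1}{N})$ and writing a cusp as $\binom{x}{y}=\gamma\cdot i\infty$ with $\gamma\in\SL_2(\Z)$ having first column $\binom{x}{y}$, one substitutes $\tau\mapsto\gamma\tau$, rescales the lattice by the automorphy factor, and lets $\operatorname{Im}\tau\to\infty$; the pair then degenerates to the Tate curve $E_q$ together with a point of exact order $N$ of the form $\xi_N^{a}\cdot q^{b/N}$, where $(a,b)\bmod N$ is read off from the entries of $\gamma$. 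In particular, modulo the equivalences defining the cusps of $\Gamma_1(N)$, the datum $(a,b)$ records precisely the cusp $\binom{x}{y}$, with $b$ determined up to sign by $y$. This is a routine computation with lattices and $q$-expansions.

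Finally I would compute the Galois action. The $\Q$-model identifies the ``Tate curve with level structure'' data with $\OQ$-points of $X_1(N)$, and $\gal(\OQ/\Q)$ acts on this data \emph{only} through its action on $\mu_N\subset E_q$: indeed $E_q$, the chosen root $q^{1/N}$, and every other ingredient of the construction are defined over $\Q((q^{1/N}))$, while $\xi_N$ generates $\mu_N$. Hence $\tau_d$ sends $\xi_N^{a}q^{b/N}$ to $\xi_N^{da}q^{b/N}$; transporting this back through the dictionary above exhibits it again as the level structure attached to a cusp, and a short manipulation of the equation defining $\gamma$ identifies that cusp as $\binom{x}{d'y}$ with $d'\equiv d^{-1}\pmod{N}$ --- the inverse appearing because $\gal(\OQ/\Q)$ acts on the basis $(\xi_N,q^{1/N})$ of the level structure by $\operatorname{diag}(d,1)$ whereas the label $\binom{x}{y}$ transforms contravariantly. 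Since $G_{\Q(\xi_N)}$ acts trivially on $\mu_N$ it fixes every such point, so all cusps are defined over $\Q(\xi_N)$, which is (i), and the computation just made is (ii).

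I expect the only genuine difficulty to be bookkeeping: matching the column-vector labels with the analytic, respectively Tate-curve, level structures, and tracking the transpose-inverse so that $d^{-1}$ and not $d$ ends up in the second coordinate --- a variance and orientation convention that is easy to get backwards. One should also check consistency at the cusps lying over $0\in X_0(N)$ and, more basically, that the $\Q$-structure on $X(\Gamma)$ used here is the standard one of Deligne--Rapoport; this is where one leans on the foundational references and on \cite{stev} for the precise normalisation.
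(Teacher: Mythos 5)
First, a remark on the comparison you asked for: the paper offers no proof of this statement at all --- it is quoted verbatim from Stevens' book \cite{stev} --- so there is no internal argument to measure yours against. Your sketch reconstructs what is essentially the standard (and Stevens' own) proof: reduce to $\Gamma_1(N)$ along the $\Q$-rational maps $X_1(N)\to X(\Gamma)$, describe the cusps via N\'eron polygons/Tate curves with level structure, and observe that $\GQ$ acts on that data only through $\mu_N$. The strategy is the right one.

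The one substantive gap is the normalisation you defer to the very end; it is not a removable sign ambiguity but is exactly the content of part (ii). With the moduli problem ``point of exact order $N$'' and the uniformization $\tau\mapsto(\C/(\Z+\Z\tau),\tfrac1N)$ that you actually write down, the cusp $\infty=\binom{1}{0}$ degenerates to $(\mathrm{Tate}(q),\xi_N)$; since the only automorphisms of $\mathrm{Tate}(q)$ are $\pm1$, the pairs $(\mathrm{Tate}(q),\xi_N)$ and $(\mathrm{Tate}(q),\xi_N^d)$ are distinct points of the cuspidal subscheme, so $\tau_d$ moves $\binom{1}{0}$ to $\binom{d'}{0}$, while $0=\binom{0}{1}$ degenerates to the $N$-gon with a $\Q$-rational point and is fixed. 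Carried through, your dictionary therefore yields $\binom{x}{y}\mapsto\binom{d'x}{y}$, the transpose of the stated formula. To obtain $\binom{x}{y}\mapsto\binom{x}{d'y}$ one must use the other $\Q$-structure, namely the $\mu_N$-moduli problem (equivalently, keep the $(E,P)$ problem but take the level structure to be $\tau/N$ in the uniformization, so that $\infty$ degenerates to the $N$-gon with rational level structure and $0$ to $(\mathrm{Tate}(q),\xi_N)$). The two models are isomorphic over $\Q$, but not via the identity on $\mathcal H^*/\Gamma_1(N)$: the isomorphism interchanges $0$ and $\infty$. The choice is forced here, because the $\Q$-structure used throughout the paper is the one for which $q$-expansions at $\infty$ cut out $S_2(\Gamma_1(N))_{\Q}$ inside $H^0(X_1(N),\Omega^1_{\C})$, i.e.\ the $\mu_N$-model, and it is precisely the rationality of $\infty$ in that model that Lemma \ref{cuspdef} relies on. So the ``bookkeeping'' you postpone has to be carried out, with the correct model identified at the outset, before the sketch proves the theorem as stated; everything else in your outline is sound.
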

		Let $\pi\colon X_1(N)\to E$ be our modular parametrization, which is defined over $K$. Recall that $\pi$ is the composition of the map $j_1\colon X_1(N)\to A_f$ and the map $w\colon A_f\to E$.
		The character $\chi$ can be viewed as a Dirichlet character modulo $N$ and if $H=\ker \chi$ then $\Q(\xi_N)^{H}=K$. Let us introduce the following congruence subgroup:
		$$\Gamma_H=\left\{\barr{cc}a & b\\c & d\narr\in \Gamma_0(N)\colon a,d\in H\right\}.$$
			\begin{lemma}\label{cuspdef}
				The points $\pi(0),\pi(i\infty)\in E(\OQ)$ are defined over $K$.
		\end{lemma}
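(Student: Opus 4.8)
The plan is to show that $\pi$ descends to a morphism $\bar\pi\colon X(\Gamma_H)\to E$ defined over $K$, so that $\pi=\bar\pi\circ g$, where $g\colon X_1(N)\to X(\Gamma_H)$ is the natural covering (defined over $\Q$), and then to check that the images under $g$ of the cusps $0$ and $i\infty$ of $X_1(N)$ are $K$-rational points of $X(\Gamma_H)$. Since $\bar\pi$ is defined over $K$, this gives $\pi(0),\pi(i\infty)\in E(K)$.

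For the descent, write $j_1$ as the composition of the Abel--Jacobi map $X_1(N)\to J_1(N)$ based at the cusp $i\infty$ with the quotient $q\colon J_1(N)\to A_f$ defining $A_f$; then the homomorphism $J_1(N)\to E$ induced on Jacobians by $\pi=w\circ j_1$ equals $w\circ q$. For every $d\in H=\ker\chi$ the diamond operator $\langle d\rangle$ acts on $A_f$ as multiplication by $\varepsilon(d)$, and $\varepsilon|_H=1$ because $\varepsilon$ is trivial or equal to $\chi$ (see Section~\ref{newform}); hence $\langle d\rangle$ acts trivially on $A_f$, so $(\langle d\rangle-1)J_1(N)\subseteq\ker q$. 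Let $g_*\colon J_1(N)\to J(\Gamma_H)$ be the homomorphism induced on Jacobians by $g$, where $J(\Gamma_H)$ denotes the Jacobian of $X(\Gamma_H)$; since $X(\Gamma_H)$ is the quotient of $X_1(N)$ by the diamond operators $\langle d\rangle$ with $d\in H$, the kernel of $g_*$ is the abelian subvariety of $J_1(N)$ generated by the images of $\langle d\rangle-1$, $d\in H$. Therefore $\ker g_*\subseteq\ker q$, so $q$ factors through $g_*$, and hence so does $w\circ q$. By functoriality of the Abel--Jacobi maps, and since $i\infty$ is a $\Q$-rational cusp of $X_1(N)$ (see below), it follows that $\pi=\bar\pi\circ g$ for a morphism $\bar\pi\colon X(\Gamma_H)\to E$; and since $g$, $g_*$ and $q$ are defined over $\Q$ whereas $w$ is defined over $K$, the map $\bar\pi$ is defined over $K$. (As a consistency check, $\pi^*(\omega_E)=\gamma\cdot h$ lies in $S_2(\Gamma_H)$, because $\langle d\rangle$ fixes $h$ for $d\in H$.)

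For the cusps, recall from Theorem~\ref{actioncusps}(i) that the cusps of $X_1(N)$ and of $X(\Gamma_H)$ are defined over $\Q(\xi_N)$, and note that $G_K$ acts on $\Q(\xi_N)$ through $\gal(\Q(\xi_N)/K)=H$. By Theorem~\ref{actioncusps}(ii) we have ${}^{\tau_d}\binom{1}{0}=\binom{1}{0}$ for all $d$, so the cusp $i\infty=\binom{1}{0}$ is $\Q$-rational on $X_1(N)$; hence $g(i\infty)$ is $\Q$-rational on $X(\Gamma_H)$ and $\pi(i\infty)=\bar\pi(g(i\infty))\in E(K)$. For the cusp $0=\binom{0}{1}$, let $\sigma\in G_K$ have image $d_\sigma\in H$; then Theorem~\ref{actioncusps}(ii) gives ${}^\sigma\binom{0}{1}=\binom{0}{d_\sigma^{-1}}$. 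The cusps of $X(\Gamma_H)$ lying over the cusp $0$ of $X_0(N)$ are parametrized by $(\Z/N\Z)^*/(\pm H)$, with $\binom{0}{d}$ mapping to the class of $d$; since $d_\sigma^{-1}\in H$, we get $g(\binom{0}{d_\sigma^{-1}})=g(\binom{0}{1})$. Thus ${}^\sigma g(0)=g({}^\sigma 0)=g(0)$ for every $\sigma\in G_K$, so $g(0)\in X(\Gamma_H)(K)$ and $\pi(0)=\bar\pi(g(0))\in E(K)$.

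The one step that is not purely formal is the description of $\ker g_*$ as the abelian subvariety generated by the images of $\langle d\rangle-1$ ($d\in H$), equivalently the identification of $J(\Gamma_H)$ with the $H$-coinvariant quotient of $J_1(N)$; this is the standard behaviour of Jacobians under a quotient by a finite group of automorphisms, and once it is granted, the descent of $\pi$ and the $K$-rationality of the images of $0$ and $i\infty$ follow immediately.
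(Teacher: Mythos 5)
Your proof is correct and follows essentially the same route as the paper: factor $\pi$ through the intermediate curve $X(\Gamma_H)$ and observe via Stevens' theorem that the cusps $0$ and $i\infty$ of $X(\Gamma_H)$ are $K$-rational (with $i\infty$ even $\Q$-rational). The only difference is that you justify the factorization of $j_1$ through $X(\Gamma_H)$ in more detail, via the triviality of the diamond operators $\langle d\rangle$, $d\in H$, on $A_f$, where the paper simply notes $S_2(\Gamma_0(N),\varepsilon)\subseteq S_2(\Gamma_H)$ and asserts the commuting triangle.
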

			\begin{proof}
				First notice that $\Gamma_1(N)\sub \Gamma_H\sub\Gamma_0(N)$ and therefore we can apply Theorem \ref{actioncusps} to $X(\Gamma_H)$. The cusp $i\infty\in X(\Gamma_H)$, which is represented by $\binom{1}{0}$ is in fact defined over $\Q$ since for every $d\in (\Z/N\Z)^*$ one has ${}^{\tau_d}\binom{1}{0}=\binom{1}{0}$. On the other hand, ${}^{\tau_d}\binom{0}{1}=\binom{0}{d'}$. Thus if $d\in H$ also $d'\in H$ and the cusps $\binom{0}{1}$ and $\binom{0}{d'}$ are equivalent via any element of $\Gamma_H$ of the form $\barr{cc}a & Nb\\ Nc & d'\narr$ with $a,b,c\in \Z$. This shows that the cusp $0\in X(\Gamma_H)$ is defined over $K$.
				Now observe that $f\in S_2(\Gamma_0(N),\varepsilon)$ and $S_2(\Gamma_0(N),\varepsilon)\sub S_2(\Gamma_H)$ by definition of $\Gamma_H$ because either $\varepsilon$ is trivial or $\varepsilon=\chi$. This shows that $A_f$ is a quotient of the jacobian of $X(\Gamma_H)$. Thus the following diagram of varieties over $\Q$ commutes:
				$$\begin{xymatrix}{
				   X_1(N)\ar[r]^{p}\ar[d]_{j_1} & X(\Gamma_H) \ar[ld]^{j_2}\\
					A_f
			  }\end{xymatrix}$$
				where $p$ is the map induced by the inclusion $\Gamma_1(N)\sub\Gamma_H$ and $j_2$ is defined analogously to $j_1$, namely it is the composition $X(\Gamma_H)\to J(\Gamma_H)\to A_f$. Now the claim follows from the fact that $p(0)=0$ and $p(i\infty)=i\infty$.
		\end{proof}
		We shall now distinguish the two cases $\Delta_K>0$ and $\Delta_K<0$.

		\subsection{$K$ is real}

		First we recall the following lemma.
			\begin{lemma} \label{reallattice}
				Let $\Lambda\sub \C$ be a lattice with modular invariants $g_2(\Lambda),g_3(\Lambda)\in \R$. Then $\Lambda$ has a $\Z$-basis $\{\omega_1,\omega_2\}$ with $\omega_1\in \R_{>0}$ and $\Im (\omega_2)>0$. Moreover, such $\omega_1$ is unique.
		\end{lemma}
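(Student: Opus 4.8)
The plan is to deduce from $g_2(\Lambda),g_3(\Lambda)\in\R$ that $\Lambda$ is stable under complex conjugation, and then to obtain $\omega_1$ as the positive generator of $\Lambda\cap\R$. First I would note that, writing $g_2(\Lambda)=60\sum_{0\neq\omega\in\Lambda}\omega^{-4}$ and $g_3(\Lambda)=140\sum_{0\neq\omega\in\Lambda}\omega^{-6}$, one has $g_i(\overline\Lambda)=\overline{g_i(\Lambda)}$ for $i=2,3$, so the hypothesis gives $g_i(\overline\Lambda)=g_i(\Lambda)$. Since a lattice in $\C$ is uniquely determined by the pair $(g_2,g_3)$ (see \cite{lan}), this forces $\overline\Lambda=\Lambda$.

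Next I would analyse $\Lambda\cap\R$, a discrete subgroup of $(\R,+)$. It is nonzero: otherwise, for every $\lambda\in\Lambda$ we would have $2\Re(\lambda)=\lambda+\overline\lambda\in\Lambda\cap\R=\{0\}$, hence $\Lambda\subset i\R$, contradicting that a rank $2$ lattice spans $\C$ over $\R$. So $\Lambda\cap\R=\Z\omega_1$ for a unique $\omega_1\in\R_{>0}$. The subgroup $\Z\omega_1$ is moreover \emph{saturated} in $\Lambda$: if $n\lambda\in\Z\omega_1$ with $\lambda\in\Lambda$ and $n\geq1$, then $\lambda\in\Lambda\cap\R=\Z\omega_1$. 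Therefore $\Lambda/\Z\omega_1$ is free of rank $1$, so $\omega_1$ extends to a $\Z$-basis $\{\omega_1,\omega_2'\}$ of $\Lambda$; since $\omega_2'\notin\R$ we have $\Im(\omega_2')\neq0$, and after possibly replacing $\omega_2'$ by $-\omega_2'$ we obtain a basis $\{\omega_1,\omega_2\}$ with $\Im(\omega_2)>0$. For uniqueness, observe that for any basis $\{\omega_1,\omega_2\}$ satisfying the stated sign conditions, an element $a\omega_1+b\omega_2$ with $a,b\in\Z$ is real if and only if $b\,\Im(\omega_2)=0$, i.e. if and only if $b=0$; hence $\Lambda\cap\R=\Z\omega_1$, so $\omega_1$ must be the positive generator of the intrinsically defined group $\Lambda\cap\R$.

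The only non-elementary ingredient is the fact that a complex lattice is recovered from $g_2$ and $g_3$, which I would simply quote; everything else is routine manipulation of lattices. The mildly delicate point is the saturation step, which is precisely what guarantees that $\omega_1$ completes to an honest $\Z$-basis of $\Lambda$ rather than only to a basis of a finite-index sublattice.
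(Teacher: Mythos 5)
Your proof is correct and follows essentially the same route as the paper's: both deduce $\overline{\Lambda}=\Lambda$ from $\overline{g_i(\Lambda)}=g_i(\overline{\Lambda})$ and the fact that a lattice is determined by $(g_2,g_3)$, then take $\omega_1$ to be the positive generator of $\Lambda\cap\R$ and extend it to a basis (the paper via a $\gcd$/minimality argument, you via the equivalent saturation argument). Your write-up is in fact slightly more complete, as it spells out why $\Lambda\cap\R\neq\{0\}$ and why $\omega_1$ is unique, which the paper leaves implicit.
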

			\begin{proof}
				Note that for every lattice $\Lambda$ we have $\overline{g_i(\Lambda)}=g_i(\overline{\Lambda})$ for $i=2,3$ where the bar denotes complex conjugation. Since $g_2(\Lambda),g_3(\Lambda)\in \R$, this implies that $\Lambda=\overline{\Lambda}$. Therefore we can let $\displaystyle \omega_1\coloneqq\min\{|\omega|\colon\omega\in \Lambda\cap \R\setminus\{0\}\}$ (note that this set is always nonempty). If $\{x,y\}$ is a $\Z$-basis for $\Lambda$, then $\omega_1=ax+by$ for some $a,b\in \Z$ and $(a,b)$ must be $1$ by the minimality of $\omega_1$. Thus $\{\omega_1\}$ can be completed to a $\Z$-basis of $\Lambda$ and we have the claim.
		\end{proof}
		Let $\Lambda$ and $\Lambda_{\nu}$ be the period lattices of $(E,\omega_E)$ and $(\n E,\omega_{\n E})$, respectively. These can be identified with $\displaystyle\left\{\int_{\gamma}\omega_E\colon \gamma\in H_1(E,\Z)\right\}$ and $\displaystyle\left\{\int_{\gamma}\omega_{\n E}\colon \gamma\in H_1(\n E,\Z)\right\}$, respectively. Since $K$ is real, complex conjugation acts on $E(\C)$ and consequently induces involutions $\iota,\iota_{\nu}$ on $H_1(E,\Z)$ and $H_1(\n E,\Z)$, respectively. Therefore it is possible to find bases $\{\gamma_1,\gamma_2\}$ of $H_1(E,\Z)$ and $\{\gamma_{1,\nu},\gamma_{2,\nu}\}$ of $H_1(\n E,\Z)$ such that $\iota(\gamma_1)=\gamma_1$ and $\iota_{\nu}(\gamma_{1,\nu})=\gamma_{1,\nu}$. Let
		$$\omega_1\coloneqq \int_{\gamma_1}\omega_E\mbox{ and }\omega_{1,\nu}\coloneqq \int_{\gamma_{1,\nu}}\omega_{\n E}.$$
		Then $\omega_1,\omega_{1,\nu}$ are the unique positive real elements of $\Lambda$ and $\Lambda_{\nu}$ respectively which can be completed to bases as in Lemma \ref{reallattice}.
			\begin{definition}
				The positive real numbers $\omega_1$ and $\omega_{1,\nu}$ are called the \emph{real periods} of $(E,\omega_E)$.
		\end{definition}
		Since we have $\alpha\Lambda\sub \Lambda_{\nu}$, there exist $a,b\in \Z$ such that $\alpha\omega_1=a\omega_{1,\nu}+b\omega_{2,\nu}$. Since $\omega_1,\omega_{1,\nu},\alpha$ are real, $b$ must be equal to $0$. Therefore the following relation holds, for some non-zero integer $a$:
			\begin{equation}\label{realperiods}
				\omega_{1,\nu}=\frac{\alpha}{a}\omega_1.
		\end{equation}
		Note that $a$ divides $m$ because $\alpha\Lambda$ is a sublattice of $\Lambda_{\nu}$ of index $m$.
		We are now ready to compute with \eqref{Lvaluequadratic2}. Let $t\coloneqq |E(K)_{\text{tors}}|$. Then we have
			\begin{equation}\label{intermLvalue}
				t^2\cdot L(E/K,1)=\end{equation}
				$$=\frac{2+\kappa +\eta\s\kappa}{4}\cdot\frac{1}{\gamma^2}\cdot\left(t\cdot\int_{\{0,i\infty\}}\pi^*(\omega_E)\right)^2+\frac{2-\kappa -\eta\s\kappa}{4}\cdot\frac{1}{\n\gamma^2}\cdot\left(t\cdot\int_{\{0,i\infty\}}\n\pi^*(\omega_{\n E})\right)^2.$$
		Now use the fact that $\displaystyle t\cdot\int_{\{0,i\infty\}}\pi^*(\omega_E)=\int_{t\cdot \pi_*\{0,i\infty\}}\omega_E$. Then note that $\pi_*\{0,i\infty\}\in H_1(E,\Q)$ by Theorem \ref{mandri} and so by Lemma \ref{cuspdef} we have $\pi(0)-\pi(i\infty)\in E(K)_{\text{tors}}$. This implies that $t\cdot\pi_*\{0,i\infty\}\in H_1(E,\Z)$; moreover points on the imaginary axis in $\mathcal H$ are defined over $\R$ because complex conjugation on $X_1(N)$ corresponds to reflection with respect to the imaginary axis and the parametrization $\pi$ is defined over $\R$ once we have chosen an embedding $K\to \R$. Thanks to these remarks, we can say that $t\cdot\pi_*\{0,i\infty\}\in H_1(E,\Z)$ is invariant under complex conjugation and therefore there exists an integer $M$ such that
		$$\int_{t\cdot \pi_*\{0,i\infty\}}\omega_E=M\omega_1.$$
		The above argument can be repeated analogously for $\n\pi$, implying the existence of an integer $M'$ such that
		$$\int_{t\cdot \n\pi_*\{0,i\infty\}}\omega_{\n E}=M'\omega_{1,\nu}.$$
		Despite the fact that the argument used for $\n\pi$ is the same as the one used for $\pi$, the integers $M$ and $M'$ do not seem to be deeply related; in the example of level $229$ cited in \cite[p.~499]{gola} one has $M\neq 0$ while $M'=0$ (setting $f=f_2$ and $\s f=f_1$ in the notation of the paper). This is due to the fact that the eigenvalue of the Fricke involution $W_{229}$ applied to $f$ is exactly our $\kappa$, causing $\int_0^{i\infty}\n h(t)dt$, and consequently $M'$, to vanish. On the other hand one can check that $L(f,1)\cdot L(\s f,1)$ is non-zero, implying that $M\neq 0$.

		Finally, note that $\displaystyle \eta\s\kappa=\frac{\sqrt{\Delta_K}}{\s\beta}$ so that
		$$\frac{2+\kappa+\eta\s\kappa}{4}=\frac{2+\sqrt{\Delta_K}\left(\dfrac{\beta+\s\beta}{\Delta_K}\right)}{4}=\frac{\alpha}{2q\sqrt{\Delta_K}}$$
		and symmetrically
		$$\frac{2-\kappa-\eta\s\kappa}{4}=-\frac{\n\alpha}{2q\sqrt{\Delta_K}}.$$
		Substituting everything in \eqref{intermLvalue} and keeping \eqref{realperiods} and Lemma \ref{gammarational} in mind we get
			\begin{equation*}\begin{split}
				t^2\cdot L(E/K,1) & =\pm\frac{1}{2q\sqrt{\Delta_K}N(\gamma)}\left(\alpha\cdot M^2\cdot \omega_1^2-\n\alpha\cdot M'\cdot \omega_{1,\nu}^2\right)=\\
				& =\pm\frac{\omega_1\omega_{1,\nu}}{2q\sqrt{\Delta_K}N(\gamma)}\cdot \left(aM^2-\frac{m}{a}M'^2\right),
				\end{split}
		\end{equation*}
		which shows that $\displaystyle L(E/K,1)\cdot \frac{\sqrt{\Delta_K}}{\omega_1\omega_{1,\nu}}\in \Q$, and since $\displaystyle \frac{m}{a}\in \Z$ we get that
			\begin{equation}\label{realquadbound}
				L(E/K,1)=\frac{\omega_1\omega_{1,\nu}}{2q|E(K)_{\text{tors}}|^2\sqrt{\Delta_K}}\cdot \frac{w}{|N(\gamma)|} \mbox{ for some }w\in \Z.
		\end{equation}

		\subsection{$K$ is imaginary}

		The first observation in this case is that
		$$\frac{(1-\kappa)(1-\eta\s\kappa)}{4}L(\n h,1)^2=\overline{\frac{(1+\kappa)(1+\eta\s\kappa)}{4}L(h,1)^2},$$
		so that equation \eqref{Lvaluequadratic2} can be read as
				$$L(E/K,1)=2\Re\left(\frac{(1+\kappa)(1+\eta\s\kappa)}{4}L(h,1)^2\right).$$
		The same argument with $t\coloneqq |E(K)_{\text{tors}}|$ applies as in the case $\Delta_K>0$, so that we have $t\cdot\pi_*\{0,i\infty\}\in H_1(E,\Z)$. Let $\Lambda=\Z\omega_1+\Z\omega_2$ be the period lattice of $(E,\omega_E)$. We can assume that $\omega_{\n E}$ is such that $\overline{\Lambda}=\Z\overline{\omega_1}+\Z\overline{\omega_2}$ is the period lattice of $(\n E,\omega_{\n E})$. Thus there exist $a,b,c,d\in \Z$ such that
		$$\begin{cases}
			 \alpha\omega_1=a\overline{\omega_1}+b\overline{\omega_2} & \\
			\alpha\omega_2=c\overline{\omega_1}+d\overline{\omega_2}. 
		\end{cases}$$
		This time we have
		$$\int_{t\cdot \pi_*\{0,i\infty\}}\omega_E=(x\omega_1+y\omega_2)$$
		for some $x,y\in \Z$ and thus we get
			\begin{equation*}\begin{split}
				t^2\cdot L(E/K,1)& =\Re\left(\pm\frac{\alpha}{2q\sqrt{\Delta_K}}\cdot\frac{1}{N(\gamma)}\cdot(x\omega_1+y\omega_2)^2\right)=\\
				& =\pm\frac{2}{2q\sqrt{|\Delta_K|}N(\gamma)}\Im((x\omega_1+y\omega_2)(x(a\overline{\omega_1}+b\overline{\omega_2})+y(c\overline{\omega_1}+d\overline{\omega_2})))=\\
				& =\pm \frac{2\Im(\omega_1\overline{\omega_2})}{2q\sqrt{|\Delta_K|}N(\gamma)}\cdot (xy(a-d)+y^2c-x^2b),
			\end{split}
		\end{equation*}
		where we used the fact that $\sqrt{\Delta_K}$ is purely imaginary. Since $xy(a-d)+y^2c-x^2b\in \Z$ we get $\displaystyle L(E/K,1)\cdot\frac{\sqrt{|\Delta_K|}}{2\Im(\omega_1\overline{\omega_2})}\in \Q$ and therefore
			\begin{equation}\label{imgquadbound}
				L(E/K,1)=\frac{2\Im(\omega_1\overline{\omega_2})}{2q|E(K)_{\text{tors}}|^2\sqrt{|\Delta_K|}}\cdot \frac{w}{N(\gamma)}\mbox{ for some } w\in \Z.
		\end{equation}
		The term $\Im(\omega_1\overline{\omega_2})$ coincides (in absolute value) with the covolume of $\Lambda$.
		\section{The Manin ideal}\label{manid}
		The factor $N(\gamma)$ in \eqref{realquadbound} and \eqref{imgquadbound} should play a similar role to the one played by the Manin constant $c$ of \eqref{stima}. Let $\mathcal E$ be the N\'{e}ron model of $E$ over $\mathcal O_K$. Then $H^0(\mathcal E,\Omega_{\mathcal E/\mathcal O_K}^1)$ is a locally free $\mathcal O_K$-module of rank $1$ inside $H^0(E,\Omega_{E/K}^1)$. In \cite{gola}, the authors introduce the following fractional ideal attached to a parametrization $\pi\colon X_1(N)\to E$ over $K$.
			\begin{definition}
				The \emph{Manin ideal} $\mathfrak c(\pi)$ attached to the parametrization $\pi$ is the fractional ideal of $K$ satisfying:
				$$\pi^*H^0(\mathcal E,\Omega_{\mathcal E/\mathcal O_K}^1)=\mathfrak c(\pi)\left(\pi^*H^0(E,\Omega_{E/K}^1)\cap \mathcal O_K\llbracket q\rrbracket\right).$$
		\end{definition}
		If $\omega\in H^0(E,\Omega_{E/K}^1)$ is non-zero, let
		$$\mathfrak m_{\omega}(\pi)=\{x\in K\colon x\cdot \pi^*(\omega)\in \mathcal O_K\llbracket q\rrbracket dq\}.$$
		Following \cite{gola} again, we define the \emph{Weierstrass ideal} attached to the pair $(E,\omega)$ as the fractional ideal of $K$ defined by
				$$\delta_{\omega}=\prod_{\p} \p^{\ord_{\p}(\omega/\omega_{\p})},$$
		where $\p$ varies among all prime ideals of $\mathcal O_K$ and $\omega_{\p}$ is a minimal differential at $\p$.
			\begin{lemma}[\cite{gola}]
				For any non-zero $\omega\in H^0(E,\Omega_{E/K}^1)$ we have
					$$\mathfrak c(\pi)=(\mathfrak m_{\omega}(\pi)\delta_{\omega})^{-1}.$$
		\end{lemma}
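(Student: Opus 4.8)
Both sides of the claimed identity are fractional ideals of $K$, so it is enough to verify the equality after localizing at an arbitrary nonzero prime $\p$ of $\mathcal O_K$; the plan is to fix such a $\p$ and show that $\mathfrak c(\pi)_\p=(\mathfrak m_{\omega}(\pi)\,\delta_{\omega})_\p^{-1}$. The key observation is that every $\mathcal O_K$-module entering the definition of $\mathfrak c(\pi)$ sits inside the one-dimensional $K$-vector space $\pi^*H^0(E,\Omega^1_{E/K})=K\cdot\pi^*(\omega)$ (using $\pi^*(\omega)=\gamma h\neq0$), hence is of the form (fractional ideal)$\cdot\pi^*(\omega)$; so the whole statement reduces to an equality of fractional ideals, checked one prime at a time.

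One then computes the two relevant modules after localizing at $\p$. Since $H^0(\mathcal E,\Omega^1_{\mathcal E/\mathcal O_K})$ is locally free of rank $1$ over the Dedekind ring $\mathcal O_K$, its localization at $\p$ is free of rank $1$ over $\mathcal O_{K,\p}$, generated by a minimal differential $\omega_\p$ at $\p$; writing $\omega=u\,\omega_\p$ with $\ord_\p(u)=\ord_\p(\omega/\omega_\p)$ gives $\bigl(\pi^*H^0(\mathcal E,\Omega^1_{\mathcal E/\mathcal O_K})\bigr)_\p=\mathcal O_{K,\p}\cdot\pi^*(\omega_\p)=(\delta_{\omega})_\p^{-1}\cdot\pi^*(\omega)$. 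On the other hand, intersection of submodules of a fixed module commutes with localization, so $\bigl(\pi^*H^0(E,\Omega^1_{E/K})\cap\mathcal O_K\llbracket q\rrbracket\,dq\bigr)_\p=\{x\in K:x\,\pi^*(\omega)\in\mathcal O_{K,\p}\llbracket q\rrbracket\,dq\}\cdot\pi^*(\omega)$, and writing $\pi^*(\omega)=\sum_{n\ge0}b_nq^n\,dq$ (the $q$-expansion of $\gamma h$) and comparing $\p$-adic valuations of the $b_n$ identifies this ideal with $\mathfrak m_{\omega}(\pi)_\p$. Plugging both computations into the defining relation $\pi^*H^0(\mathcal E,\Omega^1_{\mathcal E/\mathcal O_K})=\mathfrak c(\pi)\bigl(\pi^*H^0(E,\Omega^1_{E/K})\cap\mathcal O_K\llbracket q\rrbracket\,dq\bigr)$ localized at $\p$ and cancelling the common generator $\pi^*(\omega)$ gives $(\delta_{\omega})_\p^{-1}=\mathfrak c(\pi)_\p\,\mathfrak m_{\omega}(\pi)_\p$, which rearranges to the desired local identity; as $\p$ was arbitrary, the identity of fractional ideals follows.

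The only step that is not purely formal is verifying that $\mathfrak m_{\omega}(\pi)$ really is a nonzero fractional ideal, i.e. that the $q$-expansion of $\pi^*(\omega)=\gamma h$ is $\p$-integral for all but finitely many $\p$ and has globally bounded denominators. This is the usual integrality property of $q$-expansions of cusp forms (the Fourier coefficients of the newform $f$, and hence those of the $K$-combination $h$, are algebraic numbers with bounded denominators), and it is already part of the framework of \cite{gola}; granting it, the argument is just the localization bookkeeping above, and I expect that point to be the only one needing genuine care.
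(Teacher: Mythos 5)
Your argument is correct. The paper itself states this lemma without proof (it is quoted from \cite{gola}), but your verification is exactly the intended one: since $\pi^*$ is injective on the one-dimensional space $H^0(E,\Omega^1_{E/K})=K\cdot\omega$, every module in the defining relation of $\mathfrak c(\pi)$ is a fractional ideal times the generator $\pi^*(\omega)$, and the local computation $H^0(\mathcal E,\Omega^1_{\mathcal E/\mathcal O_K})_{\p}=\mathcal O_{K,\p}\cdot\omega_{\p}=\p^{-\ord_{\p}(\omega/\omega_{\p})}_{\phantom{K}}\cdot\omega$ identifies the left-hand side with $\delta_{\omega}^{-1}\cdot\pi^*(\omega)$ while the intersection with $\mathcal O_K\llbracket q\rrbracket\,dq$ is $\mathfrak m_{\omega}(\pi)\cdot\pi^*(\omega)$ by definition, so cancelling $\pi^*(\omega)$ gives the claim. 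You also correctly isolate the one genuinely non-formal input, namely that $\mathfrak m_{\omega}(\pi)$ is an honest (nonzero) fractional ideal, which rests on the bounded denominators of the coefficients of $h$ and is part of the framework of \cite{gola}.
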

		In analogy with Theorem \ref{manconisint}, the following holds.
			\begin{theorem}[\cite{gola}]
				The Manin ideal $\mathfrak c(\pi)$ is an integral ideal.
		\end{theorem}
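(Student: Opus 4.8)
The plan is to reduce the statement, prime by prime, to an integrality property of $q$-expansions, and then to establish that property by imitating Edixhoven's argument for the Manin constant of an elliptic curve over $\Q$ (Theorem \ref{manconisint}). Integrality of $\mathfrak c(\pi)$ is local, so fix a prime $\p$ of $\mathcal O_K$ above a rational prime $p$; I want $\ord_{\p}\mathfrak c(\pi)\geq 0$. I would choose $\omega\in H^0(E,\Omega^1_{E/K})$ minimal at $\p$, i.e.\ with $\ord_{\p}(\omega/\omega_{\p})=0$; then $\ord_{\p}\delta_{\omega}=0$, so the lemma above gives $\ord_{\p}\mathfrak c(\pi)=-\ord_{\p}\mathfrak m_{\omega}(\pi)$. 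Writing $\pi^*(\omega)=\sum_{n\geq 1}b_nq^{n-1}\,dq$ with $b_n\in K$, one has $\ord_{\p}\mathfrak m_{\omega}(\pi)=-\min_n\ord_{\p}(b_n)$, so everything comes down to showing that the pullback under $\pi$ of a $\p$-minimal differential has $\p$-integral $q$-expansion at the cusp $\infty$, i.e.\ $\pi^*(\omega)\in\mathcal O_{K,\p}\llbracket q\rrbracket\,dq$.

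To prove this I would use the factorization $\pi=w\circ j_1$, where $j_1\colon X_1(N)\to A_f$ is the composite of the Albanese morphism based at the $\Q$-rational cusp $\infty$ with the quotient $J_1(N)\to A_f$, and $w=\varphi\circ w_{\beta}\colon A_f\to E$ is a surjection of abelian varieties over $K$. After base change to $\mathcal O_{K,\p}$, the N\'eron mapping property extends $w$ to a morphism $\mathcal A_f\to\mathcal E$ of N\'eron models, whence $w^*(\omega)\in H^0(\mathcal A_f,\Omega^1)\otimes\mathcal O_{K,\p}$; likewise the quotient $J_1(N)\to A_f$ extends to $\mathcal J_1(N)\to\mathcal A_f$, so $w^*(\omega)$ pulls back into $H^0(\mathcal J_1(N),\Omega^1)\otimes\mathcal O_{K,\p}$. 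One then identifies $H^0(\mathcal J_1(N),\Omega^1)$ with the cotangent space of $\mathcal J_1(N)$ at the origin, hence --- by autoduality of the Jacobian and Raynaud's description of the N\'eron model of $\operatorname{Pic}^0$ --- with $H^0(\mathcal X,\omega_{\mathcal X/\mathcal O_{K,\p}})$ for a suitable regular model $\mathcal X$ of $X_1(N)$. Restricting such a section of the dualizing sheaf along the section $\infty$, which lies in the smooth locus of $\mathcal X$ so that there the dualizing sheaf agrees with $\Omega^1$ and the Tate parametrization produces integral $q$-expansions, then yields $\pi^*(\omega)=j_1^*(w^*(\omega))\in\mathcal O_{K,\p}\llbracket q\rrbracket\,dq$, as wanted.

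For $p\nmid N$ this is essentially formal: $X_1(N)$ and $J_1(N)$ have good reduction at $p$, N\'eron models commute with the base change $\Z_p\to\mathcal O_{K,\p}$, and the $q$-expansion principle is the classical one. The hard part will be the primes $\p\mid N$, where $X_1(N)$ has bad reduction: there one genuinely needs fine control of the integral models of $X_1(N)$ --- the semistable models of Deligne--Rapoport when $p\|N$, and the models of Katz--Mazur together with Edixhoven's refinements when $p^2\mid N$ --- both to justify the comparison of $H^0(\mathcal J_1(N),\Omega^1)$ with sections of the dualizing sheaf of a regular model and to check that the cusp $\infty$ remains in the smooth locus; the case $p^2\mid N$ is the delicate one, exactly as for Theorem \ref{manconisint}. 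Throughout, the hypothesis that $E$ is completely defined over $K$ is what guarantees that $w$, $w_{\beta}$ and $\varphi$ are defined over $K$, so that all the N\'eron-model manipulations take place over $\mathcal O_{K,\p}$.
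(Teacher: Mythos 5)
The paper does not actually prove this statement: it is quoted from \cite{gola}, so there is no internal proof to compare against. Your sketch reconstructs essentially the argument used there, which in turn adapts Edixhoven's proof of Theorem \ref{manconisint}: reduce, prime by prime, to the $\p$-integrality of the $q$-expansion of $\pi^*(\omega)$ for $\omega$ minimal at $\p$ (your bookkeeping with $\mathfrak m_{\omega}(\pi)$ and $\delta_{\omega}$ is correct), then obtain that integrality from the N\'eron mapping property together with the $q$-expansion principle on suitable integral models of $X_1(N)$. One point deserves more care than your phrase ``N\'eron models commute with the base change $\Z_p\to\mathcal O_{K,\p}$'': this holds when $J_1(N)$ has good reduction at $p$, but at primes $p\mid N$ --- and note that every prime ramified in $K$ divides $N$ by \eqref{conductor}, so ramified base change does occur exactly in the bad-reduction case --- the N\'eron model over $\mathcal O_{K,\p}$ need not be the base change of the one over $\Z_p$; the argument survives because the canonical morphism from the base change of the $\Z_p$-N\'eron model to the $\mathcal O_{K,\p}$-N\'eron model points in the right direction for pulling back invariant differentials, and $H^0(\cdot,\Omega^1)$ of the N\'eron model is a free module whose formation commutes with flat base change. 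With that caveat, and granting the hard input on integral models at primes with $p^2\mid N$ that you correctly flag, your proposal is the standard and correct route.
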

		The set of pairs $(E,\pi)$, where $E$ is a $\Q$-curve completely defined over $K$ and $\pi\colon X_1(N)\to E$ is a modular parametrization over $K$, can be given the same ordering we used in section \ref{curvesoverQ} for parametrizations over $\Q$: given two parametrizations $(E,\pi)$ and $(E',\pi')$ we say that $(E',\pi')$ \emph{dominates} $(E,\pi)$, and we write $(E',\pi')\geq (E,\pi)$, if there exists an isogeny $\varphi\colon E'\to E$ such that $\pi=\varphi\circ\pi'$. A maximal element with respect to this ordering is called an \emph{optimal parametrization}, and it can be shown that every parametrization factors through an optimal one.

		Conjecture \ref{mancon} is therefore generalized as follows in \cite{gola}.
			\begin{conjecture}[Generalized Manin conjecture]\label{genmancon}
				Let $\pi\colon X_1(N)\to E$ be an optimal parametrization. Then $\mathfrak c(\pi)=(1)$.
		\end{conjecture}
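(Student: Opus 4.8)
Since this is a conjecture, what follows is a sketch of the approach one would take, modelled on the partial results towards the classical Manin conjecture (Conjecture~\ref{mancon}) of Edixhoven \cite{edi}, Mazur \cite{maz} and Abbes--Ullmo \cite{abul}. The Manin ideal $\mathfrak c(\pi)$ is already known to be integral by the theorem of \cite{gola} quoted above, so the conjecture is equivalent to the family of local assertions $\ord_{\p}\mathfrak c(\pi)=0$ as $\p$ ranges over the primes of $\mathcal O_K$, and the plan is to prove these one prime at a time. The main device is the isomorphism of N\'eron models $\res_{\mathcal O_K/\Z}\mathcal E\simeq\mathcal B$ used in Section~\ref{newform} (following \cite{bos}): it converts statements about the integral differential module $H^0(\mathcal E,\Omega^1_{\mathcal E/\mathcal O_K})$ and the pullback $\pi^*$ into statements about the integral models of $J_1(N)$, $A_f$ and $B$ over $\Z$, where the techniques developed for elliptic curves over $\Q$ can be brought to bear.

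First I would dispose of the primes $\p$ lying over a rational prime $p\nmid 2N$. For such $p$ the curve $X_1(N)$ has good reduction, so the N\'eron model $\mathcal J_1(N)$ over $\Z_p$ is an abelian scheme, and $A_f$, $B$ and $E$ have good reduction as well (the last by the conductor formula \eqref{conductor}). The optimality of $\pi$ --- the minimality of the kernel of the composite $J_1(N)\to A_f\to E$ --- forces this composite to extend to a surjection of abelian schemes over $\mathcal O_{K,\p}$; pulling back differentials and using the $q$-expansion principle to identify $H^0(\mathcal J_1(N),\Omega^1)$ with integral modular forms, one finds that $\pi^*$ matches $H^0(\mathcal E,\Omega^1_{\mathcal E/\mathcal O_K})$ with the integral $q$-expansions up to a $\p$-adic unit, whence $\ord_{\p}\mathfrak c(\pi)=0$. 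This is Edixhoven's argument \cite{edi}, carried across the isomorphism $\res_{\mathcal O_K/\Z}\mathcal E\simeq\mathcal B$. The odd primes dividing $N$ should be reachable in the same spirit: at $\p$ of multiplicative reduction one uses the semistable structure, and more generally the tame part of the reduction of $X_1(N)$, combined with the quantitative bounds of \cite{maz} and \cite{abul} applied to $A_f$ through $\res_{\mathcal O_K/\Z}$.

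The genuine obstacle is $\p\mid 2$, mirroring the fact that over $\Q$ it is precisely the $2$-part of the Manin constant that the general arguments fail to pin down. In the present situation the difficulty is aggravated: the model of $X_1(N)$ over $\Z_2$ is badly singular, the component group of the N\'eron model of $J_1(N)$ at $2$ intervenes, and when $2$ ramifies in $K$ the special fibre $\mathcal B_{\F_2}$ acquires a unipotent part --- the copy of $\mathbb G_a$ produced by the filtration of \cite{edi2} used in Section~\ref{newform} --- so that one is no longer comparing differentials of abelian schemes. Carrying out this $2$-adic analysis, namely controlling $\ord_{\p}\mathfrak c(\pi)$ through the special fibre of $\pi$ together with the wild ramification of $\mathcal O_{K,\p}/\Z_2$, is where genuinely new input would be needed, and is the reason the statement remains conjectural.
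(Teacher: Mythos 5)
This statement is a conjecture, not a theorem of the paper: it is quoted from \cite{gola} as the natural generalization of Conjecture \ref{mancon}, and the paper never attempts a proof --- it is only \emph{assumed} in part ii) of Theorem \ref{mainthm}. So there is no proof in the paper to compare yours against, and you correctly flag your text as a strategy sketch rather than a proof. Judged as a proof attempt, however, it has genuine gaps beyond the admitted difficulty at $\p\mid 2$. The central step for $p\nmid 2N$ --- that optimality of $\pi$ ``forces the composite to extend to a surjection of abelian schemes'' and that the $q$-expansion principle then matches $\pi^*H^0(\mathcal E,\Omega^1_{\mathcal E/\mathcal O_K})$ with integral $q$-expansions ``up to a $\p$-adic unit'' --- is precisely the assertion $\ord_\p\mathfrak c(\pi)=0$ at that prime; it is stated, not derived. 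Even in the classical setting over $\Q$, the results of \cite{edi}, \cite{maz}, \cite{ars} and \cite{abul} do not prove Conjecture \ref{mancon} away from $2$: they bound the set of primes that can divide the Manin constant (small primes and, in general, primes of additive reduction remain problematic), and the conjecture is open in full generality over $\Q$. Your sketch therefore overstates what the known techniques give, both in claiming that all odd $\p\nmid 2$ ``should be reachable in the same spirit'' and in locating the sole obstruction at $2$.

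There is also a structural issue specific to the quadratic setting that the sketch does not address. The parametrization $\pi\colon X_1(N)\to E$ and the notion of optimality used in Section \ref{manid} are $K$-rational: $\pi$ factors through the building-block projection $w_\beta\in\en_L(A_f)$, which is defined over $K$ but not over $\Q$, and optimality is maximality among parametrizations of curves in the $K$-isogeny class. Passing through $\res_{\mathcal O_K/\Z}\mathcal E\simeq\mathcal B$ converts $E$ into a $\Q$-object, but it does not convert the $K$-rational map $\pi$ (nor its optimality) into a statement about a $\Q$-rational optimal quotient of $J_1(N)$ to which Edixhoven's or Mazur's arguments apply verbatim; the Manin \emph{ideal} can moreover fail to be generated by a rational integer, which is exactly why \cite{gola} had to reformulate the conjecture in terms of fractional ideals of $K$. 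Bridging this descent issue, and handling the primes of bad (in particular additive) reduction and the ramified primes where $\mathcal B_{\F_p}$ has the unipotent part exhibited in Section \ref{newform}, is where new input is required; none of this is supplied by the sketch, which is consistent with the statement remaining a conjecture here and in \cite{gola}.
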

		
		\subsection{The term $N_{K/\Q}(\gamma)$}
		Let us come back to our parametrization $\pi\colon X_1(N)\to E$ defined over $K$. Recall that $\pi^*(\omega_E)=\gamma\cdot h$. Now let $\pi'\colon X_1(N)\to E'$ be an optimal parametrization and $\psi\colon E'\to E$ a $K$-isogeny such that $\pi=\psi\circ \pi'$. Note that, as we did in section \ref{curvesoverQ}, we can assume that $\psi$ is an isogeny of minimal degree in $\Hom(E',E)$. In fact, let $\varphi$ be an element of minimal degree in $\Hom(E',E)$. Then there exists some integer $k$ such that $\psi=k\varphi$. Let $\overline{\pi}\coloneqq \varphi\circ\pi'$. Since $\pi=[k]\circ\overline{\pi}$, where $[k]$ denotes multiplication by $k$, we have $\displaystyle \overline{\pi}^*(\omega_E)=\frac{\gamma}{k}h$. Thus we could replace $\pi$ by $[k]\circ\overline{\pi}$ in our computations which led to \eqref{realquadbound} and \eqref{imgquadbound}, and the only effect in these estimates would be to replace $\gamma$ by $\gamma/k$, which multiplies the value of $L(E/K,1)$ by $k^2$. Therefore we can assume $\psi=\varphi$.
		The next step is to understand how $\mathfrak c(\pi)$ and $\mathfrak c(\pi')$ are related. Note that
		$$\mathfrak m_{\psi^*(\omega_E)}(\pi')=\{x\in K\colon x\cdot \pi'^*(\varphi^*(\omega_E))\in \mathcal O_K\llbracket q\rrbracket\}=\mathfrak m_{\omega_E}(\pi),$$
		so that
		$$\mathfrak c(\pi)=\mathfrak c(\pi')\delta_{\psi^*(\omega_E)}\delta_{\omega_E}^{-1}.$$
		If we set $\widetilde{\omega}_E\coloneqq \frac{1}{\gamma}\omega_E$, then we have $\pi^*(\widetilde{\omega}_E)=h$ and therefore $\mathfrak m_{\widetilde{\omega}_E}(\pi)$ coincides with the denominator ideal of $h$, i.e. the ideal
		$$D_h=\{x\in K\colon x\cdot h\in \mathcal O_K\llbracket q\rrbracket\}.$$
		Note that this is an integral ideal because $h$ is normalized. Thus we have
			\begin{equation}\label{denomideal}
				N_{K/\Q}(D_h)=N_{K/\Q}(\mathfrak m_{\widetilde{\omega}_E}(\pi))=N_{K/\Q}(\mathfrak c(\pi')^{-1}\delta_{\psi^*(\widetilde{\omega}_E)}^{-1})=\frac{1}{N_{K/\Q}(\delta_{\psi^*(\widetilde{\omega}_E)})}
		\end{equation}
		under conjecture \ref{genmancon}. It is easy to see that
		$$\delta_{\psi^*(\widetilde{\omega}_E)}=\prod_{\p}\p^{\ord_{\p}(\psi^*(\widetilde{\omega}_E)/\omega_{\p}')}=\prod_{\p}\p^{-\ord_{\p}(\gamma)}\cdot \prod_p\p^{\ord_{\p}(\psi^*(\omega_E)/\omega_{\p}')},$$
		where $\omega_{\p}'$ is a minimal differential at $\p$ on $E'$, and thus
		$$N_{K/\Q}(\delta_{\psi^*(\widetilde{\omega}_E)})=\frac{N_{K/\Q}(\delta_{\psi^*(\omega_E)})}{N_{K/\Q}(\gamma)},$$
		where $N_{K/\Q}(\gamma)$ is the norm of the fractional ideal generated by $\gamma$, which coincides with the absolute value of the norm of the element $\gamma$.
		By \eqref{denomideal} we get
			\begin{equation} \label{normofgamma1}
				\frac{1}{N_{K/\Q}(\gamma)}=\frac{1}{N_{K/\Q}(D_h)N_{K/\Q}(\delta_{\psi^*(\omega_E)})}.
		\end{equation}
		Next we claim that there exists an integer $v>0$ such that
			\begin{equation} \label{normofdelta}
				N_{K/\Q}(\delta_{\psi^*(\omega_E)})\cdot v= N_{K/\Q}(\delta_{\omega_E})N_{K/\Q}(\deg\psi).
		\end{equation}
		Let $\omega'$ be a differential on $E'$ and let $a,b\in K$ be such that $\psi^*(\omega_E)=a\omega'$ and $\widehat{\psi}^*(\omega')=b\omega_E$ where $\widehat{\psi}$ is the dual isogeny, so that $ab=\deg\psi$. For each prime $\p$ of $K$ let $a_{\p},b_{\p}\in K$ be such that $\omega_E=a_{\p}\omega_{\p}$ and $\omega'=b_{\p}\omega_{\p}'$. With these notations we have
		$$\widehat{\psi}^*(\omega_{\p}')=\frac{ba_{\p}}{b_{\p}}\omega_{\p}.$$
		By the functoriality of the N\'{e}ron model, the pullback of an integral differential is integral. Thus we have
			\begin{equation}\label{due}
				\ord_{\p}\left(\frac{ba_{\p}}{b_{\p}}\right)\geq 0 \mbox{ for all } \p.
		\end{equation}
		Now
		$$\delta_{\psi^*(\omega_E)}=\prod_{\p}\p^{\ord_{\p}(\psi^*(\omega_E)/\omega_{\p}')}=\prod_{\p}\p^{\ord_{\p}(ab_{\p}\omega_{\p}'/\omega_{\p}'))}=\prod_{\p}\p^{\ord_{\p}(ab_{\p})},$$
		while
		$$\delta_{\omega_E}=\prod_{\p}\p^{\ord_{\p}(\omega_E/\omega_{\p})}=\prod_{\p}\p^{\ord_{\p}(a_{\p})}.$$
		Therefore the claim \eqref{normofdelta} is proved if for all $\p$ we have
		$$\ord_{\p}(ab_{\p})\leq\ord_{\p}(a_{\p})+\ord_{\p}(\deg\psi).$$
		In fact we can rewrite this condition, using the fact that $\ord_{\p}(a)+\ord_{\p}(b)=\ord_{\p}(\deg\psi)$, as
		$$\ord_{\p}(b)+\ord_{\p}(a_{\p})-\ord_{\p}(b_{\p})\geq 0,$$
		which is exactly \eqref{due}. Finally equation \eqref{normofdelta} together with \eqref{normofgamma1} implies that
			\begin{equation}\label{normofgamma}
				\frac{1}{N_{K/\Q}(\gamma)}=\frac{v}{N_{K/\Q}(D_h)N_{K/\Q}(\delta_{\omega_E})N_{K/\Q}(\deg\psi)} \mbox{ for some } v\in \Z_{>0}.
		\end{equation}

		\section{Completing the proof}\label{finbnd}
		The last two things we are left to understand are the norm of the denominator ideal $D_h$ and the degree of the isogeny $\psi$.

		\subsection{The denominator ideal $D_h$}

		We will now compute the denominator ideal $D_h$, which is integral as we already noticed. Recall that
		$$h=\frac{1}{1+\kappa}f+\frac{\kappa}{1+\kappa}\s f,$$
		where $\displaystyle \kappa=\frac{\sqrt{\Delta_K}}{\beta}$. Let $\displaystyle f=\sum_{n=1}^{+\infty}a_nq^n$ and $\displaystyle h=\sum_{n=1}^{+\infty}\lambda_nq^n$. Then for every $n\geq 1$ we have
		$$\lambda_n=\frac{1}{1+\kappa}a_n+\frac{\kappa}{1+\kappa}\s a_n.$$
		Recall that since $f$ is a normalized newform, the $a_n$'s are algebraic integers, so they belong to $\mathcal O_F$. If $m\equiv 2,3\bmod 4$ then every $a_n$ is of the form $a+b\sqrt{m}$ for some $a,b\in \Z$. Thus we have
		\begin{equation*}\begin{split}
		\lambda_n-a & =\frac{1-\kappa}{1+\kappa}\cdot b\sqrt{m}=\frac{\sqrt{\Delta_K}-\s\beta}{\sqrt{\Delta_K}+\s\beta}\cdot b\sqrt{m}=\frac{q\sqrt{\Delta_K}-p+\sqrt{m}}{q\sqrt{\Delta_K}+p-\sqrt{m}}\cdot b\sqrt{m}\\
		 & =\frac{-\n\alpha+\sqrt{m}}{\alpha+\sqrt{m}}\cdot b\sqrt{m}=\frac{(-\n\alpha+\sqrt{m})(\alpha+\sqrt{m})}{\alpha^2-m}\cdot b\sqrt{m}\\
			& =\frac{(\alpha-\n\alpha)\sqrt{m}}{\alpha^2-m}\cdot b\sqrt{m}=\frac{bm}{\alpha}=b\n\alpha,
		\end{split}
		\end{equation*}
		using the fact that $\alpha\n\alpha=m$.
		If $m\equiv 1\bmod 4$ and $\displaystyle a_n=a+b\left(\frac{1+\sqrt{m}}{2}\right)$ for some $a,b\in \Z$, one sees in the same way that
		$$\lambda_n=a+\frac{b}{2}+\frac{b\cdot\n\alpha}{2}.$$
		Let $D_{\n\alpha}=\{x\in \mathcal O_K\colon x\cdot \n\alpha\in \mathcal O_K\}$ be the denominator ideal of $\n\alpha$, which clearly coincides with $\n D_{\alpha}$. Then what we have shown is that if $m\equiv 2,3\bmod 4$ then $\n D_{\alpha}\sub D_h$, while if $m\equiv 1\bmod 4$ then $2 \cdot \n D_{\alpha}\sub D_h$. Since $N_{K/\Q}(D_{\alpha})=N_{K/\Q}(\n D_{\alpha})$, this gives us the following useful lemma.
			\begin{lemma}\label{denomid}
				Let $D_h$ be the denominator of $h$. Then we have the following two cases:
				$$\begin{dcases}
					 \mbox{if } m\equiv 2,3\bmod 4 & \mbox{then } N_{K/\Q}(D_h)\mid N_{K/\Q}(D_{\alpha})\\
					\mbox{if } m\equiv 1\bmod 4 & \mbox{then } N_{K/\Q}(D_h)\mid 4N_{K/\Q}(D_{\alpha}).
				\end{dcases}$$
		If in particular $\alpha\in \mathcal O_K$, then:
				$$\begin{cases}
					 \mbox{if } m\equiv 2,3\bmod 4 & \mbox{then } N_{K/\Q}(D_h)=1\\
					 \mbox{if } m\equiv 1\bmod 4 & \mbox{then } N_{K/\Q}(D_h)\in\{1,2,4\}.
				\end{cases}$$
		\end{lemma}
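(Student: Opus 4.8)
The essential computation has already been carried out in the paragraph preceding the statement, so the plan is to package it into divisibilities of ideal norms. First I would record precisely what that computation yields. Writing a Fourier coefficient $a_n$ of $f$ as $a+b\sqrt m$ when $m\equiv 2,3\bmod 4$ (resp.\ as $a+b(1+\sqrt m)/2$ when $m\equiv 1\bmod 4$), with $a,b\in\Z$, the key algebraic identity $\frac{1-\kappa}{1+\kappa}\sqrt m=\n\alpha$ --- which comes from substituting the chosen splitting value $\beta(\sigma)=p/q+(1/q)\sqrt m$ and $\alpha=p+q\sqrt{\Delta_K}$, clearing denominators, and using $\alpha\n\alpha=m$ --- gives $\lambda_n=a+b\n\alpha$ (resp.\ $\lambda_n=a+b/2+b\n\alpha/2$) for every $n\geq 1$. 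Let $\n D_\alpha=\{x\in\mathcal O_K\colon x\n\alpha\in\mathcal O_K\}$ denote the $\nu$-conjugate of the denominator ideal of $\alpha$. The two formulas then show that an element of $\n D_\alpha$ clears every $\lambda_n$ when $m\equiv 2,3\bmod 4$, while twice such an element clears every $\lambda_n$ when $m\equiv 1\bmod 4$; that is, $\n D_\alpha\subseteq D_h$ in the first case and $2\,\n D_\alpha\subseteq D_h$ in the second.

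Next I would pass to norms. Since $\lambda_1=1$, any $x$ with $xh\in\mathcal O_K\llbracket q\rrbracket$ already lies in $\mathcal O_K$, so $D_h$ is an integral ideal; and an inclusion $I\subseteq J$ of nonzero fractional ideals of $K$ forces $J\mid I$, hence $N_{K/\Q}(J)\mid N_{K/\Q}(I)$. Feeding the two inclusions above into this, and using that $\nu$ preserves ideal norms (so $N_{K/\Q}(\n D_\alpha)=N_{K/\Q}(D_\alpha)$) and that $N_{K/\Q}((2))=4$, I obtain $N_{K/\Q}(D_h)\mid N_{K/\Q}(D_\alpha)$ when $m\equiv 2,3\bmod 4$ and $N_{K/\Q}(D_h)\mid 4\,N_{K/\Q}(D_\alpha)$ when $m\equiv 1\bmod 4$. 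For the final assertion I would then specialise: if $\alpha\in\mathcal O_K$ then $\n\alpha\in\mathcal O_K$ as well, so $D_\alpha=(1)$ and $N_{K/\Q}(D_\alpha)=1$; the two divisibilities become $N_{K/\Q}(D_h)\mid 1$, forcing $N_{K/\Q}(D_h)=1$, and $N_{K/\Q}(D_h)\mid 4$, forcing $N_{K/\Q}(D_h)\in\{1,2,4\}$ because $D_h$ is integral and $1,2,4$ exhaust the positive divisors of $4$.

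I do not expect a genuine obstacle. The only step that needs care is the algebraic identity $\frac{1-\kappa}{1+\kappa}\sqrt m=\n\alpha$ (and its $m\equiv 1\bmod 4$ analogue): a short but slightly fiddly manipulation that hinges on the specific choice of splitting map made in terms of $\alpha$ in section \ref{mainsect} (and on $q\neq 0$, which holds since $m$ is not a square). Everything afterwards is routine bookkeeping with ideal norms; the single point worth stressing is the appeal to the normalization $\lambda_1=1$, which is what guarantees that $D_h$ is integral and hence that the list $\{1,2,4\}$ is genuinely exhaustive rather than just an upper bound on the divisors that could occur.
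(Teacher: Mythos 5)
Your proposal is correct and follows essentially the same route as the paper: the identity $\frac{1-\kappa}{1+\kappa}\sqrt{m}=\n\alpha$ yields $\lambda_n=a+b\n\alpha$ (resp.\ $a+b/2+b\n\alpha/2$), hence $\n D_\alpha\subseteq D_h$ (resp.\ $2\,\n D_\alpha\subseteq D_h$), and the norm divisibilities follow since $N_{K/\Q}(\n D_\alpha)=N_{K/\Q}(D_\alpha)$. The integrality of $D_h$ via the normalization of $h$ is also exactly the paper's justification for the exhaustive list $\{1,2,4\}$.
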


		\subsection{The isogeny $\psi$}
                \label{isogenygraph}

		The factor $N_{K/\Q}(\deg\psi)={(\deg\psi)}^2$ can be treated exactly in the same way as in the case of curves over $\Q$. Recall that $\psi$ is an isogeny of minimal degree in $\Hom_{K}(E',E)$. Since we might not be able to find the curve $E'$, we bound $\deg\psi$ in the following way. Let $\{E_1,\ldots,E_n\}$ be the $K$-isogeny class of $E$. For each $i=1,\ldots,n$ let $\displaystyle s_i\coloneqq \min_{\varphi}\{\deg\varphi\colon\varphi\in\Hom_K(E_i,E)\}$ and $s\coloneqq\gcd(s_i\colon i=1,\ldots,n)$. Then clearly $\deg\psi$ divides $s$. Finding the value of $s$ can be done algorithmically as illustrated in \cite{bil}. The author provides an algorithm which allows us, given an elliptic curve $C$ over a number field $K$, to compute the finite set of rational primes $\{p_1,\ldots,p_r\}$ such that $C$ admits a $K$-isogeny of degree $p_i$ for every $i$. Repeating this procedure a finite number of times allows us to draw a graph called the \emph{isogeny graph} whose vertices correspond to $\{E_1,\ldots,E_ n\}$ and such that for $i\neq j$ there is an edge from $E_i$ to $E_j$ if and only if there is an isogeny of prime degree between $E_i$ and $E_j$. This is a (weighted, undirected) connected graph because every isogeny can be decomposed as a chain of isogenies of prime degree. 
		\begin{remark}
			Assume that $E$ and $\n E$ are not isomorphic. Since being a $\Q$-curve is an invariant condition under isogeny and by assumption no curve in the isogeny class of $E$ is defined over $\Q$, the isogeny graph of $E$ has an even number of vertices, call them $\{E_1,\ldots,E_{2n}\}$. These can be labeled in the following way: we assume $E=E_1$ and for every $i=1,\ldots,n$ we set $E_{n+i}=\n E_i$. Then in order to find $s$ it is enough to consider the subgraph $\{E_1,\ldots,E_n\}$ because if any curve $E_{n+i}\in\{E_{n+1},\ldots,E_{2n}\}$ admits an optimal parametrization, then so does $E_i$: it is enough to consider the conjugate parametrization.
		\end{remark}

		\section{The main theorem}\label{mainsec}
		Let us now collect all the ingredients we have in order to be able to state our result in a more compact way.

		Let $K$ be a quadratic number field of discriminant $\Delta_K$ with Galois group $\{1,\nu\}$. Let $E/K$ be a $\Q$-curve with no CM, completely defined over $K$ and not isogenous to an elliptic curve defined over $\Q$. Let $\mu\colon E\to \n E$ be an isogeny and let $m$ be the integer such that $\n\mu \mu$ coincides with multiplication by $m$. Let $\omega_E$ be a invariant differential on $E$ and let $\omega_{\n E}$ be an invariant differential on $\n E$ such that $\mu^*(\omega_{\n E})=\alpha\cdot\omega_E$, where $\alpha=p+q\sqrt{\Delta_K}\in K$ has norm $m$. Let $D_{\alpha}=\{x\in \mathcal O_K\colon x\cdot \alpha\in \mathcal O_K\}$ be the denominator ideal of $\alpha$. Let $\delta_{\omega_E}$ be the Weierstrass ideal of $(E,\omega_E)$.
		
		Let $\omega_1,\omega_{1,\nu}$ be the (positive) real periods of $(E,\omega_E)$ if $K$ is real and let $\{\omega_1,\omega_2\}$ be a basis for the period lattice of $(E,\omega_E)$ such that $\Im(\omega_1\overline{\omega_2})>0$ if $K$ is imaginary. Define
		$$\Omega_E\coloneqq\begin{dcases}\frac{\omega_1\cdot\omega_{1,\nu}}{N_{K/\Q}(\delta_{\omega_E})} & \mbox{if $K$ is real}\\ \frac{2\Im(\omega_1\overline{\omega_2})}{N_{K/\Q}(\delta_{\omega_E})} & \mbox{if $K$ is imaginary.} \end{dcases}$$
		Notice that the product formula implies that $\Omega_E$ does not depend on $\omega_E$.
		\begin{remark}
			If $\omega_E$ is a global minimal differential on $E$, one has that $\delta_{\omega_E}=(1)$. This justifies the fact that in section \ref{curvesoverQ} we omitted the term coming from $\delta_{\omega_E}$ in the definition of $\Omega_E$ for elliptic curves over $\Q$. The two definitions are therefore consistent.
		\end{remark}

		Finally, let $s$ be the positive integer determined in section \ref{isogenygraph}. 
			\begin{theorem}\label{mainthm}
				Let the notation be as above. Then the following hold:
				\begin{enumerate}
				\item[i)] If $L(E/K,1)\neq 0$, then $\displaystyle L(E/K,1)\cdot\frac{\sqrt{|\Delta_K|}}{\Omega_E}\in \Q^*$.
				\item[ii)] Assume conjecture \ref{genmancon}; let $\mu^*(\omega_{\n E})=(p+q\sqrt{\Delta_K})\omega_E$ with $p,q\in \Q$, let $s$ be the $\text{lcm}$ of the minimal isogeny degrees between curves in the isogeny class of $E$ and let $t=4$ if $m\equiv 1 \bmod 4$ and $t=1$ otherwise. Then:
				$$L(E/K,1)\cdot \frac{\sqrt{|\Delta_K|}}{\Omega_E}\cdot 2q\cdot |E(K)_{\text{tors}}|^2\cdot  t\cdot N_{K/\Q}(D_{\alpha})\cdot s^2\in \Z.$$
				\end{enumerate}
		\end{theorem}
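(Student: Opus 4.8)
The plan is to combine the special-value formulas \eqref{realquadbound} and \eqref{imgquadbound} with the bound \eqref{normofgamma} on $N_{K/\Q}(\gamma)$, Lemma~\ref{denomid}, and the definition of $\Omega_E$; once the various normalisations are matched up, both parts of the theorem follow from a short divisibility check.

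First I would prove i). Assume $K$ is real (the imaginary case is word-for-word the same after replacing $\omega_1\omega_{1,\nu}$ by $2\Im(\omega_1\overline{\omega_2})$ and $\sqrt{\Delta_K}$ by $\sqrt{|\Delta_K|}$). Dividing \eqref{realquadbound} by $\omega_1\omega_{1,\nu}$, multiplying by $\sqrt{\Delta_K}\cdot N_{K/\Q}(\delta_{\omega_E})$ and recalling $\Omega_E=\omega_1\omega_{1,\nu}/N_{K/\Q}(\delta_{\omega_E})$ gives
$$L(E/K,1)\cdot\frac{\sqrt{|\Delta_K|}}{\Omega_E}=\frac{w\cdot N_{K/\Q}(\delta_{\omega_E})}{2q\cdot|E(K)_{\text{tors}}|^2\cdot N_{K/\Q}(\gamma)},$$
where $w\in\Z$ and $N_{K/\Q}(\gamma)$, the norm of the ideal $(\gamma)$, equals $|N(\gamma)|$. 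All factors on the right are rational and nonzero (recall $q\neq 0$, as $m$ is not a square), so the left side lies in $\Q$, and it is nonzero precisely because $L(E/K,1)\neq 0$. This proves i), and uses no conjecture.

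For ii) I would insert \eqref{normofgamma}: under Conjecture~\ref{genmancon} there is an integer $v>0$ with $N_{K/\Q}(\gamma)\cdot v=N_{K/\Q}(D_h)\cdot N_{K/\Q}(\delta_{\omega_E})\cdot N_{K/\Q}(\deg\psi)$. Substituting into the identity above, the factor $N_{K/\Q}(\delta_{\omega_E})$ cancels and
$$L(E/K,1)\cdot\frac{\sqrt{|\Delta_K|}}{\Omega_E}=\frac{w\,v}{2q\cdot|E(K)_{\text{tors}}|^2\cdot N_{K/\Q}(D_h)\cdot N_{K/\Q}(\deg\psi)}.$$
Multiplying through by $2q\cdot|E(K)_{\text{tors}}|^2\cdot t\cdot N_{K/\Q}(D_\alpha)\cdot s^2$ clears $2q\cdot|E(K)_{\text{tors}}|^2$ and leaves
$$L(E/K,1)\cdot\frac{\sqrt{|\Delta_K|}}{\Omega_E}\cdot 2q\cdot|E(K)_{\text{tors}}|^2\cdot t\cdot N_{K/\Q}(D_\alpha)\cdot s^2=\frac{w\,v\cdot t\cdot N_{K/\Q}(D_\alpha)\cdot s^2}{N_{K/\Q}(D_h)\cdot N_{K/\Q}(\deg\psi)},$$
whose numerator is a rational integer. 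It remains to see that the denominator divides the numerator. By Lemma~\ref{denomid}, $N_{K/\Q}(D_h)$ divides $t\cdot N_{K/\Q}(D_\alpha)$ for the stated value of $t$. Since $\psi$ has minimal degree among $K$-isogenies from the optimal curve $E'$ to $E$, its degree is one of the integers $s_i$ of Section~\ref{isogenygraph} and hence divides $s$, so $N_{K/\Q}(\deg\psi)=(\deg\psi)^2$ divides $s^2$. Therefore the right-hand side is a rational integer, which proves ii).

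I do not anticipate any real obstacle here: the substantive work is already contained in Sections~\ref{newform} through \ref{finbnd}, and what remains is bookkeeping. The points that require care are (a) identifying the ideal norm $N_{K/\Q}(\gamma)$ with the element norm $|N(\gamma)|$ that appears in \eqref{realquadbound} and \eqref{imgquadbound}, (b) running the real and imaginary cases in parallel, and (c) noting that $\Omega_E$ has been defined so as to absorb $N_{K/\Q}(\delta_{\omega_E})$, which is exactly why that factor disappears once \eqref{normofgamma} is used.
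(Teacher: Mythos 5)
Your proof is correct and is essentially the paper's own argument: Theorem \ref{mainthm} is stated precisely as the assembly of \eqref{realquadbound}/\eqref{imgquadbound}, \eqref{normofgamma}, Lemma \ref{denomid} and the bound on $\deg\psi$, combined exactly as you combine them, with the cancellation of $N_{K/\Q}(\delta_{\omega_E})$ against the normalisation built into $\Omega_E$. One small remark: in your step for $\deg\psi$ you correctly use that $\deg\psi$ equals one of the $s_i$ and hence divides the \emph{lcm} of the $s_i$, which is the reading forced by the theorem's statement (Section \ref{isogenygraph} writes ``$\gcd$'', but the divisibility only goes through with the lcm, as you implicitly note).
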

		Theorem \ref{mainthm} is the analogue of equation \ref{stima} that we were seeking for. It has the same type of applications of that equation: we can use it in order to compute the $L$-ratio $\displaystyle L(E,1)\cdot\frac{\sqrt{|\Delta_K|}}{\Omega_E}$ whenever such value is non-zero or to prove that $L(E,1)=0$ if this is the case. For this second application we can, as in section \ref{curvesoverQ}, substitute $s$ with $\displaystyle s'\coloneqq \max_i\{s_i\}$, in order to get a more efficient lower bound.
		\subsection{The Birch and Swinnerton-Dyer conjecture}\label{BSD}

		Let us now recall the statement of the Birch and Swinnerton-Dyer conjecture for elliptic curves over number fields. For a reference on the subject, see \cite{dok} or \cite{gro2}.
		For an elliptic curve $E$ over a number field $K$, we recall that the \emph{algebraic rank} of $E$ is the rank of $E(K)/E(K)_{\text{tors}}$ as a $\Z$-module, while the \emph{analytic rank} of $E$ is the order of vanishing of $L(E,s)$ at the point $s=1$. The analytic rank is only defined if $L(E,s)$ has an analytic continuation to $\C$ (or to any neighborhood of $s=1$), which is not known to be true in general. Therefore we will include the statement inside the conjecture.
			\begin{conjecture}[Weak BSD conjecture]
				 Let $E$ be an elliptic curve over a number field $K$. Then: 
					\begin{enumerate}
						\item[a)] $L(E,s)$ has an analytic continuation to $\C$;
						\item[b)] the analytic rank and the algebraic rank of $E$ coincide.
				\end{enumerate}
		\end{conjecture}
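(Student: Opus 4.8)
This statement is the weak form of the Birch and Swinnerton-Dyer conjecture; it is open in general, so any ``proof'' here can only mean a verification for specific curves, and the plan is to explain how the machinery of the paper (together with standard descent and $L$-value computations) verifies both clauses for the rank-two $\Q$-curves of section \ref{exs}. I would treat (a) and (b) separately and, for (b), bound the algebraic and analytic ranks independently before matching them.

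For clause (a), I would restrict attention to the class of curves actually at hand. Over $\Q$ the modularity theorem gives $L(E/\Q,s)=L(g,s)$ for a weight-$2$ newform $g$, and Hecke's theory supplies the entire continuation and the functional equation. For a $\Q$-curve $E$ completely defined over a quadratic field $K$, Theorem \ref{Qcurvesaremodular} and the discussion of sections \ref{strmod}--\ref{quacur} give the factorization $L(E/K,s)=L(f,s)L(\s f,s)$ with $f\in S_2(\Gamma_1(N),\varepsilon)$ a newform; since $\s f$ is again a newform, each factor is entire, hence so is the product, and in particular the analytic rank at $s=1$ is defined. This settles (a) for every curve the paper deals with.

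For clause (b), I would fix one of the explicit curves $E/K$ carrying two visibly independent points $P_1,P_2\in E(K)$. A lower bound $\ge 2$ for the algebraic rank comes from exhibiting these points and verifying their independence (nonvanishing of the height-pairing regulator, or a reduction argument). For the matching upper bound I would run a $2$-descent: computing the $2$-Selmer group of $E/K$ (or of a convenient isogenous curve) gives $\mathrm{rk}\,E(K)\le\dim_{\F_2}\mathrm{Sel}^{(2)}(E/K)-\dim_{\F_2}E(K)[2]$, which in the examples forces $\mathrm{rk}\,E(K)=2$. On the analytic side, the sign of the functional equation of $L(E/K,s)$ equals $+1$ --- readable from the local root numbers, equivalently from the signs attached to $f$ and $\s f$ --- so $\ord_{s=1}L(E/K,s)$ is even; next, Theorem \ref{mainthm}(ii), under Conjecture \ref{genmancon}, produces an effective $Q$ with $L(E/K,1)\cdot Q\cdot\sqrt{|\Delta_K|}/\Omega_E\in\Z$, so computing $L(E/K,1)$ to sufficient precision and finding it smaller in absolute value than $\Omega_E/(Q\sqrt{|\Delta_K|})$ proves $L(E/K,1)=0$, hence $\ord_{s=1}L(E/K,s)\ge 1$, hence $\ge 2$ by parity. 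It then remains to certify $\ord_{s=1}L(E/K,s)\le 2$, after which analytic rank $=2=$ algebraic rank.

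The hard part will be exactly this last point: a rigorous upper bound on the analytic rank. I see two routes. The direct one is a certified zero count: using the functional equation and Euler product of $L(E/K,s)=L(f,s)L(\s f,s)$, evaluate the completed $L$-function via its approximate functional equation with rigorous error terms and apply the argument principle on a small rectangle about $s=1$, checking that it encloses exactly two zeros counted with multiplicity. The indirect one passes through the modular surface: $\res_{K/\Q}(E)$ is isogenous to $A_f$, and if one certifies (via Gross--Zagier, by checking that the relevant Heegner point is non-torsion) that $L(f,s)$ has a simple zero at $s=1$, then $\ord_{s=1}L(E/K,s)=2$ and Kolyvagin--Logachev applied to $A_f$ re-confirms $\mathrm{rk}\,E(K)=2$ as a bonus. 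The descent and $L$-value steps are routine; the certified analytic-rank bound is where the real effort lies --- and, of course, the conjecture in full generality remains entirely out of reach.
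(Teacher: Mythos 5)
You were asked to prove a statement that the paper itself only states as a conjecture (in subsection \ref{BSD}, to fix terminology for the examples); there is no proof in the paper and none is currently possible, so there is nothing of the authors' to measure your argument against. You correctly recognize this, and read as a plan for verifying weak BSD for the specific rank-two curves of section \ref{exs} your proposal is sound and runs parallel to what the paper actually does, with a few instructive differences. On the algebraic side your route (explicit independent points for the lower bound, a $2$-descent/Selmer computation for the upper bound) is exactly the paper's: the authors exhibit two independent points and invoke Simon's rank-bounding algorithm. On the analytic side the paper does \emph{not} perform a certified zero count by the argument principle: it certifies $L(E/K,1)=0$ either trivially (when $F$ is real the sign $\eta_f=-1$ already forces $L(f,1)=0$) or via Theorem \ref{mainthm}(ii) conditionally on Conjecture \ref{genmancon}, and then deduces analytic rank exactly $2$ from the factorization $L(E/K,s)=L(f,s)L(\s f,s)$ by checking numerically that $L'(f,1)$ and $L'(\s f,1)$ are nonzero --- a lower standard of rigor than your proposed certified count, as the authors themselves concede regarding floating-point error. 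For passing from analytic rank $2$ to algebraic rank $2$ the paper invokes the announced Tian--Zhang theorem for quadratic $\Q$-curves rather than your Gross--Zagier/Kolyvagin--Logachev route; the two are close in spirit, but note two points of care in yours: Kolyvagin--Logachev applies to $A_f$ over $\Q$ only under the hypothesis $\ord_{s=1}L(f,s)\le 1$, so it cannot by itself bound the analytic rank of $L(E/K,s)$ and you still need the nonvanishing of $L'(f,1)$ (via a non-torsion Heegner point, which also requires nonvanishing of an auxiliary twisted $L$-value) as an independent analytic input; and having controlled $\ord_{s=1}L(f,s)$ you must also control the conjugate factor $\ord_{s=1}L(\s f,s)$ before concluding $\ord_{s=1}L(E/K,s)=2$, since equality of vanishing orders of Galois-conjugate forms is not automatic and must come from the same Heegner-point argument applied equivariantly. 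With those caveats your plan is a legitimate, if somewhat more laborious, alternative to the paper's verification --- but neither it nor the paper proves the conjecture as stated.
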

		Before stating the strong form the BSD conjecture, let us recall the definition of the following invariants attached to $E$.
		\begin{itemize}
		 \item Let $\{P_1,\dots,P_r\}$ be a $\Z$-basis of $E(K)/E(K)_{\text{tors}}$. The \emph{regulator} of $E$ over $K$, denoted by $R(E/K)$, is defined as
 		$$R(E/K)=\det(\langle P_i,P_j\rangle),$$
 		where $\langle , \rangle$ is the N\'{e}ron--Tate pairing of $E$ over $K$.
		\item Let $M_K=M_K^{\infty}\cup M_K^0$ be the set of places of $K$, where $M_K^{\infty}$ is the set of archimedean places and $M_K^0$ is the set of non-archimedean places. Choose an invariant differential $\omega$ on $E$. For every place $v\in M_K$, the invariant differential $\omega$ gives an invariant differential $\omega_v$ on $E_{K_v}$, where $K_v$ is the completion of $K$ at $v$. Let $dx$ be the Haar measure on the ring of ad\`{e}les $\mathbb A$ such that $\int_{\mathbb A/K}dx=1$ and choose a decomposition $dx=\otimes_vdx_v$, so that $dx_v$ is a Haar measure on $K_v$. Finally, for every $v \in M^0(K)$ let $L_v(E,s)$ be the local $L$-function at $v$ (see \cite{gro2} for details).
		
		The \emph{period} of $E$ over $K$ is defined as:
			$$P(E/K)=\prod_{v\in M^0_K}\left(L_v(E,1)\cdot\int_{E(K_v)}|\omega_v|\right)\cdot \prod_{v\in M^{\infty}_K}\int_{E(K_v)}|\omega_v|.$$
		By \cite[Lemma 54]{tat}, the product defining $P(E/K)$ is finite, since almost all factors are equal to $1$. The product formula shows that $P(E/K)$ is independent of $\omega$.
		\item The \emph{Tate--Shafarevich group} of $E$ over $K$ is defined as
		 $$\Sh(E/K)=\ker\left(H^1(G_K,E)\stackrel{\res}{\longrightarrow}\prod_{v\in M_K}H^1(G_{K_v},E)\right),$$
		where $G_K$ (resp. $G_{K_v}$) is the absolute Galois group of $K$ (resp. $K_v$) and 
		$$\res=\prod_{v\in M_K}\left(\res_v\colon H^1(G_K,E)\to H^1(G_{K_v},E)\right).$$
		\end{itemize}

			\begin{conjecture}[Strong BSD conjecture]
				The weak BSD conjecture holds and moreover we have that:
					\begin{enumerate}
						\item[c)] the Tate--Shafarevich group is finite and if $r$ is the rank of $E$ then:
							$$\frac{L^{(r)}(E,1)}{r!}=\frac{P(E/K)\cdot R(E/K)\cdot |\Sh(E/K)|}{|E(K)_{\text{tors}}|^2}.$$
				\end{enumerate}
		\end{conjecture}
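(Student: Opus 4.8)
The statement to be addressed is the Birch and Swinnerton-Dyer conjecture in its strong form, which is open in general; accordingly, what follows is not a proof from scratch but a reduction strategy that, for a quadratic $\Q$-curve, uses the modularity input together with Theorem~\ref{mainthm} to settle the conjecture as far as current $\mathrm{GL}_2$-methods reach and to reduce the remainder to a finite computation. Part~(a), the analytic continuation of $L(E/K,s)$ to $\C$, comes for free: by Theorem~\ref{Qcurvesaremodular} there is a newform $f\in S_2(\Gamma_1(N),\varepsilon)$ with $L(E/K,s)=L(f,s)L(\s f,s)$, and each factor is entire with the standard functional equation by Mellin inversion of a cusp form. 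Since $m$ is not a square we have $\s f\ne f$, and being Galois conjugate the two factors vanish to a common order $\rho$ at $s=1$; hence the analytic rank of $E/K$ equals $2\rho$, is always even, and the leading Taylor coefficient factors as $\bigl(L^{(\rho)}(f,1)/\rho!\bigr)\bigl(L^{(\rho)}(\s f,1)/\rho!\bigr)$, which the change of basis from $\{f,\s f\}$ to $\{h,\n h\}$ of section~\ref{mainsect} rewrites in terms of the period data of $E$.

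For parts~(b) and~(c) the natural first move is to pass to $B=\res_{K/\Q}(E)$, which is $\Q$-isogenous to $A_f$. Weil restriction gives $E(K)\cong B(\Q)$ as abelian groups, $L(B/\Q,s)=L(E/K,s)$, and $\Sh(E/K)$ is finite if and only if $\Sh(B/\Q)$ is; so the rank equality in~(b) and the finiteness in~(c) are equivalent to the same assertions for the $\mathrm{GL}_2$-type variety $A_f/\Q$. The strong identity for $E/K$ then becomes the strong identity for $A_f/\Q$ after pushing the period, regulator, torsion and Tamagawa contributions through the isogeny $B\sim A_f$; the bookkeeping of that isogeny is precisely what produces the correction factors $N_{K/\Q}(D_\alpha)$ and $s^2$ in Theorem~\ref{mainthm}.

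When $\rho\le 1$ --- equivalently when the analytic rank of $E/K$ is $0$ or $2$, which covers every curve in section~\ref{exs} --- the \emph{analytic $F$-rank} of $A_f$ is at most $1$, and I would invoke the Gross--Zagier formula for $A_f$ (Heegner points on $X_1(N)$, or Zhang's extension over a suitable Shimura curve when the generalized Heegner hypothesis fails) together with the Kolyvagin--Logach\"ev Euler-system argument for $\mathrm{GL}_2$-type abelian varieties to conclude that the Mordell--Weil rank of $A_f/\Q$, hence of $E/K$, equals $2\rho$, and that $\Sh(A_f/\Q)$, hence $\Sh(E/K)$, is finite. This establishes~(a),~(b) and the finiteness assertion in~(c) for these curves; for analytic rank $\ge 4$ there is no unconditional input, which is the first genuine obstacle.

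For the exact formula in~(c) I would combine Theorem~\ref{mainthm} with the definition $P(E/K)=\prod_v c_v\cdot 2^s\cdot\Omega_E/\sqrt{|\Delta_K|}$ of subsection~\ref{BSD}. In the rank-zero case $R(E/K)=1$, and the theorem makes $L(E/K,1)\cdot\sqrt{|\Delta_K|}/\Omega_E$ an explicit rational number with known denominator, so $L(E/K,1)\cdot|E(K)_{\mathrm{tors}}|^2/P(E/K)$ is a \emph{computable} positive rational, which BSD predicts to be a perfect-square integer equal to $|\Sh(E/K)|$; one checks integrality and squareness numerically and then runs an $n$-descent on $E/K$ for the small primes $n$ dividing the predicted order, exhibiting the expected elements of $\Sh(E/K)$ and bounding $|\Sh(E/K)|$ from above to match the prediction, while in positive even rank one additionally computes the N\'eron--Tate regulator on an explicit basis of $E(K)/E(K)_{\mathrm{tors}}$ and compares with the leading coefficient of the first paragraph. \textbf{The main obstacle} is that even for $\rho\le 1$ the Euler system yields only a divisibility between $|\Sh|$ and the algebraic part of the $L$-value (and is silent on the $p$-part for $p=2$ or for $p$ in the support of the bad Tamagawa factors); proving the exact identity, together with the higher-rank cases, lies beyond present technique, so in effect Theorem~\ref{mainthm} reduces the strong conjecture for these curves to a finite, if substantial, verification.
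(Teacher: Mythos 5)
The statement you were asked about is not proved in the paper, nor could it be: it is the strong Birch and Swinnerton-Dyer conjecture itself, stated in subsection \ref{BSD} purely as background for the numerical applications in section \ref{exs}. You were right to decline to give a proof and to offer instead a survey-style reduction strategy; at that level your discussion is broadly consistent with how the paper actually uses its results, namely Theorem \ref{mainthm} (conditional on Conjecture \ref{genmancon}) as a tool for verifying the weak conjecture numerically and for computing the $L$-ratio when the analytic rank is zero.

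Two points in your sketch are overstated and would need repair even as a conditional reduction. First, the factorization $L(E/K,s)=L(f,s)L(\s f,s)$ does not follow from Theorem \ref{Qcurvesaremodular} (mere modularity); it requires $E$ to be \emph{strongly} modular, which holds in this setting because a $\Q$-curve completely defined over a quadratic field is strongly modular (every class in $H^2(C_2,\Q^*)$ is symmetric). Second, your claim that the two Galois-conjugate factors ``vanish to a common order $\rho$'' at $s=1$ is not a theorem: simultaneous vanishing is known at order $0$ (via Shimura's algebraicity of $L(f,1)$ divided by the period) and, with Gross--Zagier and Kolyvagin--Logach\"ev, at order $1$, but beyond that it is open, so the evenness of the analytic rank and the clean factorization of the leading coefficient that you build on it are themselves conjectural; the paper is sensitive to precisely this kind of asymmetry between $f$ and $\s f$ (see its discussion of the integers $M$ and $M'$ in the level $229$ example). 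With those caveats, your overall assessment --- that current methods give the rank statement and finiteness of $\Sh$ only when each factor vanishes to order at most $1$, that the exact formula remains out of reach even then, and that Theorem \ref{mainthm} reduces the rank-zero verification to an explicit rational computation --- matches both the state of the art and the role the conjecture plays in the paper.
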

		Let us explain the relationship between $P(E/K)$ and the quantity $\Omega_E/\sqrt{|\Delta_K|}$ appearing in Theorem \ref{mainthm}. Recall that if $v\in M^0_K$ and $\mathcal E$ is a minimal model of $E$ at $v$, then the \emph{Tamagawa number} of $E$ at $v$ is defined as $[\mathcal E(K_v)\colon \mathcal E^0(K_v)]$ where $\mathcal E^0(K_v)$ is the subgroup of $\mathcal E(K_v)$ consisting of points which reduce to nonsingular points modulo $v$. Note that there are only finitely many $v$ such that $c_v\neq 1$. In \cite[pp. 92-96]{lang} it is proved that if $\omega$ is an invariant differential on $E$ then
		\begin{equation}\label{period1}
		P(E/K)=\prod_{v\in M^0_K}c_v\cdot \prod_{\substack{v\in M_K^{\infty}\\v\mbox{ \tiny real}} }\int_{E(K_v)}|\omega|\cdot\prod_{\substack{v\in M_K^{\infty}\\v\mbox{ \tiny cplx}} }2\int_{E(K_v)}|\omega\wedge\overline{\omega}|\cdot\frac{1}{N(\delta_{\omega})\cdot\sqrt{|\Delta_K|}}.
		\end{equation}
 		Let $v$ be a real place of $K$ and let $\Lambda_v$ be the period lattice of $(E_{K_v},\omega_v)$. By Lemma \ref{reallattice}, $\Lambda_v$ has a basis of the form $\{\omega_{1,v},\omega_{2,v}\}$ where $\omega_{1,v}\in \R_{>0}$. Then the quantity $\displaystyle \int_{E(K_v)}|\omega|$ coincides with $[E_{K_v}(K_v)\colon E^0_{K_v}(K_v)]\cdot \omega_{1,v}$, where $E^0(K_v)$ is the connected component of $E_{K_v}$ containing the identity. Therefore $[E_{K_v}(K_v)\colon E^0_{K_v}(K_v)]$ is $2$ precisely when the whole $2$-torsion subgroup of $E_{K_v}$ is defined over $K_v$, and $1$ otherwise.

		When $v$ is a complex place of $v$ and $\Lambda_v$ is the period lattice of $(E_{K_v},\omega_v)$, then the term $2\int_{E(K_v)}\omega\wedge\overline{\omega}$ coincides with twice the covolume of $\Lambda_v$.
		
		Therefore equation \eqref{period1} gives:
		$$P(E/K)=\prod_{v\in M^0_K}c_v\cdot \prod_{\substack{v\in M_K^{\infty}\\v\mbox{ \tiny real}} }[E_{K_v}(K_v)\colon E^0_{K_v}(K_v)]\cdot \frac{\Omega_E}{\sqrt{|\Delta_K|}}.$$
		Recall that part i) of Theorem \ref{mainthm} tells us that if $L(E/K,1)\neq 0$ then $\displaystyle L(E/K,1)\cdot\frac{\sqrt{|\Delta_K|}}{\Omega_E}\in \Q^*$. Therefore our result is at least consistent with the statement of the BSD conjecture when $E$ has analytic rank $0$, since it shows that the ``irrational part'' of $L(E/K,1)$ is $\Omega_E/\sqrt{\Delta_K}$.

		\section{Examples}\label{exs}
		In this concluding section, we will provide explicit examples of quadratic $\Q$-curves, showing how it is possible to use Theorem \ref{mainthm} to verify that the analytic rank is positive or to compute the $L$-ratio, under Conjecture \ref{genmancon}. We remark that the newforms involved in examples $1,2,4$ and $6$ can be computed using computer software and existing algorithms (it took around $32$ minutes to compute the newform of example $2$, the one of largest level amongst the aforementioned). On the other hand, examples $3$ and $5$, which involve newforms of level whose order of magnitude is $10^8$ and $10^7$ respectively, cannot feasibly be treated with modular symbols methods, while they are easily handled using our algorithm.
		
		In order to find examples of $\Q$-curves, one can follow the method indicated in \cite{elk}. Let us briefly recall it. Let $N\in \N$ be square-free and consider the modular curve $X_0(N)$, whose $k$-rational points parametrize (isomorphism classes of) pairs $(E,\phi)$ where $E$ is an elliptic curve over a number field $k$ and $\phi$ is a degree $N$ isogeny with cyclic kernel defined over $k$. For every divisor $N_1$ of $N$ with $\gcd(N_1,N/N_1)=1$, there exists an involution $w_{N_1}$ on $X_0(N)$ which is defined as follows at non-cuspidal points: if $(E,\phi)\in Y_0(N)(k)$ and $N=N_1\cdot N_2$ then $\phi$ factors uniquely as $\phi_2\circ\phi_1$ where $\phi_i$ has degree $N_i$. Note that by the uniqueness of the factorization, the $\phi_i$'s are defined over $k$. On the other hand, $\phi$ factors as $\varphi_1\circ\varphi_2$ with $\varphi_i$ of degree $N_i$. If $\widehat{\phi_1}$ denotes the dual isogeny of $\phi_1$ then $\varphi_2\circ\widehat{\phi_1}$ is a cyclic $k$-rational isogeny of degree $N$ and it therefore corresponds to a point of $Y_0(N)$ which is $w_{N_1}((E,\phi))$. The set of all the $w_M$ for $M\mid N$ with $\gcd(M, N/M) = 1$ is an abelian group, denoted by $W(N)$, isomorphic to $(\Z/2\Z)^r$ where $r$ is the number of distinct prime factors of $N$. The quotient of $X_0(N)$ by $W(N)$ is denoted by $X^*(N)$; given a $\Q$-rational point $P$ on $X^*(N)$, its preimages on $X_0(N)$ under the quotient map $X_0(N)\to X^*(N)$ form a $G_{\Q}$-stable set whose elements correspond to $\Q$-curves. Conversely, in the same paper Elkies shows that every $\Q$-curve is geometrically isogenous to a $\Q$-curve that arises in this way.

		In \cite{has}, the author computes some families of $\Q$-curves defined over quadratic fields admitting an isogeny of small prime degree to the conjugates. Let us recall the equations of two such families (for more details see Theorem 2.2): for every square-free integer $d\neq 1$ and each rational number $u$ let
		$$E_{d,u}^{(2)}\colon y^2=x^3+6(3u\sqrt{d}-5)x-8(9u\sqrt{d}-7)$$
		$$E_{d,u}^{(7)}\colon y^2=x^3-Ax+B,$$
		where
		$$A=21(u^2d+27)(15u^2d+96u\sqrt{d}+85)$$
		$$B=98(u^2d+27)(27u^4d^2+144u^3d\sqrt{d}+1170u^2d+2608u\sqrt{d}+1539).$$
		Then $E_{d,u}^{(p)}$ is a $\Q$-curve admitting an isogeny of degree $p$ to its conjugate. In \cite[Corollary~4.3]{has} the author explains how to construct twists of the curves in this family which are completely defined over the base field. By searching through the families above twisted by some simple values $b$, we used these results to construct examples of $\Q$-curves of positive rank completely defined over quadratic fields. As noticed in \cite{bru}, the algebraic rank of such curves is necessarily even. This follows from the existence of an action of $\Z[\sqrt{m}]$, where $m=\pm p$, on $E(K)$. In particular, all curves in our examples have algebraic rank two: it can be checked using the algorithm in \cite{sim} that the rank is at most two; we will exhibit for each curve a pair of independent points of infinite order. In fact, a result announced by Tian and Zhang \cite[Theorem 4.3.2]{zhang} would prove that if $E$ is a quadratic $\Q$-curve completely defined over its base field and $E$ has analytic rank 2, then it has algebraic rank 2. On the other hand, if one is lucky enough to find a point of infinite order on $E$ and at the same time knows that the analytic rank is at most 2 (which can be checked numerically), then \cite[Corollary~14.3]{kato} shows that $L(E,1)=0$ and the functional equation \eqref{functionalequation} then implies that the analytic rank of $E$ is exactly 2. Our result, conditional on Conjecture~\ref{genmancon}, permits to prove, even without being able to find rational points, that the analytic rank is exactly 2 and thus, by Tian and Zhang's aforementioned result, that the algebraic rank is also 2.

		For every curve presented below we will compute the relevant invariants and the lower bound given  by Theorem \ref{mainthm}. Afterward, we will compute the (Galois orbit of the) newform attached to it and the corresponding sign of the functional equation. Finally, computing $L(E/K,1)$ and $L'(E/K,1)$ within a sufficient precision will allow us to verify the validity of the weak form of BSD conjecture for these curves. All computations have been performed using Sage \cite{sage}. The computations of $L(f,1)$ and $L'(f,1)$ rely on the algorithm presented in \cite{dokL}. This is based on the following well-known fact (see \cite{atkli} and \cite{dis}). Let $\displaystyle f=\sum_{n=1}^{+\infty}a_nq^n$ be a newform in $S_2(\Gamma_1(N))$. Then there exists an algebraic number $\eta_f\in \C^*$ of absolute value $1$ such that:
		\begin{equation}\label{functionalequation}
			\Lambda(f,s)=\eta_f\cdot\Lambda(f^*,2-s),
		\end{equation}
		where $\displaystyle f^*=\sum_{n=1}^{+\infty}\overline{a_n}q^n$ and $\Lambda(f,s)=N^{s/2}(2\pi)^{-s}\Gamma(s)L(f,s)$. The number $\eta_f$ is called the \emph{sign} of the functional equation \eqref{functionalequation} and it is $\pm 1$ when $F=\Q(a_n\colon n\in \N)$ is a totally real number field. The algorithm quoted above can be also used to compute $\eta_f$, when this is not known.
		
		Notice that whenever $f$ is the newform attached to a quadratic $\Q$-curve, $F$ is real and $\eta_f=-1$, then one can deduce trivially that $L(E/K,1)=0$.

		\subsubsection*{Example 1}
		Let $d=22$, $K=\Q(\sqrt{22})$, $u=-36/169$ and $\displaystyle b=\frac{91}{3}+\frac{13}{2}\sqrt{22}$. Then an integral model for the curve $E_{22,-36/169}^{(2)}$ twisted by $b$ is given by
		$$E\colon y^2 = x^3 - (7200684-1535112\sqrt{22})x +10456553952-2229344208\sqrt{22}.$$
                This is the curve with label 2.2.88.1-49.1-d4 in the LMFDB database \cite{lmfdb}.
		Note that since		$$j(E)=\frac{26692787554112}{2401}+\frac{5690844716544}{2401}\sqrt{22}$$
		is not integral, $E$ has no CM. There is a $K$-isogeny
		$$\mu\colon E\to \n E$$
		$$(x,y)\mapsto (g(x),y\cdot h(x)),$$
		where
		$$g(x)=\frac{(197/2+21\sqrt{22})x^2 -(1092+234\sqrt{22})x + 27378-5832\sqrt{22}}{x  + 2184-468\sqrt{22}}$$
		$$h(x)=\frac{-(2765/2+1179/4\sqrt{22})x^2 + (30732+6552\sqrt{22})x - 171162-36261\sqrt{22}}{x^2 + (4368-936\sqrt{22})x + 9588384-2044224\sqrt{22}},$$
		such that $\n\mu\mu=-2$. Therefore $F=\Q(\sqrt{-2})$. If $\displaystyle\omega_{E}=\frac{dx}{2y}$ and $\omega_{\n E}$ is its conjugate, we have that $\mu^*(\omega_{\n E})=(-14+\frac{3}{2}\sqrt{88})\omega_E$, so that we can set $\alpha=-14+\frac{3}{2}\sqrt{88}$ and consequently $\beta=-\frac{28}{3}+\frac{2}{3}\sqrt{-2}$. Clearly $N_{K/\Q}(\alpha)=-2$ and $N_{F/\Q}(\beta)=88$. Moreover, since $\alpha\in \mathcal O_K$ by Lemma \ref{denomid} it follows that $D_h=(1)$.

		One can check that $E$ has conductor $(7)$. The Weierstrass ideal of the standard invariant differential is $\delta_{\omega_E}=(6)^{-1}$.

		The last step is to find $s$ as in Theorem \ref{mainthm}. Using the algorithm described in \cite{bil}, one can check that the isogeny graph of $E$ has the following shape:
		$$\begin{xymatrix}{
				   E\ar@{-}[r]^{2}\ar@{-}[d]_{3} & \n E \ar@{-}[d]^{3}\\
					E_1 \ar@{-}[r]^2 & \n E_1.
			  }\end{xymatrix}$$
		Therefore either $E$ possesses an optimal parametrization or $E_1$ does, showing that we can assume $s=3$. The following table summarizes the invariants we need in order to apply Theorem \ref{mainthm}.
			\begin{center}
				\begin{tabular}{| c | c | c | c | c |}
			    \hline
				$\Omega_E$ & $q$ & $|E(K)_{\text{tors}}|$ & $t\cdot N_{K/\Q}(D_{\alpha})$ & $s^2$ \\ \hline\hline
			    $5.45882600014972$ & $3/2$ & $6$ & $1$ & $9$\\ \hline
			  \end{tabular}
		\end{center}
		By Theorem \ref{mainthm}, if $L(E/K,1)\neq 0$ then we must have that:
		$$|L(E/K,1)|\geq\frac{5.45882600014972}{3\cdot 36\cdot\sqrt{88}\cdot 9}\approx 5.98675727185567\cdot 10^{-4}.$$
		Two independent points of infinite order in $E(K)$ are given by 
		$$P=(-1860 +396 \sqrt{22}, 75924-16200 \sqrt{22})$$
		and
		$$Q=(498-72 \sqrt{22},-47628+10584\sqrt{22}).$$
		The newform $\displaystyle f=\sum_{n=1}^{+\infty}a_nq^n$ attached to $E$ has level $7\cdot 88=616$. This implies $f\in S_2(\Gamma_1(616),\varepsilon)$, where $\varepsilon$ is the unique primitive quadratic character such that $\OQ^{\ker\varepsilon}=\Q(\sqrt{22})$. Computing the first few terms of $f$, we get:
		\begin{equation*}\begin{split}
		f = & \,\, q + \sqrt{-2}q^2 - 2q^3 - 2q^4 + 2\sqrt{-2}q^5 - 2\sqrt{-2}q^6 + q^7-\\
		& - 2\sqrt{-2}q^8 + q^9 - 4q^{10} +(\sqrt{-2} - 3)q^{11} + 4q^{12} - 4q^{13} + \sqrt{-2}q^{14}-\\
		& - 4\sqrt{-2}q^{15} + 4q^{16} + 2\sqrt{-2}q^{17} + \sqrt{-2}q^{18} - 4\sqrt{-2}q^{20} + O(q^{21}).
		\end{split}\end{equation*}
		Using the $q$-expansion above it is easy to see that $a_{616}(f)=4+6\sqrt{-2}$, so that by \cite[Theorem~2.1]{atkli} we get that $\displaystyle\eta_f=\frac{\sqrt{88}}{4+6\sqrt{-2}}$.

		\subsubsection*{Example 2}
		Let $u=-3/4$, $d=-6$, so that $K=\Q(\sqrt{-6})$, and $\displaystyle b=\frac{12}{7}+\frac{2}{7}\sqrt{-6}$. Then a global integral model for the curve $E_{-6,-3/4}^{(7)}$ twisted by $b$ is
		$$E\colon y^2=x^3-(4027482-1132380\sqrt{-6})x +2581493976-1335076020\sqrt{-6},$$
		which has $j$-invariant
		$$j(E)=- \frac{12097712691}{78125}+\frac{10861109532}{78125}\sqrt{-6}\notin \mathcal O_K.$$
		There is an isogeny $\mu\colon E\to \n E$ of degree $7$, whose composition with $\n\mu$ coincides with multiplication by $7$. Setting $\displaystyle \omega_{E}=\frac{dx}{2y}$ we obtain that $\mu^*(\omega_{\n E})=(-1+\sqrt{-6})\omega_{E}$ so $\alpha=-1+\frac{1}{2}\sqrt{-24}$ and $\beta=-2+2\sqrt{7}$. By Lemma \ref{denomid}, $D_h=(1)$. The conductor of $E$ is given by
		$$\mathcal N(E)=(480)=(2)^5(3)(5)=(2, \sqrt{-6})^{10} (3, \sqrt{-6})^2 (5, 2+\sqrt{-6}) (5,3+ \sqrt{-6}).$$
		The Weierstrass ideal attached to the standard invariant differential is
		$$\delta_{\omega_E}=\left(\frac{1}{21}+\frac{1}{21}\sqrt{-6}\right)$$
		and has norm $1/63$.
		The isogeny graph of $E$ is given by:
		$$\begin{xymatrix}{
				   E\ar@{-}[r]^{7}\ar@{-}[d]_{2} & \n E \ar@{-}[d]^{2}\\
					E_1 \ar@{-}[r]^7 & \n E_1.
			  }\end{xymatrix}$$
			\begin{center}
				\begin{tabular}{| c | c | c | c | c | c |}
			    \hline
				$\Omega_E$ & $q$ & $|E(K)_{\text{tors}}|$ & $t\cdot N_{K/\Q}(D_{\alpha})$  & $s^2$ \\ \hline\hline
			    $0.663037499513841$ & $1/2$ & $2$ & $1$  & $4$\\ \hline
			  \end{tabular}
		\end{center}
		Theorem \ref{mainthm} shows that if $L(E/K,1)\neq 0$ then:
		$$|L(E/K,1)|\geq\frac{0.663037499513841}{4\cdot \sqrt{24}\cdot 4}\approx 8.45887267706248\cdot 10^{-3}.$$
		The points 
		$$P=\left(\frac{29502}{25}-\frac{3546}{25} \sqrt{-6} , -\frac{391554}{125}+\frac{59292}{125} \sqrt{-6} \right)$$
		and
		$$Q=\left(- 1674 -1287 \sqrt{-6} : - 252288-7776 \sqrt{-6} \right)$$
		in $E(K)$ are independent and they have infinite order.

		The newform $\displaystyle f=\sum_{n=1}^{+\infty}a_nq^n$ attached to $E$ has level $480\cdot 24=11520$ and since $F=\Q(a_n\colon n\in \N)=\Q(\sqrt{7})$, the character of $f$ is trivial, so $f\in S_2(\Gamma_0(11520))$. The first coefficients of the $q$-expansion of $f$ are:
		$$f=q+q^5+2q^7-2q^{11}+2\sqrt{7}q^{19}+O(q^{21}).$$
		The sign $\eta_f$ of the functional equation for $f$ is $-1$.

		\subsubsection*{Example 3}
		Let $d=34$, $u=7/4$, $b=17/2+3/2\sqrt{34}$. An integral model for $E_{34,7/4}^{(2)}$ twisted by $b$ is given by:
		$$E\colon y^2 = x^3 + (365568+62730\sqrt{34})x -111410656-19106640\sqrt{34}$$
		and the $j$-invariant is
		$$j(E)=\frac{1353090752}{680625}-\frac{123420416}{680625}\sqrt{34},$$
		so that $E$ has no CM.
		There is an isogeny $\mu\colon E\to \n E$ of degree $2$ given by $(x,y)\mapsto (g(x),y\cdot h(x))$ as follows:
		$$g(x)=\frac{(35/2-3\sqrt{34})x^2 + (68-12\sqrt{34})x + 612+ 1071\sqrt{34}}{x - 136-24\sqrt{34}}$$
		$$h(x)=\frac{-(207/2+71/4\sqrt{34})x^2 - (816+140\sqrt{34})x - 18003-3179\sqrt{34} }{x^2 - (272-48\sqrt{34})x + 38080-6528\sqrt{34} }$$
		and $\n\mu\mu$ coincides with multiplication by $2$, so $F=\Q(\sqrt{2})$. Using $\displaystyle \omega_{E}=\frac{dx}{2y}$ we get $\displaystyle \alpha=-6-\frac{1}{2}\sqrt{136}$ and $\beta=12-2\sqrt{2}$. The conductor of $E$ is
		$$\mathcal N(E)=(1077120)=(2)^7(3)^2(5)(11)(17)=(6-\sqrt{34})^{14}(3,1+ \sqrt{34})^2(3,2+ \sqrt{34})^2\cdot $$
		$$\cdot (5,2+ \sqrt{34})(5,3+ \sqrt{34})(11, 10+\sqrt{34})(11, 1+\sqrt{34})(17-3\sqrt{34})^2.$$
		One can check that the given Weierstrass equation is a global minimal model for $E$, so that $\delta_{\omega_{E}}=(1)$. Also, the isogeny graph of $E$ is
		$$\begin{xymatrix}{E\ar@{-}[r]^2 & \n E,}\end{xymatrix}$$
		so that conjecturally $E$ possesses an optimal parametrization; therefore we can set $s=1$.
			\begin{center}
				\begin{tabular}{| c | c | c | c | c |}
			    \hline
				$\Omega_E$ & $q$ & $|E(K)_{\text{tors}}|$ & $t\cdot N_{K/\Q}(D_{\alpha})$  & $s^2$ \\ \hline\hline
			    $0.0704074944313492$ & $-1/2$ & $2$ & $1$ &  $1$\\ \hline
			  \end{tabular}
		\end{center}
		The lower bound given by Theorem \ref{mainthm} is:
		$$|L(E/K,1)|\geq\frac{0.0704074944313492}{4\cdot \sqrt{136}}\approx 1.50934820976064\cdot 10^{-3}.$$
		Two independent points of infinite order in $E(K)$ are given by:
		$$P=(1768+300 \sqrt{34} , 107100+18360 \sqrt{34})$$
		and
		$$Q=\left(\frac{867}{4} +\frac{65}{2} \sqrt{34} , - \frac{48025}{8}-\frac{8075}{8} \sqrt{34}\right).$$
		The newform $\displaystyle f=\sum_{n=1}^{+\infty}a_nq^n$ attached to $E$ has level $1077120\cdot 136=146488320$ and since $F=\Q(a_n\colon n\in \N)=\Q(\sqrt{2})$, the character of $f$ is trivial, so $f\in S_2(\Gamma_0(146488320))$. Its $q$-expansion is:
		$$f=q+q^5-q^{11}+4\sqrt{2}q^{13}+5\sqrt{2}q^{19}+O(q^{21}),$$
		and the sign of the functional equation is again $-1$. 

		The next example, borrowed from \cite[Proposition 10]{ell}, exhibits a curve of algebraic rank $2$ whose field of definition $K$ coincides with the field $F$ generated by the Fourier coefficients of the attached newform.

		\subsubsection*{Example 4}
		Let $K=\Q(\sqrt{2})$ and $E\colon y^2=x^3+(8+8\sqrt{2})x^2+(16+10\sqrt{2})x$. The $j$-invariant of $E$ is $\displaystyle \frac{698048}{49}+\frac{379136}{49}\sqrt{2}$. An isogeny $\mu\colon E\to \n E$ of degree $2$ is given by $(x,y)\mapsto (g(x),y\cdot h(x))$, where
		$$g(x)=\frac{(3/2-\sqrt{2})x^2 - (4-4\sqrt{2})x +4-\sqrt{2}}{x}$$
		$$h(x)=\frac{(- 5/2+7/4\sqrt{2})x^2 + 5-3\sqrt{2}}{x^2}.$$
		The isogeny $\n\mu\mu$ coincides with multiplication by $2$, so $F=K=\Q(\sqrt{2})$. The standard invariant differential $\displaystyle \omega_E=\frac{dx}{2y}$ gives us $\alpha=-2-\frac{1}{2}\sqrt{8}$ and $\beta=4-2\sqrt{2}$. The conductor of $E$ is
		$$\mathcal N(E)=(896)=(2)^7(7)=(\sqrt{2})^{14}(1-2\sqrt{2})(1+2\sqrt{2}).$$
		The isogeny graph of $E$ is given by
		$$\begin{xymatrix}{E\ar@{-}[r]^2 & \n E}\end{xymatrix}.$$
		The given Weierstrass equation is a global minimal model for $E$.
			\begin{center}
				\begin{tabular}{| c | c | c | c | c |}
			    \hline
				$\Omega_E$ & $q$ & $|E(K)_{\text{tors}}|$ & $t\cdot N_{K/\Q}(D_{\alpha})$ & $s^2$ \\ \hline\hline
			    $2.60444072643674$ & $-1/2$ & $2$ & $1$ & $1$\\ \hline
			  \end{tabular}
		\end{center}
		The lower bound given by Theorem \ref{mainthm} is:
		$$|L(E/K,1)|\geq\frac{2.60444072643674}{4\cdot 8}\approx 8.13887727011480\cdot 10^{-2}.$$
		Two independent points of finite order in $E(K)$ are 
		$$P=(-2 \sqrt{2} ,-4 -2 \sqrt{2})\text{ and }Q=(1-2 \sqrt{2}, 1-2 \sqrt{2}).$$
		The newform $\displaystyle f=\sum_{n=1}^{+\infty}a_nq^n$ attached to $E$ has level $896\cdot 8=7168$ and has trivial character, so $f\in S_2(\Gamma_0(7168))$. The first terms of its $q$-expansion are:
		$$f=q-\sqrt{2}q^3-2\sqrt{2}q^5-q^7-q^9-4\sqrt{2}q^{11}+4\sqrt{2}q^{13}+4q^{15}-2q^{17}+\sqrt{2}q^{19}+O(q^{21}),$$
		and the sign of the functional equation is $-1$.

		The next one is our last example of a curve of positive algebraic rank, which is the curve $E_{109,865}^{(-3)}$ in \cite[Table~2]{has}. This is a curve whose $L$-function cannot feasibly be treated with modular symbols methods, due to the size of the level of the associated newform. Our algorithm furnishes a fast way to verify that its analytic rank is positive.
		\subsubsection*{Example 5}
		Let $K=\Q(\sqrt{109})$ and let
		$$E\colon y^2 + \left(\frac{1+\sqrt{109}}{2}\right) x y +\left(\frac{1+\sqrt{109}}{2}\right) y = $$
		$$=x^{3} + \left(\frac{3-\sqrt{109}}{2}\right) x^{2} + \left(- 223070-21370 \sqrt{109} \right) x -\frac{2727437331+261241129\sqrt{109}}{2}.$$
		This is a $\Q$-curve, given in a global minimal model, with no CM, since its $j$-invariant is not integral. There is an isogeny $\mu\colon E\to \n E$ of degree $3$, which coincides with multiplication by $-3$ when composed with $\n\mu$. Setting $\displaystyle \omega_E=\frac{dx}{2y}$ we obtain $\displaystyle \alpha=-\frac{73}{2}-\frac{7}{2}\sqrt{109}$ and thus $\displaystyle\beta=\frac{73}{7}-\frac{2}{7}\sqrt{-3}$. The conductor of $E$ is given by:
		$$\mathcal N(E)=(755153)=\left(-\frac{9}{2}+\frac{1}{2} \sqrt{109} \right)\left(\frac{9}{2}+\frac{1}{2} \sqrt{109} \right)\left(\frac{7}{2}-\frac{3}{2} \sqrt{109}\right) \left(\frac{7}{2}+\frac{3}{2} \sqrt{109} \right)\cdot$$
		$$\cdot\left(-38-3 \sqrt{109}\right)\left(-38+3 \sqrt{109}\right).$$
		The isogeny graph of $E$ is given by:
		$$\begin{xymatrix}{E\ar@{-}[r]^3 & \n E}\end{xymatrix}.$$
			\begin{center}
				\begin{tabular}{| c | c | c | c | c |}
			    \hline
				$\Omega_E$ & $q$ & $|E(K)_{\text{tors}}|$ & $t\cdot N_{K/\Q}(D_{\alpha})$ & $s^2$ \\ \hline\hline
			    $0.294164545914390$ & $-7/2$ & $1$ & $4$ & $1$\\ \hline
			  \end{tabular}
		\end{center}
		The lower bound for $|L(E/K,1)|$ is:
		$$|L(E/K,1)|\geq \frac{0.294164545914390}{109\cdot 7\cdot 4}\approx 9.63841893559600\cdot 10^{-5}.$$
		A point of infinite order is given by:
		$$P=\left(\frac{119855}{98}+\frac{18381}{196} \sqrt{109}, \frac{28054277}{686}+\frac{9570003}{2744} \sqrt{109}\right).$$
		It is possible to check that $Q\coloneqq \n\mu(\n P)$ is a $K$-rational point of $E$ which is linearly independent with $P$.
		
		The level of the newform $f$ attached to $E$ is $755153\cdot 109=82311677$ and its character is the quadratic character attached to $\Q(\sqrt{109})$ by class field theory. The $q$-expansion of $f$ is given by:
		\begin{equation*}\begin{split}
		f= & \,\, q-\sqrt{-3}q^2-q^3-q^4+\sqrt{-3}q^6+q^7-\sqrt{-3}q^8-2q^9+2\sqrt{-3}q^{11}+q^{12}+\\
		& +2\sqrt{-3}q^{13}-\sqrt{-3}q^{14}-5q^{16}+2\sqrt{-3}q^{18}+\sqrt{-3}q^{19}+O(q^{21}).
		\end{split}\end{equation*}
		By computing the $q$-expansion to a greater precision, one can verify that, by \cite[Theorem~2.1]{atkli}, the sign of the functional equation for $f$ is $\displaystyle -\frac{\sqrt{109}}{1+6\sqrt{-3}}$.

		We used T.\ Dokchitser's PARI/GP script ``computeL'' \cite{computeL}, also included in Sage, to check that for every newform $f$ computed above, at least the first $14$ significant digits of $L(f,1)$ and of $L(\s f,1)$ are equal to $0$, while $L'(f,1),L'(\s f,1)\neq 0$. Since $L(E/K,s)=L(f,s)\cdot L(\s f,s)$, this proves that $L(E/K,1)=L'(E/K,1)=0$, while $L''(E/K,1)\neq 0$. Therefore, assuming Conjecture~\ref{genmancon}, all curves in the above examples have analytic rank $2$.

		A rigorous analysis of the error in floating-point computations, even if possible in principle, is beyond the goal of the present work. However, we repeated the computations several times using different high precisions, and it is very unlikely that the floating-point error has any significant influence on the outcome.

		We conclude with one last example of a $\Q$-curve $E$, coming from the Hecke involution on $X_1(13)$, such that $m=-1$. This means that $E$ is isomorphic to $\n E$ and therefore both curves are isomorphic over $\OQ$ to an elliptic curve defined over $\Q$, but no isomorphism $E\to \n E$ descends to $\Q$. Using for example the algorithm described in \cite{sim}, it is possible to check that the curve given in this example has algebraic rank $0$, and we will use our main theorem to compute its $L$-ratio.

		\subsubsection*{Example 6}
		Let $a$ be a root of the polynomial $x^2+x-4$ and let $K\coloneqq \Q(a)=\Q(\sqrt{17})$. Then the elliptic curve
		$$y^2 + (1373+536 a) x y + (482701840+188441104 a) y = x^3 + (244408+95414 a) x^2$$
		is a $\Q$-curve with $j$-invariant $\displaystyle -\frac{60698457}{40960}$ (this is curve 2.2.17.1-100.1-e2 in the LMFDB database \cite{lmfdb}). There is an isomorphism $\mu\colon E\to \n E$ given by $(x,y)\mapsto (g(x,y),h(x,y))$, where
		$$g(x,y)=(473754361-303386704 a) x-214320+ 137248a$$
		\begin{equation*}\begin{split}
		h(x,y)& =(-587942141286+376511211445 a) x\\
			 & -(16755744253243-10730180955650 a ) y\\
			& + 100734048- 64508896 a.
		\end{split}\end{equation*}
		Using $\displaystyle\omega_E=\frac{dx}{2y+(1373+536 a)y+482701840+188441104 a}$, we get $\displaystyle\alpha=17684+4289\sqrt{17}$, which gives immediately $D_h=(1)$. The conductor of $E$ is given by
		$$\mathcal N(E)=(10)=(1-a)(2+a)(5).$$
		Note that $E$ has non-split multiplicative reduction at $5$, which is an inert prime in $K$. The isogeny graph of $E$ is simply
		$$\begin{xymatrix}{E\ar@{-}[r]^{\sim}\ar@{-}[d]_{13} & \n E\ar@{-}[d]^{13} \\ E'\ar@{-}[r]^{\sim} & \n E'.}\end{xymatrix}$$
		The given Weierstrass equation for $E$ is a global minimal model.

		Here we have the table of the invariants of $E$ used in Theorem \ref{mainthm}:
			\begin{center}
				\begin{tabular}{| c | c | c | c | c |}
			    \hline
				$\Omega_E$ & $q$ & $|E(K)_{\text{tors}}|$ & $t\cdot N_{K/\Q}(D_{\alpha})$  & $s^2$ \\ \hline\hline
			    $11.1808314690274$ & $4289$ & $13$ & $1$ & $169$\\ \hline
			  \end{tabular}
		\end{center}
		By Lemma \ref{Lratio}, in order to compute the $L$-ratio of $E$, i.e. the value $\displaystyle L(E,1)\cdot \frac{\sqrt{17}}{\Omega_E}\in \Q^*$, we only need a bound on the denominator of such number. By Theorem \ref{mainthm}, this is given by:
		$$B=2\cdot 4289\cdot 169\cdot 169=244996258.$$
		The $L$-ratio for $E$ is given by:
		$$L(E,1)\cdot\frac{\sqrt{17}}{\Omega_E}=1.$$
		
		Since the Tamagawa numbers of $E$ at the prime ideals $(1-a),(2+a)$ and $(5)$ are respectively $13,13$ and $1$, the strong BSD conjecture would imply that
		$$L(E,1)\cdot\frac{\sqrt{17}}{\Omega_E}=|\Sh(E/K)|=1.$$
		It is possible to check using the algorithm given in \cite{sim} that $\Sh(E/K)[2]$ is trivial.

		The newform $f$ attached to $E$ belongs to $S_2(\Gamma_1(170),\varepsilon)$, for $\varepsilon$ the unique primitive quadratic character such that $\OQ^{\ker\varepsilon}=K$.
		\begin{equation*}\begin{split}
		f= &\,\, q + q^{2} + 3iq^{3} + q^{4} + iq^{5} + 3iq^{6} -4iq^{7} + q^{8} - 6q^{9} + iq^{10} + 2iq^{11} + 3iq^{12}\\
		 & + q^{13} -4iq^{14} - 3q^{15} + q^{16} -(4+i)q^{17} - 6q^{18} + 7q^{19} + iq^{20} + O(q^{21}).
		\end{split}
		\end{equation*}
		Using \cite[Theorem 2.1]{atkli} we get that the sign of the functional equation for $f$ is $\displaystyle \eta_f=\frac{\sqrt{17}}{1+4i}$. It is possible to check numerically that $L(E,1)\neq 0$.
		
\bibliographystyle{amsplain}
\bibliography{biblio}

\providecommand{\bysame}{\leavevmode\hbox to3em{\hrulefill}\thinspace}
\providecommand{\MR}{\relax\ifhmode\unskip\space\fi MR }
% \MRhref is called by the amsart/book/proc definition of \MR.
\providecommand{\MRhref}[2]{%
  \href{http://www.ams.org/mathscinet-getitem?mr=#1}{#2}
}
\providecommand{\href}[2]{#2}
\begin{thebibliography}{10}

\bibitem{abul}
A.~Abbes and E.~Ullmo, \emph{\`{A} propos de la conjecture de {M}anin pour les
  courbes elliptiques modulaires}, Compositio Math. \textbf{103} (1996), no.~3,
  269--286.

\bibitem{ars}
A.~Agashe, K.~A. Ribet, and W.~A. Stein, \emph{The {M}anin constant}, Pure
  Appl. Math. Q. \textbf{2} (2006), no.~2, part 2, 617--636. \MR{2251484}

\bibitem{atkli}
A.~O.~L. Atkin and W.~Li, \emph{Twists of newforms and pseudo-eigenvalues of
  {$W$}-operators}, Invent. Math. \textbf{48} (1978), no.~3, 221--243.

\bibitem{bil}
N.~Billerey, \emph{Crit\`{e}res d'irr\'{e}ductibilit\'{e} pour les
  repr\'{e}sentations des courbes elliptiques}, Int. J. Number Theory
  \textbf{7} (2011), no.~4, 1001--1032.

\bibitem{bos}
S.~Bosch, W.~L\"{u}tkebohmert, and M.~Raynaud, \emph{N\'{e}ron models},
  Springer-{V}erlag, 1990.

\bibitem{bru}
J.~G. Bosman, P.~J. Bruin, A.~Dujella, and F.~Najman, \emph{Ranks of elliptic
  curves with prescribed torsion over number fields}, Int. Math. Res. Not. IMRN
  \textbf{11} (2014), 2885--2923.

\bibitem{bcdt}
C.~Breuil, B.~Conrad, F.~Diamond, and R.~Taylor, \emph{On the modularity of
  elliptic curves over $\mathbb{Q}$: wild $3$-adic exercises}, J. Amer. Math.
  Soc. \textbf{14} (2001), no.~4, 843--939 (electronic).

\bibitem{car86}
H.~Carayol, \emph{Sur les repr\'{e}sentations $l$-adiques associ\'{e}es aux
  formes modulaires de {H}ilbert}, Ann. sci. E.N.S \textbf{19} (1986), no.~3,
  409--468.

\bibitem{car}
\bysame, \emph{Sur les repr\'{e}sentations galoisiennes modulo $l$
  attach\'{e}es aux formes modulaires}, Duke Math. J. \textbf{59} (1989),
  no.~3, 785--801.

\bibitem{coh}
H.~Cohen, \emph{A course in computational algebraic number theory}, Graduate
  Texts in Mathematics, Springer, 1993.

\bibitem{cremona}
J.~Cremona, \emph{Modular symbols for {$\Gamma_1(N)$} and elliptic curves with
  everywhere good reduction}, Math. Proc. Cambridge Philos. Soc. \textbf{111}
  (1992), no.~2, 199--218. \MR{1142740}

\bibitem{creto}
J.~Cremona and T.~Thongjunthug, \emph{The complex {AGM}, periods of elliptic
  curves over $\mathbb{C}$ and complex elliptic logarithms}, J. Number Theory
  \textbf{133} (2010), no.~8, 2813--2841.

\bibitem{darmon}
H.~Darmon, \emph{Wiles' theorem and the arithmetic of elliptic curves}, Modular
  forms and {F}ermat's last theorem ({B}oston, {MA}, 1995), Springer, New York,
  1997, pp.~549--569. \MR{1638495}

\bibitem{del}
P.~Deligne, \emph{Formes modulaires et repr\'esentations {$l$}-adiques},
  S\'eminaire {B}ourbaki. {V}ol. 1968/69: {E}xpos\'es 347--363, Lecture Notes
  in Math., vol. 175, Springer, Berlin, 1971, pp.~Exp.\ No.\ 355, 139--172.
  \MR{3077124}

\bibitem{delser}
P.~Deligne and J.-P. Serre, \emph{Formes modulaires de poids $1$}, Ann. Scient.
  de l'E.N.S. \textbf{7} (1974), no.~4, 507--530.

\bibitem{dis}
F.~Diamond and J.~Shurman, \emph{A {F}irst {C}ourse in {M}odular {F}orms}, GTM,
  vol. 228, Springer, 2005.

\bibitem{dokL}
T.~Dokchitser, \emph{Computing special values of motivic {$L$}-functions},
  Exper. Math. \textbf{13} (2004), no.~2, 137--149.

\bibitem{computeL}
\bysame, \emph{compute{L}},
  \url{http://www.maths.bris.ac.uk/~matyd/computel/index.html}, 2006.

\bibitem{dok}
\bysame, \emph{Notes on the parity conjecture}, Elliptic curves, {H}ilbert
  modular forms and {G}alois deformations, Adv. Courses Math. CRM Barcelona,
  Birkh\"auser/Springer, Basel, 2013, pp.~201--249. \MR{3184338}

\bibitem{drin}
V.G. Drinfel'd, \emph{Two theorems on modular curves}, Funkts. Anal. Prilozh.
  \textbf{7} (1973), 83--84.

\bibitem{edi}
B.~Edixhoven, \emph{On the {M}anin constants of modular elliptic curves},
  Progr. Math. \textbf{89} (1989), 25--39.

\bibitem{edi2}
\bysame, \emph{N\'{e}ron models and tame ramification}, Compositio Math.
  \textbf{81} (1992), no.~3, 291--306.

\bibitem{elk}
N.~D. Elkies, \emph{On elliptic {$K$}-curves}, Modular curves and abelian
  varieties, Progr. Math., vol. 224, Birkh\"auser, Basel, 2004, pp.~81--91.
  \MR{2058644 (2005b:11081)}

\bibitem{ell}
J.~Ellenberg, \emph{{$\Q$}-curves and {G}alois representations}, Modular curves
  and abelian varieties, Progr. Math., vol. 224, Birkh\"auser, Basel, 2004,
  pp.~93--103. \MR{2058645 (2005g:11088)}

\bibitem{gold}
D.~Goldfeld, \emph{On the computational complexity of modular symbols}, Math.
  of Comp. \textbf{58} (1992), no.~198, 807--814.

\bibitem{gola}
J.~Gonz\'{a}lez and J.-C. Lario, \emph{$\mathbb{Q}$-curves and their {M}anin
  ideals}, Amer. J. Math. \textbf{123} (2001), no.~3, 475--503.

\bibitem{gro}
B.~H. Gross, \emph{Kolyvagin's work on modular elliptic curves}, London Math.
  Soc. Lecture Note Ser. \textbf{153} (1991), 235--256.

\bibitem{gro2}
\bysame, \emph{Lectures on the conjecture of {B}irch and {S}winnerton-{D}yer},
  Arithmetic of {$L$}-functions, IAS/Park City Math. Ser., vol.~18, Amer. Math.
  Soc., Providence, RI, 2011, pp.~169--209. \MR{2882691}

\bibitem{groza}
B.~H. Gross and D.~B. Zagier, \emph{Heegner points and derivatives of
  {$L$}-series}, Invent. Math. \textbf{84} (1986), no.~2, 225--320.

\bibitem{guique}
X.~Guitart and J.~Quer, \emph{Modular abelian varieties over number fields},
  Canad. J. Math. \textbf{66} (2014), no.~1, 170--196.

\bibitem{has}
Y.~Hasegawa, \emph{$\mathbb{Q}$-curves over quadratic fields}, Manuscripta
  Math. \textbf{94} (1997), 347--364.

\bibitem{kato}
K.~Kato, \emph{{$p$}-adic {H}odge theory and values of zeta functions of
  modular forms}, Ast\'erisque (2004), no.~295, ix, 117--290, Cohomologies
  $p$-adiques et applications arithm{\'e}tiques. III. \MR{2104361}

\bibitem{kwin}
C.~Khare and J.-P. Wintenberger, \emph{Serre's modularity conjecture. {I}},
  Invent. Math. \textbf{178} (2009), no.~3, 485--504.

\bibitem{kwin2}
\bysame, \emph{Serre's modularity conjecture. {II}}, Invent. Math. \textbf{178}
  (2009), no.~3, 505--586. \MR{2551764}

\bibitem{kol}
V.~Kolyvagin, \emph{Finiteness of {$E(\Q)$} and {$\Sh(E,\Q)$} for a subclass of
  {W}eil curves}, (transl.) Math. USSR-Izv. \textbf{82} (1989), no.~3,
  522--540, 670--671.

\bibitem{lan}
E.~Landau, \emph{Vorlesungen \"{u}ber zahlentheorie i}, S. Hirzel, 1927.

\bibitem{lang}
S.~Lang, \emph{Number theory iii: {D}iophantine geometry}, Encyclopaedia of
  Mathematical Sciences, Springer-{V}erlag, 1991.

\bibitem{lmfdb}
The {LMFDB Collaboration}, \emph{The l-functions and modular forms database},
  \url{http://www.lmfdb.org}, 2013, [Online; accessed 16 September 2013].

\bibitem{man1}
Ju.~I. Manin, \emph{Parabolic points and zeta-functions of modular curves},
  Izv. Akad. Nauk SSSR Ser. Mat. \textbf{36} (1972), 19--66.

\bibitem{maz}
B.~Mazur, \emph{Rational isogenies of prime degree (with an appendix by {D}.
  {G}oldfeld)}, Invent. Math \textbf{44} (1978), no.~2, 129--162.

\bibitem{mil1}
J.~S. Milne, \emph{On the arithmetic of abelian varieties}, Invent. Math.
  \textbf{17} (1972), 177--190.

\bibitem{miy}
T.~Miyake, \emph{Modular forms}, Springer {M}onographs in {M}athematics,
  Springer, 1989.

\bibitem{mur}
M.~R. Murty and V.~K. Murty, \emph{Mean values of derivatives of modular
  {$L$}-series}, Ann. of Math. (2) \textbf{133} (1991), no.~3, 447--475.

\bibitem{que}
J.~Quer, \emph{$\mathbb{Q}$-curves and abelian varieties of
  $\text{GL}_2$-type}, Proc. London Math. Soc. \textbf{81} (2000), 285--317.

\bibitem{rib3}
K.~A. Ribet, \emph{Galois representations attached to eigenforms with
  {N}ebentypus}, Lecture Notes in Math. \textbf{601} (1977), 17--51.

\bibitem{rib2}
\bysame, \emph{Twists of modular forms and endomorphisms of abelian varieties},
  Math. Ann. \textbf{253} (1980), 43--62.

\bibitem{rib1}
\bysame, \emph{Abelian varieties over {$\Q$} and modular forms}, Modular curves
  and abelian varieties, Progr. Math., vol. 224, Birkh\"auser, Basel, 2004,
  pp.~241--261. \MR{2058653 (2005k:11120)}

\bibitem{roh}
D.~Rohrlich, \emph{Modular curves, {H}ecke correspondences and
  {$L$}-functions}, Modular forms and Fermat's last theorem (G.~Cornell,
  J.~Silverman, and G.~Stevens, eds.), Springer-Verlag, 1997, pp.~41--100.

\bibitem{ser}
J.-P. Serre, \emph{Modular forms of weight one and {G}alois representations},
  Algebraic number fields: {$L$}-functions and {G}alois properties ({P}roc.
  {S}ympos., {U}niv. {D}urham, {D}urham, 1975), Academic Press, London, 1977,
  pp.~193--268. \MR{0450201 (56 \#8497)}

\bibitem{serta}
J.-P. Serre and J.~Tate, \emph{Good reduction of abelian varieties}, Ann. of
  Math. \textbf{88} (1968), no.~3, 492--517.

\bibitem{shi}
G.~Shimura, \emph{Introduction to the arithmetic theory of automorphic
  functions}, Publ. Math. Soc. Japan, vol.~11, Iwanami Shoten and Princeton
  University Press, 1971.

\bibitem{shi1}
\bysame, \emph{On the factors of the {J}acobian variety of a modular function
  field}, J. Math. Soc. Japan \textbf{25} (1973), no.~3, 523--544.

\bibitem{sim}
D.~Simon, \emph{Computing the rank of elliptic curves over number fields}, LMS
  J. Comput. Math. \textbf{5} (2002), 7--17 (electronic).

\bibitem{sage}
W.~A. Stein et~al., \emph{{Sage} {M}athematics {S}oftware ({V}ersion 6.5)}, The
  Sage Development Team, 2015, {\tt http://www.sagemath.org}.

\bibitem{stev}
G.~Stevens, \emph{Arithmetic on modular curves}, Progress in Mathematics,
  vol.~20, Birkh\"{a}user, 1982.

\bibitem{tat}
J.~Tate, \emph{On the conjectures of {B}irch and {S}winnerton-{D}yer and a
  geometric analog}, S\'eminaire {B}ourbaki, {V}ol.\ 9, Soc. Math. France,
  Paris, 1995, pp.~Exp.\ No.\ 306, 415--440. \MR{1610977}

\bibitem{wil}
A.~Wiles, \emph{Modular elliptic curves and {F}ermat's last theorem}, Ann. of
  Math. \textbf{141} (1995), no.~3, 443--551.

\bibitem{tawil}
A.~Wiles and R.~Taylor, \emph{Ring-theoretic properties of certain {H}ecke
  algebras}, Ann. of Math. \textbf{141} (1995), no.~3, 553--572.

\bibitem{zhang}
S.-W. Zhang, \emph{Arithmetic of {S}himura curves}, Sci. China Math.
  \textbf{53} (2010), no.~3, 573--592. \MR{2608314}

\end{thebibliography}

\end{document}